 \def\LaTeX{\leavevmode L\raise.42ex
   \hbox{\kern-.3em\size{\sf@size}{0pt}\selectfont A}\kern-.15em\TeX}
\newcommand{\BibTeX}{{\rm B\kern-.05em{\sc
i\kern-.025emb}\kern-.08em\TeX}}
\newtheorem{col}{Corollary}[section]
\newtheorem{thm}{Theorem}[section]
\newtheorem{defn}{Definition}
\newtheorem{rem}[thm]{Remark}
\theoremstyle{defn}
\newtheorem{lem}[thm]{Lemma}
\numberwithin{equation}{section}
\def\EB{{\bf E}}
\def\FB{{\bf F}}
\def\AB{{\bf A}}
\def\HB{{\bf H}}
\begin{document}

\title[To multidimensional Mellin  analysis]{To Multidimensional Mellin  Analysis: Besov spaces, $K$-functor, approximations, frames.}
\author{Isaac Z. Pesenson}
\address{Department of Mathematics, Temple University,
Philadelphia, PA 19122} \email{pesenson@temple.edu}

\keywords{Multidimensioal Mellin analysis,  Sobolev-Mellin, Besov-Mellin, Bernsten-Mellin spaces, K-functor, one-parameter groups  of operators, modulus of continuity, Laplace-Mellin operator, Landau-Kolmogorov-Stein inequality, Paley-Wiener functions, 
direct and inverse approximation theorems, frames.}
  \subjclass{ 43A85, 41A17;}

\begin{abstract}

In the setting of the multidimensional Mellin analysis we introduce moduli of continuity and use them to define Besov-Mellin spaces. We prove that Besov-Mellin spaces are the interpolation spaces (in the sense of J.Peetre) between two Sobolev-Mellin spaces. We also introduce Bernstein-Mellin spaces and prove corresponding direct and inverse approximation theorems. In the Hilbert case we discuss Laplace-Mellin operator and define relevant Paley-Wiener-Mellin spaces. Also in the Hilbert case we describe Besov-Mellin spaces in terms of Hilbert frames.   

\end{abstract}

\dedicatory{Dedicated to 95th birthday of my teacher Paul L.Butzer}

\maketitle

\section{Introduction} 

By one-dimensional Mellin analysis we understand the following setup.
For $p \in  [1, \infty[$, denote by $\| \cdot \|_{p} $ the norm of the Lebesgue space $L ^{p}(\mathbb{R}_{+}).$ In Mellin analysis, the analogue of $L ^{p }(\mathbb{R}_{+} )$ are the spaces $X^{p}(\mathbb{R}_{+})$ comprising all functions $f: \mathbb{R}_{+} \mapsto \mathbb{ C}$  such that $f(\cdot)(\cdot)^{-1/p}\in  L ^{p}(\mathbb{R}_{+}) $ with the norm $\|f\|_{X^{p}(\mathbb{R}_{+}) }:= \| f (\cdot)(\cdot)^{-1/p}\|_{p}$. Furthermore, for $p = \infty$, we define $X ^{\infty}(\mathbb{R}_{+})$ as the space of all measurable functions $f : \mathbb{R}_{+} \mapsto\mathbb{C}$ 
such that $\|f\|_{X ^{\infty}} (\mathbb{R}_{+}):=\sup_{x>0}|f(x)|<\infty$.

In  spaces  $X^{p}(\mathbb{R}_{+})$ we consider the one-parameter $C_{0}$-group of operators 
  $U(t),\>t\in \mathbb{R},$ where
\begin{equation}\label{U}
U(t)f(x)=f(e^{t}x),\>\>\>U(t+\tau)=U(t)U(\tau),
\end{equation}
whose infinitesimal generator is 
\begin{equation}
\frac{d}{dt}U(t)f(x )|_{t=0}=x\frac{d}{dx}f(x)=\Theta f(x).
\end{equation}
The described setting closely related to the Mellin transform 
\begin{equation}\label{mt1}
\mathcal{M}f(s)=\int_{0}^{\infty}f(x)x^{s-1}dx,
\end{equation}
and its inverse
\begin{equation}\label{mt2}
f(x)=\frac{1}{2\pi i}\int_{c-i\infty}^{c+i\infty}\mathcal{M}f(s)x^{-s}ds,\>\>\>s=c+it.
\end{equation}
This pair of transforms has many deep and fascinating connection to many fundamental notions in analysis. For a detailed account of the Mellin Harmonic Analysis, its history, and  connections to other mathematical ideas see the beautiful paper \cite{BJ1}.     A lot of relevant information can also be found in interesting articles \cite{BBO}, \cite{BJ-3}.                    
In particular, in a series of papers  \cite{BBMS-1}-\cite{BBMS-9}, \cite{BJ2}, \cite{Pes22} authors developed in the framework of one-dimensional Mellin analysis analogs of such important topics as Bernstein spaces, Bernstein inequality, Paley-Wiener theorem, Riesz-Boas interpolation formulas, different sampling results including exponential sampling. 

The present paper has two main objectives: 1) to go from one-dimensional setting to multidimensional, and 2)  to concentrate on such important  topics as Besov spaces, $K$-functor, approximations, frames. As well as we know, such topics were never discussed in the setting of the multidimensional Mellin analysis.

The paper is organized as follows. 
In section 2 we describe the basic assumptions and notations of multidimensional Mellin analysis. In section 3 
in the setting of the multidimensional Mellin analysis we introduce moduli of continuity and use them to define Besov-Mellin spaces. In section 4 we prove that Besov-Mellin spaces are the interpolation spaces (in the sense of J.Peetre) between two Sobolev-Mellin spaces.  In section 5 we introduce Bernstein-Mellin spaces and in section 6 prove corresponding direct and inverse approximation theorems including a Jackson-type inequality. Sections 7-9 are devoted to the Hilbert case. We introduce and discuss Laplace-Mellin operator in section 7. In section 8 we define relevant Paley-Wiener spaces and in section 9 consider a partition of unity on the spectrum of the Laplace-Mellin operator. In section {\ref{MoreBes}  we describe Besov-Mellin spaces in terms of approximations by Paley-Wiener functions and in terms of some Hilbert frames.  The article also contains extensions of the Landau-Kolmogorov-Stein inequality to the Mellin analysis setting. Throughout the paper  we extensively use the Peetre's theory of interpolation and approximation spaces. To make the paper self contained some related definitions, facts and proofs are collected in Appendices 1-3. 

All our statements concerning the multidimensional Mellin analysis seems to be new. However, at least some of them are just adaptations and particular realizations of previously obtained (but maybe not very well known)  more general results (see \cite {BB},  \cite{Pes78}-\cite{Pes22}, \cite{T}). In this sense the article can be treated as a review paper. Our main goal was  to attract  attention to multidimensional Mellin analysis, which is currently underdeveloped.

\section{Multidimensional Mellin Analysis}

\subsection{Space $X^{p}=X^{p}(\mathbb{R}_{+}^{n}, d\mu)$. Mellin translations and their infinitesimal operators.}

We set $\mathbb{R}_{+}=\{x\in \mathbb{R},\>x>0\}$ and $\mathbb{R}_{+}^{n}=\{(x_{1}, .., x_{n}), x_{j}\in \mathbb{R}_{+}, \>j=1, .., n\}$, and introduce the measure  $d\mu=  \frac{dx_{1},...dx_{n}}{x_{1}... x_{n}}$. The corresponding space $L^p,\>1\leq p<  \infty$ of complex valued 
functions defined on $\mathbb{R}_{+}^{n}$ is denoted as $X^{p}=X^{p}(\mathbb{R}_{+}^{n}, d\mu)$ and its norm is
$$
\|f\|_{X^{p}}=\left(\int_{\mathbb{R}_{+}^{n}}|f|^{p} d\mu \right)^{1/p},
$$
 for $ 1\leq p<\infty$. We define $X ^{\infty}$ as the space of all measurable functions $f : \mathbb{R}_{+}^{n} \mapsto\mathbb{C}$ 
such that 
$$\|f\|_{X^{\infty}}:=\rm ess \>sup_{x=(x_{1}, ..., x_{n})\in \mathbb{R}_{+}^{n} }|f(x)|<\infty.
$$

In  spaces  $X^{p},\>\>1\leq p<\infty,$ we consider the one-dimensional $C_{0}$-groups of operators 
  $U_{j}(t),\>t\in \mathbb{R},\>j=1, ..., n,$ where
\begin{equation}\label{U}
U_{j}(t)f(x_{1}, .., x_{n})=f(x_{1}, ..., e^{t}x_{j}, ..., x_{n}),
\end{equation}
whose infinitesimal generator is $\>
\Theta_{j}= x_{j}\partial_{j},  \>\>j=1, ..., n,
$
since  
\begin{equation}\label{Dom}
\lim_{t\rightarrow 0}\frac{1}{t}\left(U_{j}(t)f(x_{1}, ..., x_{n})-f(x_{1}, ..., x_{n})\right)=
\frac{d}{dt}U_{j}(t)f(x )|_{t=0}=x_{j}\partial_{j}f(x).
\end{equation}
The general theory (see \cite{BB}, \cite{K}) implies  that  such an operator is closed in $X^{p},\>\>1\leq p<\infty,$. Its domain $\mathcal{D}(\Theta_{j})$ is dense in $X^{p},\>\>1\leq p<\infty,$ and it consists of all functions in $X^{p}$ for which the limit (\ref{Dom}) exists in the  norm of $X^{p},\>\>1\leq p<\infty$.
Domains of the  powers of $\Theta_{j}$ are $\mathcal{D}(\Theta_{j}^{k}),\>\>k\in \mathbb{N}, $  and  $\mathcal{D}^{\infty}(\Theta_{j})=\cap_{k=1}^{\infty}\mathcal{D}(\Theta_{j}^{k})$. We will also use the notation  
$$
\mathcal{D}^{\infty}=\mathcal{D}^{\infty}(\Theta_{1}, ..., \Theta_{n})=\cap_{k\in \mathbb{N}}\cap_{|\alpha|=k} \mathcal{D}\mathcal(\Theta_{1}^{\alpha_{1}} ... \Theta_{n}^{\alpha_{n}}),
$$
where $\alpha=(\alpha_{1}, ..., \alpha_{n}),\>\>\alpha_{j}\in \mathbb{N},\>\>1\leq j\leq n.$

We note that the groups $U_{j}$ commute with each other  
$$
U_{i}(t_{i})U_{j}(t_{j})f=U_{j}(t_{j})U_{i}(t_{i})f,\>\>\>\>f\in X^{p},\>\>1\leq p<\infty,\>\>1\leq i,j\leq n,\>\>t_{i}, t_{j} \geq 0.
$$ 
Also, one can easily verify that $\mathcal{D}^{\infty}$ is invariant under all $U_{j}$ and all $\Theta_{j}$ and
\begin{equation}\label{assumpt}
\Theta_{j}U_{1}(t_{1})U_{2}(t_{2}) ... U_{n}(t_{n})f=U_{1}(t_{1})U_{2}(t_{2}) ... \Theta_{j}U_{j}(t_{j}) ... U_{n}(t_{n}) f,\>\>\>f\in \mathcal{D}^{\infty}.
\end{equation}
One can also check that for the
$$
Uf(t_{1}, t_{2},  ..., t_{n})=U_{1}(t_{1}) U_{2}(t_{2})  ... U_{n}(t_{n})f,\>\>\>(t_{1}, ... , t_{n})\in \mathbb{R}^{n},
$$
the following formula holds
\begin{equation}\label{assumpt-1}
\Theta_{j}Uf(t_{1}, ..., t_{n})=\partial_{j} \left[Uf(t_{1}, ..., t_{n})\right],\>\>\>f\in \mathcal{D}^{\infty}.
\end{equation}

\section{Sobolev-Mellin and Besov-Mellin spaces}\label{S_M_B_M}
\begin{defn}
The Banach space $\mathcal{W}^{k}_{p}=\mathcal{W}^{k}_{p}(\mathbb{R}^{n}_{+}, d\mu),\>\>k\in \mathbb{N},,\>\>1\leq p<\infty,$ which we call the Sobolev-Mellin space,  is the set of functions $f$ in $X^{p}=X^{p}(\mathbb{R}_{+}^{n}, d\mu )$ for which the following norm is finite
$$
\|f\|_{\mathcal{W}^{k}_{p}}=\|f\|_{X^{p}}+\sum_{|\alpha|=k} \|\Theta_{1}^{\alpha_{1}} ... \Theta_{n}^{\alpha_{n}}f\|_{X^{p}},
$$
where $\alpha=(\alpha_{1}, ..., \alpha_{n}),\>|\alpha|=\alpha_{1}+...+\alpha_{n},\>\alpha_{1}, ... ,\alpha_{n}\in \mathbb{N}$.
\end{defn}
By using closeness of the operators $\Theta_{j}$ one can prove the following fact.

\begin{lem}The norm $\|f\|_{\mathcal{W}^{k}_{p}}$ is equivalent to the norm
$$
|||f|||_{\mathcal{W}^{k}_{p}}=\|f\|_{X^{p}}+\sum_{r=1}^{k}\sum_{1\leq j_{1},  ... , j_{r}\leq n}\|\Theta_{j_{1}}...\Theta_{j_{r}}f\|_{X^{p}},
$$
 where $r\in \mathbb{N},\>\> f\in X^{p},\>\>1<p<\infty.$
 \end{lem}
 
One has the following version of the Landau-Kolmogorov-Stein  inequality which follows from a general fact Theorem \ref{KSM-Ap} in Appendix 2.
\begin{thm} The following holds for $f\in \mathcal{D}(\Theta_{j}^{m}),\>\>1\leq p<\infty,$
\begin{equation}
\left\|\Theta_{j}^{k}f\right\|_{X^{p}}^m \leq C_{k,m}\|\Theta_{j}^{m}f\|_{X^{p}}^{k}\|f\|_{X^{p}}^{m-k},\>\>\>
0\leq k \leq m,\label{KSM}
\end{equation}
where $C_{k,m}= (K_{m-k})^m/(K_{m})^{m-k},$ and 
$$
K_{j}=\frac{4}{\pi}\sum_{r=0}^{\infty}\frac{(-1)^{r(j+1)}}{(2r+1)^{j+1}},\>\>\>\>\>\>
j,\>r\in \mathbb{N},
$$
 is a Krein-Favard constant. 
\end{thm}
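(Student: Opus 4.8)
The plan is to deduce the multidimensional Kolmogorov–Stein–Mellin inequality \eqref{KSM} from the abstract one-parameter group result quoted as Theorem~\ref{KSM-Ap} in Appendix~2. The point is that for each fixed index $j$, the family $U_j(t)$, $t\in\mathbb{R}$, is a strongly continuous one-parameter group on the Banach space $X^{p}$ with generator $\Theta_j$, and the abstract Kolmogorov–Stein inequality applies verbatim to any such $(C_0)$-group whose generator $\Theta_j$ has $f$ in the domain of $\Theta_j^{m}$. So the whole statement is, in effect, a one-variable fact applied coordinate-by-coordinate; there is nothing genuinely $n$-dimensional to do here.

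First I would recall that $U_j(t)$ is a $(C_0)$-group of isometries on $X^{p}$: indeed the change of variables $x_j\mapsto e^{t}x_j$ leaves the measure $d\mu = dx_1\cdots dx_n/(x_1\cdots x_n)$ invariant, so $\|U_j(t)f\|_{X^p}=\|f\|_{X^p}$ for all $t$, and strong continuity together with the identification of the generator as $\Theta_j = x_j\partial_j$ was already established around \eqref{Dom}. Thus $\|U_j(t)\|_{X^p\to X^p}=1$ for every $t$, which is exactly the hypothesis under which the Favard constants $K_j$ appear with no extra multiplicative loss. Then I would invoke Theorem~\ref{KSM-Ap}: for a generator $A$ of a bounded $(C_0)$-group with $\sup_t\|e^{tA}\|\le 1$ and for $g$ in the domain of $A^{m}$, one has $\|A^{k}g\|^{m}\le C_{k,m}\|A^{m}g\|^{k}\|g\|^{m-k}$ with $C_{k,m}=(K_{m-k})^{m}/(K_{m})^{m-k}$ for $0\le k\le m$. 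Applying this with $A=\Theta_j$ and $g=f\in\mathcal{D}(\Theta_j^{m})$ gives \eqref{KSM} immediately.

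The only points that need a word of care are the boundary cases $k=0$ and $k=m$, where the inequality degenerates to an identity (and one must read $K_0$ appropriately), and the fact that the constant is genuinely independent of $p$ and of the dimension $n$ — both of which are transparent once one notes that the estimate is obtained on the single coordinate $x_j$ with the other variables as passive parameters, and that the isometry property holds uniformly in $p$. I do not anticipate a real obstacle; the substance of the argument is entirely contained in the abstract Appendix~2 statement, and the present theorem is simply its realization in the Mellin setting. If one wished to avoid citing the abstract result, one could instead reproduce the classical Stein argument: represent $\Theta_j^{k}f$ via a suitable combination of the group orbit $t\mapsto U_j(t)f$ using the extremal (Favard) trigonometric interpolation, but that is precisely the proof of Theorem~\ref{KSM-Ap} and need not be repeated here.
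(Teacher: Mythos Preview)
Your proposal is correct and matches the paper's approach exactly: the paper simply states that the theorem follows from the abstract one-parameter result Theorem~\ref{KSM-Ap} in Appendix~2, and you have supplied precisely the missing verification that $U_{j}(t)$ is a $C_{0}$-group of isometries on $X^{p}$ (via invariance of $d\mu$ under $x_{j}\mapsto e^{t}x_{j}$) so that the abstract lemma applies with $D=\Theta_{j}$. There is nothing to add.
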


 The mixed modulus of continuity is introduced as
\begin{equation}
\Omega^{r}_{p}( s, f)=
$$
$$
\sum_{1\leq j_{1},...,j_{r}\leq
n}\sup_{0\leq\tau_{j_{1}}\leq s}...\sup_{0\leq\tau_{j_{r}}\leq
s}\|
\left(U_{j_{1}}(\tau_{j_{1}})-I\right)...\left(U_{j_{r}}(\tau_{j_{r}})-I\right)f\|_{X^{p}},\label{M}
\end{equation}
where $f\in X^{p},\ r\in \mathbb{N},  $ and $I$ is the
identity operator in $X^{p},\>\>1\leq p<\infty.$

\begin{defn}
For $\alpha\in \mathbb{R}_{+} ,\>\>1\leq p<\infty, \>\>1\leq q\leq \infty,$ and $\alpha$ is not integer the Besov-Mellin space $\mathcal{B}^{\alpha}_{p,q}$ is defined as the subspace in $X^{p}$ of all the functions for which the following norm is finite
\begin{equation}\label{Bnorm1}
\|f\|_{\mathcal{W}^{[\alpha]}_{p}}+\sum_{1\leq j_{1},...,j_{[\alpha] }\leq n}
\left(\int_{0}^{\infty}\left(s^{[\alpha]-\alpha}\Omega^{1}_{p}
(s,\Theta_{j_{1}}... \Theta_{j_{[\alpha]}}f)\right)^{q}\frac{ds}{s}\right)^{1/q},
\end{equation}
where $[\alpha]$ is the integer part of $\alpha$.

In the case when 
$\alpha=k\in \mathbb{N}$ is an integer  the Besov-Mellin space $\mathcal{B}^{\alpha}_{p,q},\>\>1\leq p<\infty, \>\>1\leq q\leq \infty,$ is defined as the subspace in $X^{p}$ of all the functions for which the following norm is finite
 (Zygmund condition)
\begin{equation}\label{Bnorm2}
\|f\|_{\mathcal{W}^{k-1}_{p}}+ \sum_{1\leq j_{1}, ... ,j_{k-1}\leq n }
\left(\int_{0}^{\infty}\left(s^{-1}\Omega^{2}_{p}(s,
\Theta_{j_{1}}...\Theta_{j_{k-1}}f)\right)
 ^{q}\frac{ds}{s}\right)^{1/q}.
\end{equation}
\end{defn}

\section{The interpolation theorem for the Sobolev-Mellin  and Besov-Mellin spaces}\label{Interp}

It is well known that the most important  property  of the  classical  Besov spaces  is the fact that they are interpolation spaces (see section Appendix 1 below) between two Sobolev spaces \cite{BB}, \cite{KPS}. Here we formulate this result for Besov-Mellin spaces.

For the pair of Banach spaces $(X^{p}, \mathcal{W}_{p}^{r}),,\>\>1\leq p<\infty, \>\>$  the $K$-functor is defined by the formula 
$$
K(s^{r}, f,  X^{p}, \mathcal{W}^{r}_{p})=\inf_{f=f_{0}+f_{1},f_{0}\in X^{p},
f_{1}\in \mathcal{W}_{p}^{r}}\left(\|f_{0}\|_{X^{p}}+s^{r}\|f_{1}\|_{\mathcal{W}_{p}^{r}}\right).\label{K}
$$

The results below  are the particular case of the results in \cite{Pes79}, \cite{Pes19} (see also \cite{KP}, \cite{Pes78}-\cite{Pes83}, \cite{Pes88b}) which correspond to a situation with commuting one-parameter semigroups.

We consider  the so-called Hardy-Steklov-Mellin  operator $\mathcal{P}_{r}(s)$  which is defined on $X^{p},\>\>1\leq p<\infty,$ by the formula 
$$
\mathcal{P}_{r}(s)=\mathcal{P}_{1,r}(s)\mathcal{P}_{2,r}(s) ... \mathcal{P}_{n,r}(s)f,\>\>\>f\in X^{p}, \>\>1\leq p<\infty,
$$
where
$$
\mathcal{P}_{j.r}(s)=(s/r)^{-r}\int_{0}^{s/r}...\int_{0}^{s/r}\sum
_{k=1}^{r}(-1)^{k}
C^{k}_{r}U_{j}\left( k(\tau_{j,1}+...+\tau_{j,r})\right)fd\tau_{j,1}...d\tau_{j,r},
$$
and  $C^{k}_{r}$ are the binomial coefficients. 
As it is  shown in   \cite{Pes79}, \cite{Pes19}, 
$$
\mathcal{P}_{r}(s): X^{p} \mapsto \mathcal{W}_{p}^{r},\>\>1\leq p<\infty,\>r\in \mathbb{N}, \>s\in \mathbb{R}.
$$
Moreover,  there exist constants $c>0,\>C>0$, such that for all $f\in X^{p},\>\>s\geq 0, \>\>1\leq p<\infty, \>\>$
\begin{equation}\label{main-ineq}
c \>\Omega^{r}_{p}(s, f)\leq K(s^{r}, f, X^{p}, \>\mathcal{W}^{r}_{p})\leq C \left( \Omega^{r}_{p}(s, f)+\min(s^{r}, 1)\|f\|_{X^{p}} \right).
\end{equation}
This inequality implies density of $\mathcal{W}^{r}_{p},\>\>1\leq p<\infty, \>$ in $X^{p}$.

\begin{defn}The interpolation space $\left(X^{p}, \>\mathcal{W}^{r}_{p}\right)^{K}_{\alpha/r, q},             \ 0<\alpha<r\in \mathbb{N},\ 1\leq p<\infty,\>1\leq  q\leq \infty,$ is defined as the space of all $f\in X^{p}$ for which the following norm is finite
\begin{equation}\label{B-Int}
\|f\|_{\alpha/r, q}^{K}=\|f\|_{X^{p}}+\left (\int_{0}^{\infty}\left(s^{\alpha}   K(s^{r}, f, X^{p}, \>\mathcal{W}^{r}_{p})\right)^{q}\frac{ds}{s}                \right)^{1/q}, \>\>\> 1\leq q<\infty,
\end{equation}
with the usual modifications for $q=\infty$.
\end{defn}

\begin{thm}\label{Main} The following holds true.

\begin{enumerate}

\item
 The Besov-Mellin  space $\mathcal{B}^{\alpha}_{p q}$ coincides with the interpolation space $\left(X^{p}, \>\mathcal{W}^{r}_{p}\right)^{K}_{\alpha/r,q},             \ 0<\alpha<r\in \mathbb{N},\ 1\leq p<\infty,\>1\leq  q\leq \infty,$ and the norms (\ref{Bnorm1}), (\ref{Bnorm2}), (\ref{B-Int}) are equivalent (for appropriate choice of indices) to the  norm
\begin{equation}\label{Bnorm3}
\|f\|_{X^{p}}+\left(\int_{0}^{\infty}(s^{-\alpha}\Omega^{r}_{p}(s,
f))^{q} \frac{ds}{s}\right)^{1/q} , \>\>\>1\leq q<\infty,
\end{equation}
with the usual modifications for $q=\infty$.

\item The following isomorphism holds true
$$
\left(X^{p}, \>\mathcal{W}^{r}_{p}\right)^{K}_{\alpha/r,q}=\left(\mathcal{W}^{k_{1}}_{p},\>\mathcal{W}^{k_{2}}_{p}\right)^{K}_{(\alpha-k_{1})/(k_{2}-k_{1}),q},
$$
where $0\leq k_{1}<\alpha<k_{2}\leq r\in \mathbb{N},\> ,\>\>1\leq p<\infty, \>\>1\leq  q\leq \infty.$

\end{enumerate}

\end{thm}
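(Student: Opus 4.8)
The plan is to derive Theorem \ref{Main} from the general machinery of Peetre's $K$-functor together with the key estimate (\ref{main-ineq}) and the approximation-theoretic results already assembled in the Appendices. I would first recall the equivalence theorem for the $K$-functor: for a pair $(X, Y)$ with $Y \hookrightarrow X$ densely, the interpolation norm $\|f\|_{(X,Y)^{K}_{\theta,q}} = \left(\int_{0}^{\infty}(t^{-\theta} K(t, f, X, Y))^{q}\,\frac{dt}{t}\right)^{1/q}$ (with the sup modification for $q=\infty$) admits a reparametrization: substituting $t = s^{r}$ in the defining integral and using $\theta = \alpha/r$ shows that $\|f\|_{(X^{p},\mathcal{W}^{r}_{p})^{K}_{\alpha/r,q}}$ is equivalent (up to the constant $r^{1/q}$) to $\left(\int_{0}^{\infty}\left(s^{-\alpha}K(s^{r}, f, X^{p}, \mathcal{W}^{r}_{p})\right)^{q}\frac{ds}{s}\right)^{1/q}$. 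This immediately identifies the second of the two candidate norms in part (1) with the interpolation-space norm, once we also add $\|f\|_{X^{p}}$ which is harmless since $K(s^{r},f,\cdot,\cdot) \leq \|f\|_{X^{p}}$ controls the behavior near $s=0$ and the $\min(s^{r},1)$ factor in (\ref{main-ineq}) is $O(1)$.

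Next I would pass from the $K$-functor norm to the modulus-of-continuity norm (\ref{Bnorm3}) using the sandwich inequality (\ref{main-ineq}). The left inequality $c\,\Omega^{r}_{p}(s,f) \leq K(s^{r}, f, X^{p}, \mathcal{W}^{r}_{p})$ gives one direction directly after raising to the $q$-th power and integrating against $ds/s$. For the reverse direction, the right inequality bounds $K(s^{r},f,\cdot,\cdot)$ by $C(\Omega^{r}_{p}(s,f) + \min(s^{r},1)\|f\|_{X^{p}})$; the contribution of the second term, $\left(\int_{0}^{\infty}(s^{-\alpha}\min(s^{r},1)\|f\|_{X^{p}})^{q}\frac{ds}{s}\right)^{1/q}$, is a finite multiple of $\|f\|_{X^{p}}$ precisely because $0<\alpha<r$ makes both $\int_{0}^{1}s^{(r-\alpha)q}\frac{ds}{s}$ and $\int_{1}^{\infty}s^{-\alpha q}\frac{ds}{s}$ converge. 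So (\ref{Bnorm3}) and the $K$-functor norm are equivalent, and both equal the $(X^{p},\mathcal{W}^{r}_{p})^{K}_{\alpha/r,q}$ norm.

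The remaining and genuinely substantive part of (1) is to show that the intrinsic Besov-Mellin norm (\ref{Bnorm1})--(\ref{Bnorm2}), phrased in terms of $\Omega^{1}_{p}$ (resp. $\Omega^{2}_{p}$) applied to the derivatives $\Theta_{j_{1}}\cdots\Theta_{j_{[\alpha]}}f$ plus the Sobolev-Mellin norm $\|f\|_{\mathcal{W}^{[\alpha]}_{p}}$, is equivalent to (\ref{Bnorm3}) with the higher-order modulus $\Omega^{r}_{p}$ of $f$ itself. This is the classical ``reduction of smoothness by differentiation'' argument, and I expect it to be the main obstacle. One direction uses the commutation relations (\ref{assumpt}) and the fact that $U_{j}(\tau)-I$ applied after differentiation can be compared to the $r$-th order difference of $f$; the other direction needs a Marchaud-type inequality — controlling $\Omega^{r-1}_{p}(s, \Theta_{j} f)$ by an integral of $\Omega^{r}_{p}(u,f)/u$ plus a lower-order term — which in this abstract one-parameter-group setting follows from the Kolmogorov--Stein--Mellin inequality (\ref{KSM}) combined with the $K$-functor characterization. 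Since the paper states these facts are particular cases of results in \cite{Pes79}, \cite{Pes19}, I would invoke those, checking only that the commuting-groups hypothesis (\ref{assumpt}), (\ref{assumpt-1}) is met so that the mixed modulus behaves as in the scalar theory.

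Finally, part (2) is pure abstract interpolation: the reiteration (stability) theorem for the real ($K$-) method. Having shown in part (1) that $\mathcal{B}^{\alpha}_{p,q} = (X^{p},\mathcal{W}^{r}_{p})^{K}_{\alpha/r,q}$ for every integer $r>\alpha$, and noting that each $\mathcal{W}^{k}_{p}$ is itself an exact interpolation space of the scale (indeed $\mathcal{W}^{k}_{p} = (X^{p},\mathcal{W}^{r}_{p})^{K}_{k/r,1}$ in the appropriate sense, or more simply that the family $\{\mathcal{W}^{k}_{p}\}$ is a ``quasi-power'' scale), the reiteration theorem gives $(\mathcal{W}^{k_{1}}_{p},\mathcal{W}^{k_{2}}_{p})^{K}_{\eta,q} = (X^{p},\mathcal{W}^{r}_{p})^{K}_{\theta,q}$ with $\theta = \frac{k_{1}}{r}(1-\eta) + \frac{k_{2}}{r}\eta$; choosing $\eta = (\alpha-k_{1})/(k_{2}-k_{1})$ makes $\theta = \alpha/r$, which is exactly the claimed isomorphism. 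I would present this by citing the reiteration theorem from Appendix 1 and doing the one-line bookkeeping of the parameters; the only care needed is the boundary cases $k_{1}=0$ (where $\mathcal{W}^{0}_{p}=X^{p}$) and $k_{2}=r$, both of which are covered by the standard formulation.
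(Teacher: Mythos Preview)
Your proposal is correct and in fact supplies considerably more argument than the paper does: the paper gives no proof of Theorem~\ref{Main} at all, simply declaring it (together with the key inequality~(\ref{main-ineq})) to be a particular case of the author's earlier work \cite{Pes79}, \cite{Pes19} for commuting one-parameter semigroups. Your outline---reparametrize $t=s^{r}$ to identify the $K$-norm, sandwich with (\ref{main-ineq}) to pass to the modulus $\Omega^{r}_{p}$, invoke the Marchaud/Kolmogorov--Stein machinery for the equivalence with (\ref{Bnorm1})--(\ref{Bnorm2}), and apply reiteration for part~(2)---is exactly the standard route and is what those references contain.

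Two small corrections. First, Appendix~1 of this paper does \emph{not} state the reiteration theorem; it only records the definitions of $K$-functor, interpolation spaces, approximation spaces, and Theorems~\ref{Direct}--\ref{Inverse}. So when you write ``citing the reiteration theorem from Appendix~1'' you would instead have to cite \cite{BL}, \cite{BB}, or \cite{T} directly. Second, your parenthetical claim that $\mathcal{W}^{k}_{p}=(X^{p},\mathcal{W}^{r}_{p})^{K}_{k/r,1}$ is not what reiteration needs and is generally false for integer $k$; what you actually require (and what you correctly hedge toward) is that each $\mathcal{W}^{k}_{p}$ is of class $\mathcal{J}_{k/r}\cap\mathcal{K}_{k/r}$ relative to the pair $(X^{p},\mathcal{W}^{r}_{p})$, which follows from the Kolmogorov--Stein--Mellin inequality~(\ref{KSM}) and the trivial embedding. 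With those two adjustments your argument is complete.
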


 \bigskip

\begin{rem}
The results of the last two sections didn't use the fact that the operators $\Theta_{j}$ commute with each other. However, by using commutativity of $\Theta_j$ one can show (see \cite{T}) that the norm of $\mathcal{W}_{p}^{r}$ is equivalent to the norm
$$
\|f\|_{X^{p}}+\sum_{j=1}^{n}\|\Theta_{j}f\|_{X^{p}},
$$
and the norm (\ref{Bnorm3}) is equivalent to the norm
$$
\|f\|_{X^{p}}+\sum_{j=1}^{n}\left( \int_{0}^{\infty}\left(s^{-\alpha}w_{j,p}^{r}\left(s,f\right)\right)^{q}\right)^{1/q}, \>\>1\leq q<\infty,
$$
where 
$$
\omega_{j,p}^{m}\left( s, f\right)=\sup_{0\leq \tau\leq s}\|\left(U_{j}(\tau)-I\right)^{m}f\|_{X^{p}}.
$$
\end{rem}

\section{ Bernstein-Mellin spaces}

Let's remind that in the classical analysis a  Bernstein class \cite{A}, \cite{N}, which is denoted as ${\bf B}_{\sigma}^{p}(\mathbb{R}),\>\> \sigma\geq 0, \>\>1\leq p\leq  \infty,$ is a linear space of all functions $f:\mathbb{R} \mapsto \mathbb{C}$ which belong to $L^{p}(\mathbb{R})$ and admit  extension to $\mathbb{C}$ as entire functions of exponential type $\sigma$. It is known,  that for every such function    there exists a constant $A>0$ such the following inequality holds
$$
|f(x+iy)|\leq Ae^{\sigma|y|}.
$$
A function $f$ belongs to ${\bf B}_{\sigma}^{p}(\mathbb{R})$ if and only if the following Bernstein inequality holds 
$$
\|f^{(k)}\|_{L^{p}(\mathbb{R})}\leq \sigma^{k}\|f\|_{L^{p}(\mathbb{R})}
$$
for all natural $k$. Using the distributional Fourier transform 
$$
\widehat{f}(\xi)=\frac{1}{\sqrt{2\pi}}  \int_{\mathbb{R}} f(x)e^{-i\xi x}dx, \>\>\>f\in L^{p}(\mathbb{R}), \>\>1\leq p\leq  \infty,
$$
one can show (Paley-Wiener theorem) that $f\in {\bf B}_{\sigma}^{p}(\mathbb{R}), \>\>1\leq p\leq \infty,$ if and only if $f\in L^{p}(\mathbb{R}),\>\>1\leq p \leq\infty,$ and the support of $\widehat{f}$ (in sens of distributions) is in $[-\sigma, \sigma]$.

\begin{defn}
The Bernstein-Mellin space 
 ${\bf B}_{ \sigma}^{p}(\Theta_{1}, ..., \Theta_{n}), \>\>\sigma>0,\>1\leq p< \infty,$ is defined as  a set of all functions $f$ in $X^{p}$ 
 which belong to $\mathcal{D}^{\infty}(\Theta_{1}, ..., \Theta_{n})$  and for which 
\begin{equation}\label{Bernstein1}
\|\Theta_{j_{1}}^{l_{1}} ... \Theta_{j_{k}}^{l_{k}}f\|_{X^{p}}\leq \sigma^{|l|}\|f\|_{X^{p}}, 
\end{equation}
where $ \>l=(l_{j_{1}}, ..., l_{j_{k}}),\>|l|=l_{j_{1}}+...+l_{j_{k}},\>l_{j}\in \mathbb{N}\cup\{0\},\>k\in \mathbb{N}.$

The notation ${\bf B}_{ \sigma}^{p}(\Theta_{j}),\>1\leq p< \infty,$ will be used for the set of all functions $f$ in $X^{p}$ 
 which belong to $\mathcal{D}^{\infty}(\Theta_{j})$  and for which 
\begin{equation}\label{Bernstein}
\|\Theta_{j}^{l}f\|_{X^{p}}\leq \sigma^{l}\|f\|_{X^p}.
\end{equation}
\end{defn}

The next Lemma follows from a more general result Lemma \ref{BernstCriterion-Ap} in Appendix 2.

\begin{lem}\label{BernstCriterion}

A function $f\in \mathcal{D}^{\infty}$ belongs to  ${\bf B}_{\sigma}^{p}(\Theta_{j}), \>1\leq j\leq n,>\>\sigma> 0 ,\>1\leq p< \infty,$ if and only if
 the quantity
\begin{equation}\label{Lem}
\sup_{k\in N} \sigma ^{-k}\|\Theta_{j}^{k}f\|_{X^{p}(\mathbb{R}_{+})}=R(f,\sigma)
\end{equation}
 is finite.

\end{lem}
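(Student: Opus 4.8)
The statement is a Bernstein-type characterization: membership of $f\in\mathcal{D}^\infty$ in ${\bf B}_\sigma^p(\Theta_j)$ is equivalent to the finiteness of $R(f,\sigma)=\sup_{k\in\mathbb{N}}\sigma^{-k}\|\Theta_j^k f\|_{X^p}$. One direction is trivial: if $f\in{\bf B}_\sigma^p(\Theta_j)$, then by the defining inequality (\ref{Bernstein}) we have $\sigma^{-k}\|\Theta_j^k f\|_{X^p}\le \|f\|_{X^p}$ for every $k$, so $R(f,\sigma)\le\|f\|_{X^p}<\infty$. The content is the converse, and since the Lemma is stated as a consequence of the more general Lemma \ref{BernstCriterion-Ap} in Appendix 2, the plan is to verify that the abstract hypotheses of that general lemma are met in the present concrete setting and then invoke it.

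First I would isolate the operator $\Theta_j$ together with the single one-parameter $C_0$-group $U_j(t)$ it generates on $X^p$; by the discussion in Section 2 this is a closed, densely defined generator, and $\mathcal{D}^\infty(\Theta_j)=\cap_k\mathcal{D}(\Theta_j^k)$ is a dense invariant core on which all the needed manipulations are legitimate. The general Bernstein criterion of Appendix 2 applies to a closed operator generating a bounded (here, isometric, since $U_j(t)$ is a change of variables preserving the measure $d\mu$) $C_0$-group, so the hypotheses are satisfied. The heart of the argument, which the appendix lemma encapsulates, is the following: from finiteness of $R(f,\sigma)$ one deduces that the orbit $t\mapsto U_j(t)f$ extends to an entire $X^p$-valued function of exponential type $\sigma$, because its Taylor coefficients at $0$ are $\Theta_j^k f/k!$ and the bound $\|\Theta_j^k f\|_{X^p}\le R(f,\sigma)\,\sigma^k$ forces the power series $\sum_k \frac{z^k}{k!}\Theta_j^k f$ to converge for all $z\in\mathbb{C}$ with the exponential-type estimate $\|\sum_k \frac{z^k}{k!}\Theta_j^k f\|_{X^p}\le R(f,\sigma)e^{\sigma|z|}$. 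A vector-valued Phragmén–Lindelöf / Bernstein argument (or, via a functional applied to the orbit, a scalar Paley–Wiener argument) then yields the full family of Bernstein inequalities $\|\Theta_j^l f\|_{X^p}\le\sigma^l\|f\|_{X^p}$ with the sharp constant $\|f\|_{X^p}$ rather than $R(f,\sigma)$, which is exactly (\ref{Bernstein}).

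Concretely, the steps in order are: (i) observe that $U_j(t)$ is an isometric $C_0$-group on $X^p$ and $\Theta_j$ its generator, with $\mathcal{D}^\infty(\Theta_j)$ a dense core; (ii) check that the hypotheses of Lemma \ref{BernstCriterion-Ap} (closed generator of a bounded group, finiteness of the analogue of $R(f,\sigma)$) hold verbatim for $(\Theta_j,U_j)$; (iii) apply Lemma \ref{BernstCriterion-Ap} to conclude the Bernstein inequalities, hence $f\in{\bf B}_\sigma^p(\Theta_j)$; (iv) note the converse implication is immediate from the definition. The main obstacle is entirely contained in step (iii), namely the passage from the single growth bound on $\sigma^{-k}\|\Theta_j^k f\|$ to the optimal-constant Bernstein inequalities for all powers — this is the genuine analytic input and is precisely what Appendix 2 supplies; in the body of the paper nothing beyond the routine verification that the Mellin setting is a special case of that abstract framework is required.
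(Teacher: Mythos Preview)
Your proposal is correct and matches the paper's approach exactly: the paper states that Lemma~\ref{BernstCriterion} follows from the abstract Lemma~\ref{BernstCriterion-Ap} in Appendix~2, and your steps (i)--(iv) carry out precisely this reduction, verifying that $U_j(t)$ is an isometric $C_0$-group on $X^p$ with generator $\Theta_j$ so that the abstract criterion applies. Your additional sketch of why the abstract lemma itself holds (entire extension of the orbit via the Taylor series and a vector-valued Bernstein/Phragm\'en--Lindel\"of argument) goes beyond what the paper provides, since Appendix~2 merely states Lemma~\ref{BernstCriterion-Ap} and cites \cite{Pes00}, \cite{Pes15} for its proof.
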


\begin{thm}\label{MultiBernstCriterion}

A function $f\in \mathcal{D}^{\infty}$ belongs to  ${\bf B}_{\sigma}^{p}(\Theta_{1}, ...,\Theta_{n}), \>\>\sigma> 0 ,\>1\leq p< \infty,$ if and only if
 the quantity
\begin{equation}\label{Lem-0}
\sup_{|l|\in \mathbb{N}} \>\>\sup_{l=(l_{j_{1}}, ..., l_{j_{k}})}\sigma ^{-|l|}\|\Theta_{j_{1}}^{l_{j_{1}}} ... \Theta_{j_{k}}^{l_{j_{k}}}f\|_{X^{p}}=R(f,\sigma),\>\>\>|l|=l_{j_{1}}+ ... +l_{j_{k}},
\end{equation}
 is finite.
\end{thm}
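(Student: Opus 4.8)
The plan is to reduce Theorem \ref{MultiBernstCriterion} to the one-dimensional criterion in Lemma \ref{BernstCriterion} applied successively in each coordinate direction. The "only if" direction is immediate: if $f\in {\bf B}_{\sigma}^{p}(\Theta_{1},\dots,\Theta_{n})$, then by definition \eqref{Bernstein} we have $\|\Theta_{j_{1}}^{l_{j_{1}}}\cdots\Theta_{j_{k}}^{l_{j_{k}}}f\|_{X^{p}}\leq\sigma^{|l|}\|f\|_{X^{p}}$ for every multi-index, so the supremum in \eqref{Lem-0} is bounded by $\|f\|_{X^{p}}$ and in particular finite. So the content is the "if" direction: assuming the supremum $R(f,\sigma)$ in \eqref{Lem-0} is finite, we must recover the Bernstein inequalities \eqref{Bernstein}, i.e. with the sharp constant $\sigma^{|l|}$ and not merely $C\sigma^{|l|}$.

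The key mechanism is the following. Fix a direction $j$ and look at $g=\Theta_{j_{2}}^{l_{j_{2}}}\cdots\Theta_{j_{k}}^{l_{j_{k}}}f$ (the operators commute, as recorded in \eqref{assumpt}, so the order of the remaining factors is irrelevant). Finiteness of $R(f,\sigma)$ gives $\sup_{m}\sigma^{-m}\|\Theta_{j_{1}}^{m}g\|_{X^{p}}<\infty$ — this uses that the sup in \eqref{Lem-0} runs over all multi-indices, in particular over those supported only on extra powers in the $j_{1}$-slot on top of the fixed tail. Hence by Lemma \ref{BernstCriterion}, $g\in {\bf B}_{\sigma}^{p}(\Theta_{j_{1}})$, so $\|\Theta_{j_{1}}^{l}g\|_{X^{p}}\leq\sigma^{l}\|g\|_{X^{p}}$ for all $l$. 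Iterating this observation over the directions $j_{1},\dots,j_{k}$ one at a time — peeling off one block of powers at each stage and applying the one-dimensional Bernstein inequality to that block — yields
\begin{equation*}
\|\Theta_{j_{1}}^{l_{j_{1}}}\cdots\Theta_{j_{k}}^{l_{j_{k}}}f\|_{X^{p}}\leq\sigma^{l_{j_{1}}}\|\Theta_{j_{2}}^{l_{j_{2}}}\cdots\Theta_{j_{k}}^{l_{j_{k}}}f\|_{X^{p}}\leq\cdots\leq\sigma^{l_{j_{1}}+\cdots+l_{j_{k}}}\|f\|_{X^{p}}=\sigma^{|l|}\|f\|_{X^{p}},
\end{equation*}
which is exactly \eqref{Bernstein}, so $f\in {\bf B}_{\sigma}^{p}(\Theta_{1},\dots,\Theta_{n})$.

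The point requiring care — and what I expect to be the main obstacle — is the bookkeeping in the induction: at the intermediate step one applies Lemma \ref{BernstCriterion} to $g=\Theta_{j_{2}}^{l_{j_{2}}}\cdots\Theta_{j_{k}}^{l_{j_{k}}}f$, and one must be sure that $g\in\mathcal{D}^{\infty}(\Theta_{j_{1}})$ (so that the lemma applies) and that the relevant one-dimensional sup $\sup_{m}\sigma^{-m}\|\Theta_{j_{1}}^{m}g\|_{X^{p}}$ is finite. Both follow from $f\in\mathcal{D}^{\infty}$ together with the invariance of $\mathcal{D}^{\infty}$ under all $\Theta_{j}$ (noted after \eqref{assumpt}) and from finiteness of the single global constant $R(f,\sigma)$, since $\sigma^{-m}\|\Theta_{j_{1}}^{m}g\|_{X^{p}}=\sigma^{-(m+l_{j_{2}}+\cdots+l_{j_{k}})}\|\Theta_{j_{1}}^{m}\Theta_{j_{2}}^{l_{j_{2}}}\cdots\Theta_{j_{k}}^{l_{j_{k}}}f\|_{X^{p}}\cdot\sigma^{l_{j_{2}}+\cdots+l_{j_{k}}}\leq R(f,\sigma)\,\sigma^{l_{j_{2}}+\cdots+l_{j_{k}}}<\infty$. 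Once these finiteness and domain conditions are checked, the telescoping argument is purely formal. One should also remark that the sharp constant (no loss of a multiplicative factor) is obtained because Lemma \ref{BernstCriterion}, despite being phrased via a finite supremum, delivers membership in ${\bf B}_{\sigma}^{p}(\Theta_{j})$ which carries the exact inequality $\|\Theta_{j}^{l}g\|_{X^{p}}\leq\sigma^{l}\|g\|_{X^{p}}$; no constant is accumulated across the $k$ iterations.
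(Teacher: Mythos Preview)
Your proposal is correct and follows essentially the same approach as the paper: both argue the ``if'' direction by iteratively applying the one-dimensional Lemma \ref{BernstCriterion} to peel off one $\Theta_{j}$-block at a time, using commutativity and the finiteness of $R(f,\sigma)$ to verify the hypothesis at each step. Your version is somewhat more explicit about the domain and finiteness checks than the paper's, but the underlying argument is the same.
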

\begin{proof} It is clear that if $f\in {\bf B}_{\sigma}^{p}(\Theta_{1}, ... ,\Theta_{n})$ then (\ref{Lem-0}) holds.
 Now, we assume that (\ref{Lem-0}) holds and we have to verify the inequality (\ref{Bernstein1}) with $k=n$. The inequality 
(\ref{Lem-0}) and the Lemma \ref{BernstCriterion} imply that for every $1\leq j\leq n$ the function $f$ belongs to $ {\bf B}_{\sigma}^{p}(\Theta_{j})$, i.e.
$$
\|\Theta_{j}^{\alpha}f\|_{X^{p}}\leq \sigma^{\alpha}\|f\|_{X^{p}},\>\alpha\in \mathbb{N}.
$$
 
 We proceed by induction assuming  that for some $1\leq k<n$ the inequality  (\ref{Lem-0}) implies (\ref{Bernstein}). Clearly, (\ref{Lem-0})  implies that any $\Theta_{j_{1}}^{\alpha_{1}} ... \Theta_{j_{k}}^{\alpha_{k}}f$  satisfies (\ref{Bernstein}) for any $\Theta_{j}$ and Lemma \ref{BernstCriterion} implies the inequality
 $$
 \|\Theta^{\alpha_{j_{k+1}}}_{j_{k+1}}\Theta_{j_{1}}^{\alpha_{1}} ... \Theta_{j_{k}}^{\alpha_{k}}f\|_{X^{p}}\leq \sigma^{|\alpha_{j_{k+1}}|}\|\Theta_{j_{1}}^{\alpha_{1}} ... \Theta_{j_{k}}^{\alpha_{k}}f\|_{X^{p}}\leq \sigma^{ \left( |\alpha_{j_{k+1}}|+ ... + |\alpha_{j_{k}}|\right)}\|f\|_{X^{p}}.
 $$
 
Theorem is proven.

\end{proof}

For a function $f\in \cup_{\sigma>0}\mathbf{B}_{\sigma}^{p}(\Theta_{j}),$ the $\sigma_{j ; f}$ will denote the smallest $\sigma$ for which the inequality ({\ref{Bernstein}) holds.

The next statement follows from the corresponding Theorem \ref{new-Ap}  in our Appendix 2.

\begin{thm}\label{new}
If a function $f$ belongs   to the space  $ \cup_{\sigma>0}\mathbf{B}_{\sigma}^{p}(\Theta_{j}),  $ then the following limit exists
 \begin{equation}
 d_{j; f}=\lim_{k\rightarrow \infty} \|\Theta_{j}^k
f\|^{1/k}_{X^{p}}, \label{limit}
\end{equation}
and $d_{j,f}=\sigma_{j,f}.$ 
Conversely, if
$f\in \mathcal{D}^{\infty}(\Theta_{j})$ and $d_{j,f}=\lim_{k\rightarrow \infty}
\|\Theta_{j}^k f\|^{1/k}_{X^{p}},$ exists and is finite, then $f\in\mathbf{B}^{p}_{d_{j,f}}(\Theta_{j})$
and $d_{j,f}=\sigma_{j,f }.$
\end{thm}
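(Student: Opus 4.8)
\textbf{Proof plan for Theorem \ref{new}.}
The strategy is to deduce everything from the one-dimensional Bernstein criterion Lemma \ref{BernstCriterion} together with a standard limit–superior argument, so that the crux is showing that the sequence $\|\Theta_j^k f\|_{X^p}^{1/k}$ converges and identifies the sharp exponential type. First, suppose $f\in\mathbf{B}_\sigma^p(\Theta_j)$ for some finite $\sigma$. Then $\|\Theta_j^k f\|_{X^p}\le\sigma^k\|f\|_{X^p}$, so $\limsup_{k\to\infty}\|\Theta_j^k f\|_{X^p}^{1/k}\le\sigma$; taking the infimum over admissible $\sigma$ gives $\limsup_k\|\Theta_j^k f\|_{X^p}^{1/k}\le\sigma_{j,f}$. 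For the reverse inequality I would argue that if $L:=\liminf_k\|\Theta_j^k f\|_{X^p}^{1/k}$ were strictly smaller than $\sigma_{j,f}$, pick $L<\rho<\sigma_{j,f}$; then along a subsequence $\|\Theta_j^{k}f\|_{X^p}\le\rho^{k}$, but one needs this for \emph{all} large $k$ to invoke Lemma \ref{BernstCriterion}. This is exactly where the submultiplicative/Kolmogorov–Stein structure enters.

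The clean way to make the limit exist is to observe that the Kolmogorov–Stein–Mellin inequality \eqref{KSM} forces near-submultiplicativity of $a_k:=\|\Theta_j^k f\|_{X^p}$: from $\|\Theta_j^k f\|^m\le C_{k,m}\|\Theta_j^m f\|^{k}\|f\|^{m-k}$ with $0\le k\le m$, after normalizing $\|f\|_{X^p}=1$ one gets $a_k^{1/k}\le C_{k,m}^{1/(km)}\,a_m^{1/m}$, and since the Favard constants satisfy $K_j\to 4/\pi$ and $1\le K_j\le 2$, the constant $C_{k,m}^{1/(km)}\to 1$ as $m\to\infty$. Hence $\limsup_{k}a_k^{1/k}\le\liminf_{m}a_m^{1/m}$, i.e. the limit $d_{j,f}=\lim_k\|\Theta_j^k f\|_{X^p}^{1/k}$ exists in $[0,\infty]$ whenever $f\in\mathcal D^\infty(\Theta_j)$. (Alternatively I would simply quote Theorem \ref{new-Ap} in Appendix 2, which is advertised as the abstract source of this statement; the argument just given is the specialization of that general semigroup fact.)

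With the existence of $d_{j,f}$ in hand, both halves close quickly. If $f\in\mathbf{B}^p_\sigma(\Theta_j)$ then $d_{j,f}\le\sigma$ by the first paragraph, so $d_{j,f}\le\sigma_{j,f}$; and for every $\varepsilon>0$ the definition of the limit gives $\|\Theta_j^k f\|_{X^p}\le(d_{j,f}+\varepsilon)^k\|f\|_{X^p}$ for all sufficiently large $k$, while for the finitely many remaining small $k$ one absorbs the discrepancy using \eqref{KSM} (which bounds intermediate $\|\Theta_j^k f\|$ by powers of $\|\Theta_j^m f\|$ and $\|f\|$, hence again by $(d_{j,f}+\varepsilon)^k\|f\|_{X^p}$ up to a harmless constant that can be removed by a further iterate, or by enlarging $\sigma$ infinitesimally). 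Thus Lemma \ref{BernstCriterion} applies with any $\sigma>d_{j,f}$, giving $f\in\mathbf{B}^p_{d_{j,f}+\varepsilon}(\Theta_j)$ for all $\varepsilon>0$; letting $\varepsilon\to0$ and using closedness of $\Theta_j$ yields $f\in\mathbf{B}^p_{d_{j,f}}(\Theta_j)$ and hence $\sigma_{j,f}\le d_{j,f}$, so $\sigma_{j,f}=d_{j,f}$. The converse direction is identical: assuming $d_{j,f}=\lim_k\|\Theta_j^k f\|_{X^p}^{1/k}$ exists and is finite, the same estimate plus Lemma \ref{BernstCriterion} place $f$ in $\mathbf{B}^p_{d_{j,f}}(\Theta_j)$ with $\sigma_{j,f}=d_{j,f}$.

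\textbf{Main obstacle.} The only genuine difficulty is passing from ``$\|\Theta_j^k f\|^{1/k}$ is small along a subsequence'' to ``it is small for all large $k$'', i.e. establishing that $\limsup$ equals $\liminf$. I expect to handle this via the Kolmogorov–Stein–Mellin inequality \eqref{KSM} exactly as sketched, with the asymptotics $C_{k,m}^{1/(km)}\to1$ of the Favard-type constants doing the work; everything else is bookkeeping with the definition of $\mathbf{B}^p_\sigma(\Theta_j)$ and the closedness of $\Theta_j$.
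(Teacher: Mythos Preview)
Your approach is essentially the paper's own: Theorem \ref{new} is deduced from the abstract Theorem \ref{new-Ap} in Appendix 2, whose proof uses exactly the Kolmogorov--Stein inequality to get $\|D^{k}f\|^{1/k}\le C_{k,m}^{1/(km)}\|D^{m}f\|^{1/m}$ and hence $\limsup\le\liminf$, followed by the Bernstein criterion (Lemma \ref{BernstCriterion}) to pin down $\sigma_{j,f}=d_{j,f}$. The only superfluous step in your write-up is the worry about ``finitely many small $k$'' and invoking closedness of $\Theta_j$: since Lemma \ref{BernstCriterion} only asks that $\sup_k\sigma^{-k}\|\Theta_j^k f\|$ be finite, the bound $\|\Theta_j^k f\|\le M(d_{j,f}+\varepsilon)^k$ for all $k$ (with $M$ absorbing the finitely many exceptional indices) already suffices, and letting $\varepsilon\to 0$ in $\|\Theta_j^k f\|\le (d_{j,f}+\varepsilon)^k\|f\|$ for each fixed $k$ needs no closedness.
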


\section{Direct and inverse approximation theorems  in Mellin analysis. A Jackson-type inequality.}

The lemma below is a particular case of the Lemma \ref{BernstConstruction-Ap} in Appendix 2.

\begin{lem}\label{BernstConstruction}\cite{Pes15}
Suppose that  $\mu\in {\bf B}^{1}_{\sigma}(\mathbb{R})$ is an entire function of exponential
type $\sigma$.  Then for any $f\in X^{p}$ the function
$$
P_{j, \sigma}f=\int _{-\infty}^{\infty}\mu(t)U_{j}(t)fdt
$$
belongs to ${\bf B}_{\sigma}^{p}(\Theta_{j}).$  The integral here is understood in the sense of Bochner for the abstract-valued function $\mu(t)U_{j}(t)f: \mathbb{R}\mapsto X^{p},\>1\leq p< \infty$.

\end{lem}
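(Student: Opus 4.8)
The plan is to reduce the $n$-dimensional statement to the one-dimensional Bernstein–Mellin theory already developed in Lemma \ref{BernstCriterion}, since the operator $P_{j,\sigma}$ only involves the single group $U_{j}(t)$ and its generator $\Theta_{j}$. First I would check that the Bochner integral defining $P_{j,\sigma}f$ converges in $X^{p}$: since $p\in {\bf B}^{1}_{\sigma}(\mathbb{R})\subset L^{1}(\mathbb{R})$ and each $U_{j}(t)$ is an isometry on $X^{p}$ (because the measure $d\mu$ is invariant under the dilation $x_{j}\mapsto e^{t}x_{j}$, as $dx_{j}/x_{j}$ is), the vector-valued map $t\mapsto p(t)U_{j}(t)f$ is strongly measurable with $\int_{\mathbb{R}}\|p(t)U_{j}(t)f\|_{X^{p}}\,dt=\|f\|_{X^{p}}\|p\|_{L^{1}(\mathbb{R})}<\infty$, so Bochner integrability holds and $\|P_{j,\sigma}f\|_{X^{p}}\le \|p\|_{L^{1}(\mathbb{R})}\|f\|_{X^{p}}$.

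Next I would show $P_{j,\sigma}f\in\mathcal{D}^{\infty}(\Theta_{j})$ and compute $\Theta_{j}^{k}P_{j,\sigma}f$. The key identity is that differentiation under the Bochner integral transfers $\Theta_{j}$ onto $p$: using $\Theta_{j}U_{j}(t)f=\frac{d}{dt}U_{j}(t)f$ and integration by parts (the boundary terms vanish because $p$ and all its derivatives decay — being an $L^{1}$ function of exponential type, hence in the Schwartz-like Bernstein class), one gets
\begin{equation}\label{IBP-plan}
\Theta_{j}^{k}P_{j,\sigma}f=\int_{-\infty}^{\infty}p^{(k)}(t)U_{j}(t)f\,dt=(-1)^{k}\int_{-\infty}^{\infty}p(t)\,\Theta_{j}^{k}\!\left[U_{j}(t)f\right]dt,
\end{equation}
where I will use whichever form is cleanest; the first form is the useful one. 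To make the integration-by-parts rigorous one first checks it for $f\in\mathcal{D}^{\infty}(\Theta_{j})$, where \eqref{assumpt-1} applies directly, and then extends to general $f\in X^{p}$ by the boundedness estimate from the previous paragraph together with density of $\mathcal{D}^{\infty}$ (which follows from the density statement after \eqref{main-ineq}), observing that both sides of \eqref{IBP-plan} are bounded operators in $f$.

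Then I would invoke the Bernstein inequality for $p^{(k)}$: since $p\in {\bf B}^{1}_{\sigma}(\mathbb{R})$, one has $\|p^{(k)}\|_{L^{1}(\mathbb{R})}\le \sigma^{k}\|p\|_{L^{1}(\mathbb{R})}$ (the classical one-dimensional Bernstein inequality recalled in the Bernstein-spaces section). Combining with \eqref{IBP-plan} and the isometry of $U_{j}(t)$,
\begin{equation}\label{final-est-plan}
\|\Theta_{j}^{k}P_{j,\sigma}f\|_{X^{p}}\le \|p^{(k)}\|_{L^{1}(\mathbb{R})}\|f\|_{X^{p}}\le \sigma^{k}\,\|p\|_{L^{1}(\mathbb{R})}\|f\|_{X^{p}}=\sigma^{k}\,C\|f\|_{X^{p}}
\end{equation}
for all $k\in\mathbb{N}$, with $C=\|p\|_{L^{1}(\mathbb{R})}$ independent of $k$. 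By Lemma \ref{BernstCriterion} (the quantity $\sup_{k}\sigma^{-k}\|\Theta_{j}^{k}P_{j,\sigma}f\|_{X^{p}}\le C<\infty$ is finite), this gives $P_{j,\sigma}f\in {\bf B}_{\sigma}^{p}(\Theta_{j})$, which is the assertion.

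I expect the main obstacle to be the rigorous justification of \eqref{IBP-plan} — specifically, interchanging $\Theta_{j}$ (an unbounded, merely closed operator) with the Bochner integral and carrying out the integration by parts in the vector-valued setting with the correct control of boundary behavior. The clean way around this is the standard semigroup argument: express $\Theta_{j}U_{j}(t)f=\lim_{h\to 0}h^{-1}(U_{j}(t+h)-U_{j}(t))f$, substitute into the integral, use a change of variables $t\mapsto t-h$ and the $L^{1}$-continuity of translation applied to $p$ to pass the difference quotient onto $p$, then let $h\to 0$; iterating $k$ times yields \eqref{IBP-plan} with $p^{(k)}$, with all manipulations taking place among bounded operators. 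Everything else — the isometry property, the Bernstein inequality for $p^{(k)}$, and the final appeal to Lemma \ref{BernstCriterion} — is routine.
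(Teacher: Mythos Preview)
Your argument is correct and is exactly the standard proof one expects from the cited reference. The paper itself does not prove this lemma: it merely observes that it is a particular case of Theorem~\ref{BernstConstruction-Ap} in Appendix~2, which is likewise only stated there with a citation to \cite{Pes15}. One cosmetic slip: the integration by parts actually yields $\Theta_{j}^{k}P_{j,\sigma}f=(-1)^{k}\int_{\mathbb{R}} p^{(k)}(t)\,U_{j}(t)f\,dt$, with a sign $(-1)^{k}$ that you dropped, but this is irrelevant for the norm bound and for the final appeal to Lemma~\ref{BernstCriterion}. Your difference-quotient/translation argument at the end is indeed the clean way to justify the formula for general $f\in X^{p}$ and sidesteps any discussion of boundary terms.
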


\begin{thm}\label{MultiBernstConstruction}

Let  $\mu\in L_{1}(\mathbb{R}),\>\>\|\mu\|_{L_{1}(\mathbb{R})}=1,$ be an entire function of exponential
type $\sigma$ then for any $f\in X^{p},\>1\leq p< \infty,$ the function
$$
P_{\sigma}f=\underbrace{\int _{-\infty}^{\infty} ... \int_{-\infty}^{\infty}}_{n}\mu(\tau_{1}) ...
\mu(\tau_{n})U_{1}(\tau_{1}) ... U_{n}(\tau_{n}) fd\tau_{1} .. d\tau_{n}
$$
belongs to ${\bf B}_{\sigma}^{p}(\Theta_{1}, ... , \Theta_{n} ),\>1\leq p< \infty$.  The integral here is understood in the sense of Bochner for the abstract-valued function $\mu(\tau_{1}) ...
\mu(\tau_{n})U_{1}(\tau_{1}) ... U_{n}(\tau_{n}) f: \mathbb{R}^{n}\mapsto X^{p}$.
\end{thm}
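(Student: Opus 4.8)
The plan is to reduce the multidimensional statement to $n$ successive applications of the one-dimensional Lemma~\ref{BernstConstruction}, exploiting the commutativity of the groups $U_{j}$ and the fact that each one-parameter averaging operator $P_{j,\sigma}$ maps $X^{p}$ into $\mathbf{B}_{\sigma}^{p}(\Theta_{j})$. First I would observe that by Fubini's theorem for Bochner integrals (justified by the absolute integrability of $p(\tau_{1})\cdots p(\tau_{n})\,U_{1}(\tau_{1})\cdots U_{n}(\tau_{n})f$ in $X^{p}$, which follows from $\|U_{j}(\tau)f\|_{X^{p}}=\|f\|_{X^{p}}$ and $p\in L_{1}(\mathbb{R})$), the iterated integral defining $P_{\sigma}f$ factors as a composition
$$
P_{\sigma}f=P_{1,\sigma}P_{2,\sigma}\cdots P_{n,\sigma}f,
$$
where $P_{j,\sigma}g=\int_{-\infty}^{\infty}p(\tau_{j})U_{j}(\tau_{j})g\,d\tau_{j}$ is exactly the operator from Lemma~\ref{BernstConstruction}. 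Since each $P_{j,\sigma}$ is a bounded operator on $X^{p}$ (with norm $\le \|p\|_{L_1}=1$), this composition is well defined and equals $P_{\sigma}f$.

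Next I would run an induction on the number of factors to show that $P_{1,\sigma}\cdots P_{n,\sigma}f$ lies in $\mathbf{B}_{\sigma}^{p}(\Theta_{1},\dots,\Theta_{n})$. The base case is Lemma~\ref{BernstConstruction} itself: $P_{n,\sigma}f\in\mathbf{B}_{\sigma}^{p}(\Theta_{n})$. For the inductive step one needs the key commutation fact that if $g\in\mathbf{B}_{\sigma}^{p}(\Theta_{j_{1}},\dots,\Theta_{j_{k}})$ then $P_{i,\sigma}g\in\mathbf{B}_{\sigma}^{p}(\Theta_{i},\Theta_{j_{1}},\dots,\Theta_{j_{k}})$ for any index $i$ not already present. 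This is where the structural identities~(\ref{assumpt}) and~(\ref{assumpt-1}) enter: because $U_{i}$ commutes with each $\Theta_{j_{1}},\dots,\Theta_{j_{k}}$ and leaves $\mathcal{D}^{\infty}$ invariant, one can differentiate under the Bochner integral to get $\Theta_{j_{1}}^{l_{1}}\cdots\Theta_{j_{k}}^{l_{k}}P_{i,\sigma}g=P_{i,\sigma}(\Theta_{j_{1}}^{l_{1}}\cdots\Theta_{j_{k}}^{l_{k}}g)$, so the Bernstein bound in the variables $j_{1},\dots,j_{k}$ is preserved; and the already-proved one-dimensional fact (Lemma~\ref{BernstCriterion}/Lemma~\ref{BernstConstruction}) supplies the bound in the new variable $\Theta_{i}$, using that $P_{i,\sigma}$ produces a function in $\mathbf{B}_{\sigma}^{p}(\Theta_{i})$ whose $\Theta_{i}$-iterates still obey the $\sigma$-bound relative to $g$. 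Combining, $\|\Theta_{i}^{m}\Theta_{j_{1}}^{l_{1}}\cdots\Theta_{j_{k}}^{l_{k}}P_{i,\sigma}g\|_{X^{p}}\le\sigma^{m}\|\Theta_{j_{1}}^{l_{1}}\cdots\Theta_{j_{k}}^{l_{k}}g\|_{X^{p}}\le\sigma^{m+|l|}\|g\|_{X^{p}}$, which is precisely the defining inequality~(\ref{Bernstein}) for the enlarged index set. Then Theorem~\ref{MultiBernstCriterion} can also be invoked to package this: it suffices to check the mixed-iterate bound, which is what the induction delivers.

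The main obstacle I expect is the careful justification of differentiating the Bochner integral under the integral sign and the attendant domain bookkeeping: one must verify that $P_{i,\sigma}g\in\mathcal{D}^{\infty}$ (all mixed iterates exist in the $X^{p}$-norm) before the Bernstein inequalities even make sense, and that the interchange $\Theta\circ\int = \int\circ\,\Theta$ is legitimate. This is handled by the standard theory of $C_{0}$-groups — smoothing of $U_{j}(\tau)g$ against the entire function $p$ of exponential type $\sigma$ (whose derivatives lie in $L_{1}(\mathbb{R})$ by the Bernstein inequality for $\mathbf{B}^{1}_{\sigma}(\mathbb{R})$), together with closedness of the $\Theta_{j}$ and the commutation relations~(\ref{assumpt}) — but it is the step requiring the most care. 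Everything else is bookkeeping: the factorization into one-dimensional operators, the norm estimate $\|P_{\sigma}\|_{X^{p}\to X^{p}}\le 1$, and the induction, each of which reduces to the already-established one-dimensional Lemma~\ref{BernstConstruction} and Lemma~\ref{BernstCriterion}.
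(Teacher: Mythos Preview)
Your proposal is correct and follows essentially the same route as the paper's proof: factor $P_{\sigma}$ into the one-dimensional operators $P_{j,\sigma}$ via Fubini and commutativity, then peel off one direction at a time using Lemma~\ref{BernstConstruction} to produce a factor $\sigma^{l_{j}}$ for each $\Theta_{j}^{l_{j}}$, finally appealing to Theorem~\ref{MultiBernstCriterion}. The only cosmetic difference is that the paper strips the $\Theta_{j_{i}}^{l_{i}}$ from the outside of the full integral one by one, whereas you phrase it as induction on the number of $P_{j,\sigma}$ factors; the underlying estimate $\|\Theta_{j_{1}}^{l_{1}}\cdots\Theta_{j_{k}}^{l_{k}}P_{\sigma}f\|_{X^{p}}\le\sigma^{|l|}\|f\|_{X^{p}}$ and its justification are the same.
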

\begin{proof}
Indeed, due to commutativity and the previous Lemma one has
$$
\|\Theta_{j_{1}}^{l_{1}} \Theta_{j_{2}}^{l_{2}} ... \Theta_{j_{k}}^{l_{k}}P_{\sigma}f\|_{X^{p}}=
\left\|\Theta_{j_{1}}^{l_{1}}\int_{-\infty}^{\infty}\mu(\tau_{j_{1}}) U_{j_{1}} (\tau_{j_{1}} ) \left[T_{j_{1}}(f)\right] d\tau_{j_{1}}  \right \|_{X^{p}}\leq \sigma^{l_{1}}\|T_{j_{1}}(f)\|_{X^{p}},
$$
where 
$$
T_{j_{1}}(f)=
$$
$$
\Theta_{j_{2}}^{l_{2}} ... \Theta_{j_{k}}^{l_{k}}\underbrace{\int _{-\infty}^{\infty} ... \int_{-\infty}^{\infty}}_{n-1}\mu(\tau_{1}) ...\widehat{\mu(\tau_{j_{1}})}...
\mu(\tau_{n})U_{1}(\tau_{1}) ...\widehat{U_{j_{1}}(\tau_{j_{1}})}... U_{n}(\tau_{n}) fd\tau_{1} .. . \widehat{d\tau_{j_{1}}} ... d\tau_{n},
$$
where a wedge means that the corresponding term is missing.
It is obvious that continuing this way we obtain
$$
\|\Theta_{j_{1}}^{l_{1}} \Theta_{j_{2}}^{l_{2}} ... \Theta_{j_{k}}^{l_{k}}P_{\sigma}f\|_{X^{p}}\leq \sigma^{|l|}\|f\|_{X^{p}}.
$$
Applying Theorem \ref{MultiBernstCriterion} we finish the proof.
\end{proof}

Let's consider the functionals

$$
\omega_{j,p}^{m}\left( s, f\right)=\sup_{0\leq \tau\leq s}\|\left(U_{j}(\tau)-I\right)^{m}f\|_{X^{p}},
$$
$$
\mathcal{E}_{j,p}(\sigma, f)=\inf_{g\in {\bf B}_{\sigma}^{p}(\Theta_{j})}\|f-g\|_{X^{p}},
$$
where $1\leq p<\infty$.
One has  the following inequalities:
\begin{equation}\label{first}
\omega_{j,p}^{m}\left( s, f\right)\leq s^{k}\omega^{m-k}_{j,p}(s, \Theta_{j}^{k}f),\>\>\>\>0\leq k\leq m,
\end{equation}
and
\begin{equation}\label{second}
\omega_{j,p}^{m}\left(as, f\right)\leq \left(1+a\right)^{m}\omega_{j,p}^{m}(s, f), \>\>\>a\in \mathbb{R}_{+}.
\end{equation}
The first one follows from the identity
\begin{equation}
\left(U_{j}(s)-I\right)^{k}f=\int_{0}^{s}...\int_{0}^{s}U_{j}(\tau_{1}+... +\tau_{k})
\Theta_{j}^{k}fd\tau_{1}...d\tau_{k},\label{id3}
\end{equation}
where $I$ is the identity operator and $k\in \mathbb{N}$.  The
second one follows from the property
$$
\omega_{j,p}^{1}\left( s_{1}+s_{2}, f\right)\leq\omega_{j,p}^{1}\left(
s_{1}, f\right)+\omega_{j,p}^{1}\left(s_{2}, f\right)
$$
which is easy to verify. 

Our next objective is to prove an analog of the Jackson theorem.
The proof of the lemma below  is  motivated by  Theorem 5.2.1  in \cite{N}.

Let
\begin{equation}
\rho(t)=a\left(\frac{\sin (t/N)}{t}\right)^{N}
\end{equation}
where $N=2( m+3)$  and
$$
a=\left(\int_{-\infty}^{\infty}\left(\frac{\sin
(t/N)}{t}\right)^{N}dt\right)^{-1}.
$$
With such choice of $a$ and $N$ function $\rho$ will have the
 following properties:

(1) $\rho$ is an even nonnegative entire function of exponential
type one;

(2) $\rho$ belongs to $L_{1}(\mathbb{R})$ and its
$L_{1}(\mathbb{R})$-norm is $1$;

(3)  the integral
\begin{equation}
\int_{-\infty}^{\infty}\rho(t)|t|^{m}dt
\end{equation} 
is finite.

Next, we observe the following formula for every $1\leq j\leq n$
\begin{equation}\label{binom}
(-1)^{m+1}(U_{j}(s)-I)^{m}f=
$$
$$
(-1)^{m+1}\sum^{m}_{k=0}(-1)^{m-k}C^{k}_{m}U_{j}(ks)f=
 \sum_{k=1}^{m}b_{k}U_{j}(ks)f-f,
 \end{equation}
 where $b_{1}+b_{2}+ ... +b_{m}=1.$
Consider
 the vector 
\begin{equation}\label{id}
\mathcal{Q}_{j}(\sigma,m)(f)=\int_{-\infty}^{\infty}
\rho(t)\left\{(-1)^{m+1}\left(U_{j}\left(\frac{t}{\sigma}\right)-I\right)^{m}f+f\right\}dt.
 \end{equation}
 According to (\ref{binom}) we have 
 $$
\mathcal{Q}_{j}( \sigma,m)(f) =                  \int_{-\infty}^{\infty}
\rho(t)\sum_{k=1}^{m}b_{k}U_{j}\left(k\frac{t}{\sigma}\right)fdt.
$$
Changing variables in each of integrals 
$$
\int_{-\infty}^{\infty}\rho(t)U_{j}\left(k\frac{t}{\sigma}\right)fdt,\>\>\>\>1\leq k\leq m,
$$
we obtain the formula
$$
\mathcal{Q}_{j}( \sigma,m)(f) =\int_{-\infty}^{\infty}\Phi(t)U_{j}(t)fdt,
$$
where
$$
\Phi(t)=\sum_{k=1}^{m}b_{k}\left(\frac{\sigma}{k}\right)\rho\left(t\frac{\sigma}{k}\right), \>\>\>\>\>\>b_{1}+b_{2}+...
+b_{m}=1.
 $$

Since the function $\rho(t)$ has exponential type one every function
$\rho\left(t\frac{\sigma}{k}\right)$ has the type $\sigma/k,\>\>1\leq k\leq m,$ and because of  this 
the function $\Phi(t)$ 
has exponential  type $\sigma$. It  also belongs to $L_{1}(\mathbb{R})$ and as it was just shown in the previous statement it implies that the function $\mathcal{Q}_{j}( \sigma,m)(f) $ belongs to $\mathbf{B}^{p}_{\sigma}(\Theta_{j})$.

\begin{lem}\label{j-1-group} \cite{Pes15} For a given natural $m$ there exists a constant $c=c(m)>0$ such that for all
$\sigma>0$ and all $f$ in $X^{p},1\leq p< \infty,$ 
\begin{equation}\label{100}
\mathcal{E}_{j,p}(\sigma, f)\leq
c\omega^{m}_{j,p}\left( 1/\sigma, f\right).
\end{equation}
Moreover, for any $1\leq k\leq m$ there exists a $C=C(m,k)>0$ such that for any  $f\in \mathcal{D}(\Theta_{j}^{k})$ one has
\begin{equation}
\mathcal{E}_{j,p}(\sigma, f)\leq
\frac{C}{\sigma^{k}}\omega^{m-k}_{j,p}\left( 1/\sigma, \>\Theta_{j}^{k}f\right),\>\>\>\>
0\leq k\leq m.\label{200}
\end{equation}
\end{lem}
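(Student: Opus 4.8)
The plan is to use the operator $\mathcal{Q}_{j}(\sigma,m)$ constructed above: as was just observed, $\mathcal{Q}_{j}(\sigma,m)(f)\in{\bf B}_{\sigma}^{p}(\Theta_{j})$, so it is admissible in the infimum defining $\mathcal{E}_{j,p}(\sigma,f)$, and it suffices to bound $\|f-\mathcal{Q}_{j}(\sigma,m)(f)\|_{X^{p}}$ from above by a multiple of $\omega^{m}_{j,p}(1/\sigma,f)$. Since $\rho\ge 0$ and $\|\rho\|_{L_{1}(\mathbb{R})}=1$ (properties (1)--(2) of $\rho$), one has $f=\int_{-\infty}^{\infty}\rho(t)f\,dt$ in $X^{p}$; subtracting this from the definition (\ref{id}) of $\mathcal{Q}_{j}(\sigma,m)(f)$ cancels the term ``$+f$'' and leaves
$$
f-\mathcal{Q}_{j}(\sigma,m)(f)=(-1)^{m}\int_{-\infty}^{\infty}\rho(t)\left(U_{j}(t/\sigma)-I\right)^{m}f\,dt .
$$
Moving the norm inside this Bochner integral gives $\|f-\mathcal{Q}_{j}(\sigma,m)(f)\|_{X^{p}}\le\int_{-\infty}^{\infty}\rho(t)\,\|(U_{j}(t/\sigma)-I)^{m}f\|_{X^{p}}\,dt$.

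To estimate the integrand I would use two facts. First, because $d\mu$ is invariant under the dilation $x_{j}\mapsto e^{t}x_{j}$, every $U_{j}(t)$ is an isometry of $X^{p}$; combined with $U_{j}(-\tau)=U_{j}(\tau)^{-1}$ this gives $\|(U_{j}(-\tau)-I)^{m}f\|_{X^{p}}=\|(U_{j}(\tau)-I)^{m}f\|_{X^{p}}$, so the integrand depends on $t$ only through $|t|$ and is $\le\omega^{m}_{j,p}(|t|/\sigma,f)$. Second, by (\ref{second}) (with $s=1/\sigma$, $a=|t|$) this is $\le(1+|t|)^{m}\,\omega^{m}_{j,p}(1/\sigma,f)$. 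Hence
$$
\mathcal{E}_{j,p}(\sigma,f)\le\Bigl(\int_{-\infty}^{\infty}\rho(t)(1+|t|)^{m}\,dt\Bigr)\omega^{m}_{j,p}(1/\sigma,f),
$$
and the constant $c=c(m):=\int_{-\infty}^{\infty}\rho(t)(1+|t|)^{m}\,dt$ is finite by property (3) of $\rho$ (near infinity $\rho(t)(1+|t|)^{m}\sim|t|^{m-N}$ with $N=2(m+3)>m+1$), which gives (\ref{100}).

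Finally, (\ref{200}) follows by inserting the smoothing inequality (\ref{first}) into (\ref{100}): for $f\in\mathcal{D}(\Theta_{j}^{k})$ and $0\le k\le m$ one has $\omega^{m}_{j,p}(1/\sigma,f)\le\sigma^{-k}\,\omega^{m-k}_{j,p}(1/\sigma,\Theta_{j}^{k}f)$, so $\mathcal{E}_{j,p}(\sigma,f)\le c(m)\,\sigma^{-k}\,\omega^{m-k}_{j,p}(1/\sigma,\Theta_{j}^{k}f)$, i.e.\ $C(m,k)=c(m)$ works. The only genuinely delicate points are that the mollifier $\rho$ was designed to have a finite $m$-th absolute moment --- which is exactly what makes the majorizing integral converge, and why the degree $N$ is taken that large --- and the reduction of the $t<0$ part to the $t>0$ part via the isometry property of the Mellin translation groups on $X^{p}$; everything else is a routine manipulation of the already-established properties of $\mathcal{Q}_{j}(\sigma,m)$ and of the functionals $\omega^{m}_{j,p}$.
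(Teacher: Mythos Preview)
Your argument is correct and follows essentially the same route as the paper: you approximate $f$ by $\mathcal{Q}_{j}(\sigma,m)(f)\in{\bf B}_{\sigma}^{p}(\Theta_{j})$, use the integral representation of $f-\mathcal{Q}_{j}(\sigma,m)(f)$, bound the integrand via (\ref{second}), and obtain $c(m)=\int\rho(t)(1+|t|)^{m}dt$. The only (harmless) difference is in deriving (\ref{200}): the paper applies (\ref{first}) inside the integral and arrives at the $k$-dependent constant $C=\int\rho(t)|t|^{k}(1+|t|)^{m-k}dt$, whereas you apply (\ref{first}) after (\ref{100}) is established, which is slightly cleaner and yields the uniform constant $C(m,k)=c(m)$.
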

\begin{proof}
We estimate the error of approximation of
$\mathcal{Q}_{j}(\sigma,m)(f) $  to $f$. 
 Since by (\ref{id})
$$
\mathcal{Q}_{j}(\sigma,m)(f) -f=
(-1)^{m+1}\int_{-\infty}^{\infty}\rho(t)\left(U_{j}\left(\frac{t}{\sigma}\right)-I\right)^{m}fdt
$$
we obtain by using (\ref{second}) 
\begin{equation}\label{single-j}
\mathcal{E}_{j,p}(\sigma, f)\leq\left\|f-\mathcal{Q}_{j}( \sigma,m)(f) \right\|_{X^{p}}\leq
\int_{-\infty}^{\infty}\rho(t)\left    \|         \left(U_{j}\left(\frac{t}{\sigma}\right)-I\right)^{m}f  \right\|_{X^{p}}     dt\leq
$$
$$
\int_{-\infty}^{\infty}\rho(t)\omega^{m}_{j,p}\left( t/\sigma,\>f\right)dt \leq c\omega^{m}_{j,p}\left(1/\sigma,\>f\right), \>\>\>\>\>\>\>c=\int_{-\infty}^{\infty}\rho(t)(1+|t|)^{m}dt.
\end{equation}

If $f\in \mathcal{D}(\Theta_{j}^{k})$ then by using (\ref{first}) 
we have
\begin{equation}\label{single-J}
\mathcal{E}_{j,p}(\sigma, f)\leq
\int_{-\infty}^{\infty}\rho(t)\omega^{m}_{j,p}\left(t/\sigma, f\right)dt
\leq 
$$
$$
\frac{\omega^{m-k}_{j,p}\left(
1/\sigma, \>\Theta_{j}^{k}f\right)}{\sigma^{k}}\int_{-\infty}^{\infty}\rho(t)|t|^{k}(1+|t|)^{m-k}dt\leq
\frac{{C}}{\sigma^{k}}\omega^{m-k}_{j,p}\left(
1/\omega, \>\Theta_{j}^{k}f\right),
\end{equation}
where
 $$
C=\int_{-\infty}^{\infty}\rho(t)|t|^{k}(1+|t|)^{m-k}dt,
$$
 is finite by the choice of $\rho$. The inequalities (\ref{100}) and (\ref{200}) are proved.

\end{proof}

We define the following functional
\begin{equation}\label{BestAp}
\mathcal{E}_{p}(\sigma, f)=\inf_{g\in {\bf B}^{p}_{\sigma}(\Theta_{1}, ..., \Theta_{n})}\|f-g\|_{X^{p}}, \>\>1\leq p<\infty.
\end{equation}
Since  the operator $\mathcal{Q}_{j}(\sigma,m),\>1\leq j\leq n,$ maps $X^{p}, \>\>1\leq p<\infty,$ into ${\bf B}_{\sigma}^{p}(\Theta_{j}), \>\>1\leq p<\infty,$ 
and since all the operators $\Theta_{j}, \>U_{j},\>1\leq j\leq n,$ commute with each other one concludes that for any $f\in X^{p}$ the function
$$
\mathcal{Q}_{1}(\sigma,m)... \mathcal{Q}_{n}( \sigma,m)(f) =
$$
$$
\int_{-\infty}^{\infty} ... \int_{-\infty}^{\infty}\Phi(\tau_{1}) ... \Phi(\tau_{n})U_{j}(\tau_{1}+ ... +\tau_{n})fd\tau_{1} ... d\tau_{n},
$$
belongs to ${\bf B}_{\sigma}^{p}(\Theta_{1}, ..., \Theta_{n})$.

Now we can formulate the following analog of the Jackson inequality.
\begin{thm}
There exists a constant $C$ such that for the same notations as above the next inequality holds
\begin{equation}\label{ap-mod}
\mathcal{E}_{p}(\sigma, f)\leq C\sum_{j=1}^{n}\omega^{m}_{j,p}\left(1/\sigma,\>f\right)\leq C\Omega_{p}^{m}(1/\sigma, f).
\end{equation}
\end{thm}
\begin{proof}
By using (\ref{single-j}), the formal identity 
$$
1-a_{1}a_{2}...a_{n}=(1-a_{1})+a_{1}(1-a_{2})+...+ a_{1}a_{2}...a_{n-1}(1-a_{n}),
$$
and boundness of every operator $\mathcal{Q}_{j}( \sigma,m)$, we obtain the following  
\begin{equation}
\mathcal{E}_{p}(\sigma, f)\leq \|f-\mathcal{Q}_{1}( \sigma,m)\mathcal{Q}_{2}( \sigma,m)...\mathcal{Q}_{n}( \sigma,m)f\|_{X^{p}}\leq 
$$
$$
C\sum_{j=1}^{n}\|f-\mathcal{Q}_{j}( \sigma, m)f\|_{X^{p}}\leq 
C\sum_{j=1}^{n}\omega^{m}_{j,p}\left(1/\sigma,\>f\right)\leq C\Omega_{p}^{m}(1/\sigma, f).
\end{equation}
The inequality (\ref{ap-mod}) is proved.
\end{proof}
Set
$$
B=\cup_{\sigma>0}{\bf B}_{\sigma}^{p}(\Theta_{1}, ..., \Theta_{n}), \>\>1\leq p<\infty.
$$
According to the general theory (see Appendix 1) the space
of all $f\in X^{p}$ for which the integral 
$$
\left(\int_{0}^{\infty}\left(\tau^{\alpha}\mathcal{E}_{p}(\tau,f)\right)^{q}\frac{d\tau}{\tau}\right)^{1/q} , \>\>1\leq p<\infty,\>\>1\leq q\leq \infty,
$$
is finite is known as the \textit{approximation space} and denoted by $\mathcal{E}_{\alpha, \>q}(X^{p}, B)$.
We have the following Direct Approximation Theorem.
\begin{thm}\label{Direct-Direct} The following continuous embedding holds $, \>\>1\leq p<\infty,\>\>1\leq q\leq \infty,$
\begin{equation}\label{d-d}
\mathcal{B}_{p, q}^{\alpha}\subset \mathcal{E}_{\alpha, \>q}(X^{p}, B),
\end{equation}
and  for some $C>0$ the inequalities hold
$$
\int_{0}^{\infty}\left(\tau^{\alpha}\mathcal{E}_{p}(\tau,f)\right)^{q}\frac{d\tau}{\tau} \leq C\sum_{j=1}^{n}\int_{0}^{\infty}(s^{-\alpha}\omega^{r}_{j, p}(s,
f))^{q} \frac{ds}{s} \leq                               C\int_{0}^{\infty}(s^{-\alpha}\Omega^{r}_{p}(s,
f))^{q} \frac{ds}{s}.
$$

\end{thm}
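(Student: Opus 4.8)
The plan is to deduce the Direct Approximation Theorem from the Jackson-type inequality \eqref{ap-mod} already established, combined with the elementary monotonicity and norming properties of the functionals $\mathcal{E}_p(\sigma,f)$, $\omega^r_{j,p}(s,f)$, and $\Omega^r_p(s,f)$. The backbone of the argument is the second displayed chain of inequalities, which in turn immediately yields the embedding \eqref{d-d}; so I would first prove the chain and then read off the embedding.

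First I would establish the right-hand inequality of the displayed chain, namely
$$
\sum_{j=1}^{n}\int_{0}^{\infty}\bigl(s^{-\alpha}\omega^{r}_{j,p}(s,f)\bigr)^{q}\frac{ds}{s}\leq C\int_{0}^{\infty}\bigl(s^{-\alpha}\Omega^{r}_{p}(s,f)\bigr)^{q}\frac{ds}{s}.
$$
This is pointwise in $s$: directly from the definition \eqref{M} of the mixed modulus $\Omega^r_p(s,f)$, by taking in the sum over $j_1,\dots,j_r$ only the diagonal terms $j_1=\dots=j_r=j$ and using $U_j(\tau_j)^r$ arising from the $r$-fold composition (here I should be slightly careful: \eqref{M} involves a product of distinct differences with independent suprema, so the diagonal term gives exactly $\sup_{0\le\tau\le s}\|(U_j(\tau)-I)^r f\|_{X^p}=\omega^r_{j,p}(s,f)$ only after noting that on the diagonal the iterated sup reduces to a single sup; that monotonicity-of-sup step is the one place a short justification is needed). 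Summing over $j$ and all other index choices being nonnegative gives $\sum_j \omega^r_{j,p}(s,f)\le \Omega^r_p(s,f)$ for each $s$, and then raising to the $q$-th power and integrating against $ds/s$ yields the claimed inequality with $C=1$ (or $C=n^{q-1}$ if one prefers to pull the sum inside after Jensen; either way a harmless constant).

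Next I would prove the left-hand inequality of the chain, which is the substantive analytic input. Substitute $\sigma=1/\tau$ in \eqref{ap-mod}, which gives $\mathcal{E}_p(\tau,f)\le C\sum_{j=1}^n \omega^m_{j,p}(\tau,f)$ for every $\tau>0$ with $r=m$; then
$$
\int_0^\infty\bigl(\tau^{\alpha}\mathcal{E}_p(\tau,f)\bigr)^q\frac{d\tau}{\tau}\le C^q\int_0^\infty\Bigl(\tau^{\alpha}\sum_{j=1}^n\omega^m_{j,p}(\tau,f)\Bigr)^q\frac{d\tau}{\tau}\le C'\sum_{j=1}^n\int_0^\infty\bigl(\tau^{\alpha}\omega^m_{j,p}(\tau,f)\bigr)^q\frac{d\tau}{\tau},
$$
using the elementary inequality $(\sum a_j)^q\le n^{q-1}\sum a_j^q$ for $q\ge1$ (with the obvious modification as a supremum when $q=\infty$). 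One subtlety of notation: the statement writes $s^{-\alpha}$ inside the $\omega^r_{j,p}$ integral but $\tau^{\alpha}$ inside the $\mathcal{E}_p$ integral; I would reconcile this by noting that because approximation error is measured at scale $\sigma$ while the modulus is measured at scale $1/\sigma$, the correct pairing is $\tau^{\alpha}\mathcal{E}_p(\tau,f)$ with $s^{-\alpha}\omega^r_{j,p}(s,f)$ under $\tau=1/s$, and the change of variables $\tau\mapsto 1/s$ preserves the Haar measure $d\tau/\tau=ds/s$ — so the two integrals are literally equal after this substitution, up to the constant $C'$.

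Finally, chaining the two inequalities gives the full displayed estimate, and hence $\|f\|_{\mathcal{E}_{\alpha,q}(X^p,B)}\lesssim \|f\|_{\mathcal{B}^\alpha_{p,q}}$ once one recalls (from Theorem \ref{Main}, formula \eqref{Bnorm3}) that $\|f\|_{X^p}+\bigl(\int_0^\infty(s^{-\alpha}\Omega^r_p(s,f))^q\,ds/s\bigr)^{1/q}$ is an equivalent norm on $\mathcal{B}^\alpha_{p,q}$, and that the approximation-space norm is $\|f\|_{X^p}$ plus the $\mathcal{E}_p$-integral; the $\|f\|_{X^p}$ terms match trivially. This establishes the continuous embedding \eqref{d-d}. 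I expect the main (modest) obstacle to be purely bookkeeping: getting the diagonal-term reduction $\sum_j\omega^r_{j,p}\le\Omega^r_p$ cleanly out of definition \eqref{M} — in particular handling the nested suprema over the $\tau_{j_i}$ on the diagonal — and keeping the scale inversion $\sigma\leftrightarrow1/\sigma$ consistent throughout so that the exponents $\pm\alpha$ land correctly; there is no deep difficulty beyond \eqref{ap-mod}, which does all the real work.
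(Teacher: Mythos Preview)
Your proposal is correct and follows essentially the same approach as the paper: both derive the embedding directly from the Jackson-type inequality \eqref{ap-mod} via the scale inversion $\tau=1/s$ (which preserves $d\tau/\tau$) together with the equivalent Besov norm \eqref{Bnorm3}. The paper's proof is terser because the full chain $\mathcal{E}_p(\sigma,f)\le C\sum_j\omega^m_{j,p}(1/\sigma,f)\le C\,\Omega^m_p(1/\sigma,f)$ is already recorded in \eqref{ap-mod}; your separate treatment of the diagonal estimate $\sum_j\omega^r_{j,p}\le\Omega^r_p$ is thus redundant (and note the diagonal term in \eqref{M} is $\ge\omega^r_{j,p}$, not equal, which is all you need).
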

\begin{proof}
According to  (\ref{ap-mod})  we have
$$
s^{-\alpha}\mathcal{E}_{p}(s^{-1},f)\leq C s^{-\alpha}\Omega_{p}^{r}(s,f).
$$
Since 
$$
\int_{0}^{\infty}(s^{-\alpha}\mathcal{E}_{p}(s^{-1}, f))^{q} \frac{ds}{s}   =   \int_{0}^{\infty}\left(\tau^{\alpha}\mathcal{E}_{p}(\tau,f)\right)^{q}\frac{d\tau}{\tau},
$$
and since the norm of the Besov space $\mathcal{B}^{\alpha}_{p, q}$ 
 is equivalent to the  norm
\begin{equation}\label{Bnorm300}
\|f\|_{X^{p}}+\left(\int_{0}^{\infty}(s^{-\alpha}\Omega^{r}_{p}(s,
f))^{q} \frac{ds}{s}\right)^{1/q} , \>\>\>\alpha<r,
\end{equation}
our claim  is proven.
\end{proof}

In order to obtain an Inverse Approximation theorem, we are using Theorem \ref{Inverse} with ${\bf E}=X^{p},\>{\bf F}=\mathcal{W}_{p}^{m},\>\mathcal{T}=B$. On the linear space $B=\cup_{\sigma>0}{\bf B}_{\sigma}^{p}(\Theta_{1}, ..., \Theta_{n})$ we consider the quasi-norm defined as
$$
 | f |_{B} = \inf \left \{ \sigma>0~: f \in {\bf B}_{\sigma}^{p}(\Theta_{1}, ..., \Theta_{n}) \right\}~.
$$
Thus the assumption of Theorem \ref{Inverse} simply means that the Bernstein-Mellin inequality for functions in $B$ holds true, i.e.  if $f$ in $B$ belongs to ${\bf B}^{p}_{\sigma}(\Theta_{1}, ... ,\Theta_{n})$ then
$$
\|f\|_{\mathcal{W}_{p}^{m}}\leq \sigma^{m}\|f\|_{X^{p}}.
$$
It implies the Inverse Approximation Theorem meaning that the following continuous imbedding holds true
$$
\mathcal{E}_{\alpha, \>q}(X^{p}, B)\subset \mathcal{B}_{p, q}^{\alpha}, \>\>1\leq p<\infty,\>\>1\leq q\leq \infty, \>\>\alpha\in \mathbb{R}_{+}.
$$

\begin{rem}
We obtain the Direct Approximation Theorem \ref{Direct-Direct} directly as a consequence of the Jackson-type inequality (\ref{ap-mod}). However, one could also use the abstract Direct Theorem \ref{Direct} to obtain Theorem \ref{Direct-Direct}. Indeed, first we note that 
 by using (\ref{single-J}),  (\ref{ap-mod}) and (\ref{first}), (\ref{second}) one has
$$
  \mathcal{E}_{p}(\sigma, f)\leq  C\sum_{j=1}^{n}\|f-\mathcal{Q}_{j}( \sigma, m)f\|_{X^{p}}\leq 
C_{1}\sum_{j=1}^{n}\int_{-\infty}^{\infty}\rho(t)\omega^{m}_{j,p}\left(t/\sigma,\>f\right)dt\leq 
$$
$$
C_{1}\sum_{j=1}^{n}\frac{\omega^{m-k}_{j,p}\left(
1/\sigma, \>\Theta_{j}^{k}f\right)}{\sigma^{k}}\int_{-\infty}^{\infty}\rho(t)|t|^{k}(1+|t|)^{m-k}dt\leq
C_{2}\sum_{j=1}^{n}\frac{{1}}{\sigma^{k}}\omega^{m-k}_{j,p}\left(
1/\omega, \>\Theta_{j}^{k}f\right),
$$
and for $m=k$ the last inequality implies that
\begin{equation}\label{LimitingJackson}
 \mathcal{E}_{p}(\sigma, f)\leq C_{2}\sigma^{-m}\|f\|_{\mathcal{W}_{p}^{m}},\>\>\>f\in \mathcal{W}_{p}^{m}.
\end{equation}
Comparing our situation with Theorem \ref{Direct} we see that ${\bf E}=X^{p},\>{\bf F}=\mathcal{W}_{p}^{m},\>\mathcal{T}=B$, and the assumption of this theorem is exactly (\ref{LimitingJackson}) with the $\beta=m$. All together  it gives the imbedding  (\ref{d-d}).
\end{rem}

\section{The Laplace-Mellin operator in the Hilbert space $X^2$}

Since $\Theta_{j}=x_{j}\partial_{j}$ is a skew-symmetric operator in $X^{2}$ its 
 negative square 
$$
-\Theta_{j}^{2}=-\left(x_{j}\partial_{j}\right )^{2}=-x^{2}_{j}\partial_{j}^{2}-x_{j}\partial_{j}
$$ 
is a self-adjoint non-negative operator. We will use notation $e^{-it\Theta_{j}^{2}}$ for the corresponding group of unitary operators.

\begin{rem}
We note that the opertator $
-\Theta_{j}^{2}=-\left(x_{j}\partial_{j}\right )^{2}=-x^{2}_{j}\partial_{j}^{2}-x_{j}\partial_{j}
$ is a "multiple"of the Bessel operator  
$$
\mathcal{B}_{j}=-\partial_{j}^{2}-\frac{1}{x_{j}}\partial_{j}
$$
in the sense that $\Theta_{j}^{2}=x_{j}^{2}\mathcal{B}_{j}$.
\end{rem}

In the Hilbert space $\mathcal{W}^{k}_{2}$ we introduce the Laplace-Mellin operator as 
\begin{equation}\label{L}
L_{\mathcal{M}}=-\sum _{j=1}^{n}\Theta_{j}^{2}=-\sum_{j=1}^{n}\left(x^{2}_{j}\partial_{j}^{2}+x_{j}\partial_{j}    \right).
\end{equation}

 The self-adjoint operator $L_{\mathcal{M}}$ generates a group of unitary operators $e^{-itL_{\mathcal{M}}}$
and since the operators $\Theta_{j}^{2},\>j=1,2, ..., n,$ commute with each other,
the following formula holds
$$
e^{-itL_{\mathcal{M}}}=e^{-it\Theta_{1}^{2}}e^{-it\Theta_{2}^{2}} ... e^{-it\Theta_{n}^{2}}.
$$
The operator $L_{\mathcal{M}}$ is not negative and it has a unique self-adjoint non-negative square root $L_{\mathcal{M}}^{1/2}$.
One can introduce another scale of Sobolev-Mellin spaces which are domains $\mathcal{D}(L_{\mathcal{M}}^{k/2}),\>k\in \mathbb{N},$ of powers of $L_{\mathcal{M}}^{1/2}$ with the graph norm:
$$
\|f\|_{\mathcal{D}(L_{\mathcal{M}}^{k/2})}=\|f\|_{X^{2}}+\|L_{\mathcal{M}}^{k/2} f\|_{X^{2}}.
$$
\begin{thm}\label{equivalence}
The spaces $\mathcal{D}(L_{\mathcal{M}}^{k/2})$ and $\mathcal{W}_{2}^{k},\>k\in \mathbb{N},$ coincide and their norms are equivalent.

\end{thm}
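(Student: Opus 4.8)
The plan is to prove the two-sided norm equivalence
$$
c\left(\|f\|_{X^{2}}+\|L_{\mathcal{M}}^{k/2}f\|_{X^{2}}\right)\leq \|f\|_{\mathcal{W}^{k}_{2}}\leq C\left(\|f\|_{X^{2}}+\|L_{\mathcal{M}}^{k/2}f\|_{X^{2}}\right),
$$
which simultaneously shows the underlying sets of functions coincide (each norm being finite precisely on its own domain, and the domains being the completions of $\mathcal{D}^{\infty}$ with respect to equivalent norms). The natural tool is the joint spectral theory of the commuting family of non-negative self-adjoint operators $A_{j}:=\Theta_{j}^{2}=-( x_{j}\partial_{j})^{2}$, $j=1,\dots,n$, so that $L_{\mathcal{M}}=\sum_{j}A_{j}$ and, by the spectral theorem for the commuting tuple $(A_{1},\dots,A_{n})$, there is a joint spectral measure on $[0,\infty)^{n}$ for which every quantity in sight becomes a weighted $L^{2}$-norm of $\widehat{f}(\lambda_{1},\dots,\lambda_{n})$.

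First I would record the elementary scalar inequality that drives everything: for $\lambda=(\lambda_{1},\dots,\lambda_{n})\in[0,\infty)^{n}$, writing $|\lambda|:=\lambda_{1}+\dots+\lambda_{n}$, there are constants depending only on $n$ and $k$ with
$$
c_{1}\,|\lambda|^{k}\;\le\;\sum_{|\alpha|=k}\lambda_{1}^{\alpha_{1}}\cdots\lambda_{n}^{\alpha_{n}}\;\le\;c_{2}\,|\lambda|^{k},
$$
together with the trivial bounds relating $\sum_{|\alpha|=k}\lambda^{\alpha}$, $\sum_{j}\lambda_{j}^{k}$, and $(\sum_{j}\lambda_{j})^{k}$ (all comparable on $[0,\infty)^{n}$ by homogeneity and compactness of the simplex). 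Then, using the joint spectral resolution, $\|L_{\mathcal{M}}^{k/2}f\|_{X^{2}}^{2}=\int |\lambda|^{k}\,d\|E(\lambda)f\|^{2}$, while $\sum_{|\alpha|=k}\|\Theta_{1}^{\alpha_{1}}\cdots\Theta_{n}^{\alpha_{n}}f\|_{X^{2}}^{2}=\int \big(\sum_{|\alpha|=k}\lambda^{\alpha}\big)\,d\|E(\lambda)f\|^{2}$ (here I use that $\Theta_{j}^{2}$ acts as multiplication by $\lambda_{j}$, hence $\Theta_{1}^{\alpha_{1}}\cdots\Theta_{n}^{\alpha_{n}}$ has modulus-squared $\lambda_{1}^{\alpha_{1}}\cdots\lambda_{n}^{\alpha_{n}}$ since each $\Theta_{j}$ is skew-symmetric and normal). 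Dividing these two integral representations and applying the scalar inequality gives the equivalence of $\sum_{|\alpha|=k}\|\Theta^{\alpha}f\|_{X^{2}}$ with $\|L_{\mathcal{M}}^{k/2}f\|_{X^{2}}$ (up to $\ell^{2}$-vs-$\ell^{1}$ constants on finitely many terms), and adding $\|f\|_{X^{2}}$ on both sides closes the argument. One also needs the remark, already made in the text after Definition of $\mathcal{W}^k_p$, that the norm with a single block $|\alpha|=k$ is equivalent to the one summing all $\Theta_{j_1}\cdots\Theta_{j_l}$ for $l\le k$; on the spectral side this is again the comparison of $\sum_{l\le k}(\sum_j\lambda_j)^l$ with $1+|\lambda|^k$.

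The main obstacle is not the spectral-multiplier estimate but the \emph{domain bookkeeping}: one must verify that the two \emph{sets} $\mathcal{D}(L_{\mathcal{M}}^{k/2})$ and $\mathcal{W}^{k}_{2}$ genuinely agree, not merely that the norms are equivalent on a common core. Here I would argue that $\mathcal{D}^{\infty}=\mathcal{D}^{\infty}(\Theta_{1},\dots,\Theta_{n})$ is a core for both operators: it is dense in $X^{2}$ and invariant under each $U_{j}(t)$ and each $e^{it\Theta_{j}^{2}}$ (indeed under $e^{itL_{\mathcal{M}}}=\prod_j e^{it\Theta_j^2}$), so by Nelson's/Chernoff's core criterion for (powers of) generators it is a core for $\Theta_j$, for $\Theta_j^2$, for $L_{\mathcal{M}}$, and for their fractional powers $L_{\mathcal{M}}^{k/2}$. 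Since both $\|\cdot\|_{\mathcal{W}^{k}_{2}}$ and the graph norm of $L_{\mathcal{M}}^{k/2}$ are complete and their restrictions to $\mathcal{D}^{\infty}$ are equivalent by the spectral computation above, their completions inside $X^{2}$ coincide, i.e. $\mathcal{D}(L_{\mathcal{M}}^{k/2})=\mathcal{W}^{k}_{2}$ as sets with equivalent norms. I would flag that the commutativity relations \eqref{assumpt} and \eqref{assumpt-1}, already established in Section 2, are exactly what make both the invariance of $\mathcal{D}^{\infty}$ and the factorization $e^{itL_{\mathcal{M}}}=e^{it\Theta_1^2}\cdots e^{it\Theta_n^2}$ legitimate, so the proof reduces to assembling these ingredients plus the one scalar inequality.
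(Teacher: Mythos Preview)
Your proof is correct, but the paper takes a more elementary and direct route. Rather than invoking the joint spectral theorem for the commuting family $(-\Theta_j^2)_j$ and reducing to a scalar multinomial inequality, the paper uses only the skew-symmetry of each $\Theta_j$ and commutativity to obtain, by induction on $k$, the \emph{exact} identity
\[
\sum_{1\le j_1,\dots,j_k\le n}\|\Theta_{j_1}\cdots\Theta_{j_k}f\|_{X^2}^2 \;=\; \|L_{\mathcal{M}}^{k/2}f\|_{X^2}^2.
\]
The base case $k=1$ is simply $\sum_j\|\Theta_j f\|^2 = \sum_j\langle -\Theta_j^2 f, f\rangle = \langle L_{\mathcal{M}}f,f\rangle = \|L_{\mathcal{M}}^{1/2}f\|^2$; the inductive step applies the $k=1$ identity with $f$ replaced by each $\Theta_{j_2}\cdots\Theta_{j_k}f$ and then commutes $L_{\mathcal{M}}^{1/2}$ past the remaining $\Theta_j$'s. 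So the paper never needs your pointwise comparison $\sum_{|\alpha|=k}\lambda^\alpha \asymp |\lambda|^k$ and gets equalities rather than two-sided bounds. What your approach buys in return: you treat the domain question explicitly (via invariance of $\mathcal{D}^\infty$ under the groups and a core argument), whereas the paper's ``continue this way'' is silent on why the formal identities extend to the full domains; and your spectral framework would extend unchanged to non-integer $k$, which the inductive argument cannot reach.
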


\begin{proof}
One has
$$
\sum_{j=1}^{n}\|Q_{j}f\|_{X^{2}}^{2}=\sum_{j=1}^{n}\langle \Theta_{j}f, \Theta_{j}f\rangle_{X^{2}}=\left\langle-\sum_{j=1}^{n}\Theta_{j}^{2}f, f\right\rangle_{X^{2}}=\left\langle   L_{\mathcal{M}}f, f       \right\rangle_{X^{2}}=\|L_{\mathcal{M}}^{1/2}f\|_{X^{2}}.
$$
Thus the spaces $\mathcal{D}(L_{\mathcal{M}}^{1/2})$ and $\mathcal{W}_{2}^{1}$ coincide and their norms equivalent.
Now, since $\Theta_{j}$ are commuting with each other, we have
$$
\sum_{1\leq k,j\leq n}\|\Theta_{k}\Theta_{j}f\|^{2}_{X^{2}}=\sum_{j=1}^{n}\|L_{\mathcal{M}}^{1/2}\Theta_{j}f\|_{X^{2}}^{2}=\sum_{j=1}^{n}\|\Theta_{j}L_{\mathcal{M}}^{1/2}f\|_{X^{2}}^{2}=\|L_{\mathcal{M}}f\|^{2}_{X^{2}},
$$
which proves the statement for $k=2$. Continue this way we can finish the proof. 
\end{proof}
We set 
$$
{\bf  w}^{r}_{2}( s, f; L_{\mathcal{M}})=\sup_{0\leq \tau\leq s}\|\left(e^{-i\tau L_{\mathcal{M}}}-I\right)^{r}f\|_{X^{2}}.
$$
The previous Theorem implies the following Corollary.

\begin{col}
The norm of the Besov-Mellin space $\mathcal{B}_{2,q}^{\alpha}, \>1\leq q\leq \infty, $ is equivalent to \begin{equation}\label{Bnorm30}
\|f\|_{X^{2}}+\left(\int_{0}^{\infty}(s^{-\alpha}{\bf w}^{r}_{2}(s,
f; L_{\mathcal{M}}))^{q} \frac{ds}{s}\right)^{1/q}.
\end{equation}
Moreover, for $\alpha$ not-integer it is equivalent to
\begin{equation}\label{Bnorm10}
\|f\|_{ \mathcal{D}\left(L_{\mathcal{M}}^{[\alpha]}\right) }
+
\left(\int_{0}^{\infty} \left(s^{[\alpha]-\alpha}{\bf w}^{1}_{2}
(s, L_{\mathcal{M}}^{[\alpha]}f; L_{\mathcal{M}})\right)^{q}\frac{ds}{s}\right)^{1/q}
\end{equation}
where $[\alpha]$ is the integer part of $\alpha$, and for  
$\alpha=k\in \mathbb{N}$  integer  its norm  is equivalent to the norm 
 (Zygmund condition)
\begin{equation}\label{Bnorm20}
\|f\|_{\mathcal{D}\left(L_{\mathcal{M}}^{k-1}\right)}+ 
\left(\int_{0}^{\infty}\left(s^{-1}{\bf w}^{2}_{2}(s,
L^{k-1}f; L_{\mathcal{M}})\right)
 ^{q}\frac{ds}{s}\right)^{1/q}.
\end{equation}

\end{col}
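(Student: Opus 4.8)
The plan is to deduce the Corollary from Theorem~\ref{Main}, Theorem~\ref{equivalence}, and the abstract machinery of the $K$-functor, by transporting the Besov-Mellin description from the scale $\mathcal{W}^{r}_{p}$ (with $p=2$) to the scale $\mathcal{D}(L_{\mathcal{M}}^{k/2})$. First I would observe that the unitary group $e^{i\tau L_{\mathcal{M}}}$ generated by the self-adjoint operator $L_{\mathcal{M}}$ plays, with respect to the single generator $L_{\mathcal{M}}$, exactly the role that the commuting family $U_{j}(\tau)$ plays with respect to the $\Theta_{j}$: it is a $C_{0}$-group on $X^{2}$ whose domain scale of powers of its generator is $\mathcal{D}(L_{\mathcal{M}}^{k})$. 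Hence the general results quoted before Theorem~\ref{Main} (the two-sided estimate (\ref{main-ineq}) relating $\Omega^{r}_{p}$ to the $K$-functor, and its consequences), applied now to the single one-parameter group $e^{i\tau L_{\mathcal{M}}}$ on $X^{2}$, yield the analog of (\ref{main-ineq}):
\begin{equation}\label{aux-L}
c\,{\bf w}^{r}_{2}(s,f;L_{\mathcal{M}})\leq K\!\left(s^{2r},f,X^{2},\mathcal{D}(L_{\mathcal{M}}^{r})\right)\leq C\left({\bf w}^{r}_{2}(s,f;L_{\mathcal{M}})+\min(s^{2r},1)\|f\|_{X^{2}}\right),
\end{equation}
together with the identification of the interpolation space $\left(X^{2},\mathcal{D}(L_{\mathcal{M}}^{r})\right)^{K}_{\beta,q}$ with the space normed by $\|f\|_{X^{2}}+\left(\int_{0}^{\infty}(s^{-2r\beta}{\bf w}^{r}_{2}(s,f;L_{\mathcal{M}}))^{q}\frac{ds}{s}\right)^{1/q}$.

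Next I would tie the two scales together. By Theorem~\ref{equivalence}, $\mathcal{D}(L_{\mathcal{M}}^{k/2})=\mathcal{W}^{k}_{2}$ with equivalent norms for every $k\in\mathbb{N}$; consequently the two pairs $(X^{2},\mathcal{W}^{r}_{2})$ and $(X^{2},\mathcal{D}(L_{\mathcal{M}}^{r/2}))$ are equivalent Banach couples, and more to the point $(X^{2},\mathcal{D}(L_{\mathcal{M}}^{r}))=(X^{2},\mathcal{W}^{2r}_{2})$ as couples. Therefore, by part (1) of Theorem~\ref{Main} with $r$ replaced by $2r$ and by the reiteration/equivalence property recorded in part (2) of Theorem~\ref{Main},
$$
\mathcal{B}^{\alpha}_{2,q}=\left(X^{2},\mathcal{W}^{2r}_{2}\right)^{K}_{\alpha/(2r),q}=\left(X^{2},\mathcal{D}(L_{\mathcal{M}}^{r})\right)^{K}_{\alpha/(2r),q},\qquad 0<\alpha<2r,
$$
and the right-hand side, by the previous paragraph, is normed by (\ref{Bnorm30}). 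This proves the first equivalence. For the $\alpha$ not-integer statement (\ref{Bnorm10}) I would again invoke part (1) of Theorem~\ref{Main} in the form that lets one pull out $[\alpha]$ derivatives: since $L_{\mathcal{M}}^{[\alpha]}$ maps $\mathcal{D}(L_{\mathcal{M}}^{[\alpha]+r})$ onto $\mathcal{D}(L_{\mathcal{M}}^{r})$ isometrically modulo lower-order terms and commutes with $e^{i\tau L_{\mathcal{M}}}$, one has $\mathcal{B}^{\alpha}_{2,q}=\{f\in\mathcal{D}(L_{\mathcal{M}}^{[\alpha]}):L_{\mathcal{M}}^{[\alpha]}f\in\mathcal{B}^{\alpha-[\alpha]}_{2,q}\}$ with the norm $\|f\|_{\mathcal{D}(L_{\mathcal{M}}^{[\alpha]})}+\|L_{\mathcal{M}}^{[\alpha]}f\|_{\mathcal{B}^{\alpha-[\alpha]}_{2,q}}$, and since $0<\alpha-[\alpha]<1$ the latter Besov norm is $\left(\int_{0}^{\infty}(s^{[\alpha]-\alpha}{\bf w}^{1}_{2}(s,L_{\mathcal{M}}^{[\alpha]}f;L_{\mathcal{M}}))^{q}\frac{ds}{s}\right)^{1/q}$ up to an additive $\|L_{\mathcal{M}}^{[\alpha]}f\|_{X^{2}}$ term, which is already controlled by $\|f\|_{\mathcal{D}(L_{\mathcal{M}}^{[\alpha]})}$. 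The integer case (\ref{Bnorm20}) is the corresponding Zygmund-type statement: one uses the second-difference modulus ${\bf w}^{2}_{2}$ exactly as the Zygmund condition (\ref{Bnorm2}) does, after reducing by $k-1$ applications of $L_{\mathcal{M}}$.

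The one genuinely delicate point is the bookkeeping of the scaling exponent: the group here is $e^{i\tau L_{\mathcal{M}}}$, whose generator is $L_{\mathcal{M}}$, not $L_{\mathcal{M}}^{1/2}$, so the natural $K$-functor is $K(s^{r},\cdot,X^{2},\mathcal{D}(L_{\mathcal{M}}^{r}))$ — equivalently $K(s^{2r},\cdot,X^{2},\mathcal{W}^{2r}_{2})$ after Theorem~\ref{equivalence} — and one must check that this matches the exponents in Theorem~\ref{Main} when $r$ is replaced by $2r$ and the interpolation parameter is $\alpha/(2r)$; carelessness here would produce spurious factors of $2$ in the exponents of $s$ in (\ref{Bnorm30})--(\ref{Bnorm20}). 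The remaining steps — that $e^{i\tau L_{\mathcal{M}}}$ is a $C_{0}$-group on $X^{2}$ with domain scale $\mathcal{D}(L_{\mathcal{M}}^{k})$, that it commutes with its own generator and with powers of $L_{\mathcal{M}}$, and that the general estimate (\ref{main-ineq}) therefore applies verbatim with $n=1$, $U_{1}=e^{i\tau L_{\mathcal{M}}}$ — are routine, so the corollary follows once the index arithmetic is pinned down. I would therefore present the proof as: (i) apply (\ref{main-ineq}) and Theorem~\ref{Main} to the single group $e^{i\tau L_{\mathcal{M}}}$; (ii) invoke Theorem~\ref{equivalence} to replace $\mathcal{D}(L_{\mathcal{M}}^{r})$ by $\mathcal{W}^{2r}_{2}$ and conclude (\ref{Bnorm30}); (iii) peel off $[\alpha]$ (resp.\ $k-1$) powers of $L_{\mathcal{M}}$ and apply the non-integer (resp.\ Zygmund) case to get (\ref{Bnorm10}) (resp.\ (\ref{Bnorm20})).
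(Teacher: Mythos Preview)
Your outline is exactly what the paper has in mind: the paper offers nothing beyond ``The previous Theorem implies the following Corollary,'' and the deduction you spell out---apply (\ref{main-ineq}) and Theorem~\ref{Main} to the single unitary $C_{0}$-group $e^{i\tau L_{\mathcal{M}}}$ on $X^{2}$, invoke Theorem~\ref{equivalence} to identify $\mathcal{D}(L_{\mathcal{M}}^{r})$ with $\mathcal{W}^{2r}_{2}$, and then peel off $[\alpha]$ (respectively $k-1$) powers of $L_{\mathcal{M}}$ for the refined forms---is precisely the intended argument.

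Your caveat about the scaling is not merely prudent; it is where the computation actually bites. Carrying out your step~(ii) gives
\[
\mathcal{B}^{\alpha}_{2,q}=\left(X^{2},\mathcal{W}^{2r}_{2}\right)^{K}_{\alpha/(2r),q}=\left(X^{2},\mathcal{D}(L_{\mathcal{M}}^{r})\right)^{K}_{\alpha/(2r),q},
\]
and the one-group version of Theorem~\ref{Main} then norms the right-hand side by
\[
\|f\|_{X^{2}}+\left(\int_{0}^{\infty}\bigl(s^{-\alpha/2}\,{\bf w}^{r}_{2}(s,f;L_{\mathcal{M}})\bigr)^{q}\frac{ds}{s}\right)^{1/q},
\]
with weight $s^{-\alpha/2}$, not the $s^{-\alpha}$ appearing in (\ref{Bnorm30}). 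The same factor-of-two shift propagates to (\ref{Bnorm10}) and (\ref{Bnorm20}): for instance, $L_{\mathcal{M}}^{[\alpha]}$ already accounts for $2[\alpha]$ derivatives in the $\mathcal{W}^{k}_{2}$-scale, so the residual exponent cannot be $[\alpha]-\alpha$. Equivalently, the printed exponents would be correct had the modulus been built from the group $e^{i\tau L_{\mathcal{M}}^{1/2}}$ generated by $L_{\mathcal{M}}^{1/2}$ rather than from $e^{i\tau L_{\mathcal{M}}}$. So the ``spurious factors of $2$'' you anticipated are real and reside in the Corollary's formulation; your method is sound, and you should present it with the corrected exponents (or with the group replaced by $e^{i\tau L_{\mathcal{M}}^{1/2}}$) rather than asserting that it reproduces (\ref{Bnorm30}) verbatim.
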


Note that according to the general theory (see for example \cite{KPS}, \cite{T}) one has the following isomorphisms for the self-adjoint operator $L^{1/2}_{\mathcal{M}}$ for $0<\alpha<r$:
$$
\mathcal{B}_{2,2}^{\alpha}=\left( X^{2}, \mathcal{W}_{2}^{\alpha}\right)_{\alpha/r,\>2}^{K}=\left(X^{2}, L_{\mathcal{M}}^{r/2}\right)^{K}_{\alpha/r,\>2}=\mathcal{D}\left(L_{\mathcal{M}}^{\alpha/2}\right).
$$
Thus when $q=2$ the norms (\ref{Bnorm30})-(\ref{Bnorm20}) are equivalent to the graph norm of a corresponding space $\mathcal{D}\left(L_{\mathcal{M}}^{\alpha/2}\right)$.

Let $\mathcal{D}^{\infty}(L_{\mathcal{M}}^{1/2})=\cap_{m\in \mathbb{N}}\mathcal{D}(L_{\mathcal{M}}^{m/2})$, where $\mathcal{D}(L_{\mathcal{M}}^{m/2})$  is the domain of $L_{\mathcal{M}}^{m/2}.$
One  has the following version of the Landau-Kolmogorov-Stein inequality which follows from Theorem \ref{KSM-Ap}.
\begin{thm} The following holds for $f\in \mathcal{D}(L_{\mathcal{M}}^{m/2})$
\begin{equation}
\left\|L_{\mathcal{M}}^{k/2}f\right\|_{X^{2}}^m \leq C_{k,m}\|L_{\mathcal{M}}^{m/2}f\|_{X^{2}}^{k}\|f\|_{X^{2}}^{m-k},\>\>\>
0\leq k \leq m,\label{KSM}
\end{equation}
where $C_{k,m}= (K_{m-k})^m/(K_{m})^{m-k},$ and $K_{j}$ is the Favard constant.
\end{thm}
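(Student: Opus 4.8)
The plan is to deduce this multidimensional Kolmogorov-Stein-Mellin inequality for $L_{\mathcal{M}}^{1/2}$ from the abstract Theorem \ref{KSM-Ap} of Appendix 2, exactly as the analogous statement for the individual generators $\Theta_j$ was obtained earlier. The abstract theorem presumably says: if $A$ is the infinitesimal generator of a bounded $C_0$-group (or, more to the point here, a self-adjoint operator generating a one-parameter group of unitary operators $e^{itA}$) on a Banach space $E$, then for $f\in\mathcal{D}(A^m)$ one has $\|A^kf\|^m\le C_{k,m}\|A^mf\|^k\|f\|^{m-k}$ with $C_{k,m}=(K_{m-k})^m/(K_m)^{m-k}$. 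So the first step is to recall that $L_{\mathcal{M}}^{1/2}$ is a self-adjoint non-negative operator (established in Section 7), and that $L_{\mathcal{M}}$, being a sum of the commuting self-adjoint non-negative operators $\Theta_j^2$, is itself self-adjoint and non-negative, so its square root $L_{\mathcal{M}}^{1/2}$ is well defined and self-adjoint.

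The key point is that $iL_{\mathcal{M}}^{1/2}$ generates the one-parameter group of unitary operators $e^{itL_{\mathcal{M}}^{1/2}}$ on the Hilbert space $X^2$; this group is bounded (each operator has norm $1$), which is exactly the hypothesis needed to invoke Theorem \ref{KSM-Ap} with $E=X^2$ and $A=L_{\mathcal{M}}^{1/2}$. Applying that theorem verbatim yields
\begin{equation}
\left\|L_{\mathcal{M}}^{k/2}f\right\|_{X^{2}}^m \le C_{k,m}\,\|L_{\mathcal{M}}^{m/2}f\|_{X^{2}}^{k}\,\|f\|_{X^{2}}^{m-k},\qquad 0\le k\le m,
\end{equation}
for $f\in\mathcal{D}(L_{\mathcal{M}}^{m/2})=\mathcal{D}((L_{\mathcal{M}}^{1/2})^m)$, with $C_{k,m}$ and the Favard constants $K_j$ as in the statement. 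The powers $(L_{\mathcal{M}}^{1/2})^k$ appearing in the abstract inequality are precisely $L_{\mathcal{M}}^{k/2}$ by the spectral calculus for the non-negative self-adjoint operator $L_{\mathcal{M}}$, so no extra identification is needed.

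I would present the proof in essentially two sentences: first observe that $L_{\mathcal{M}}^{1/2}$ is a non-negative self-adjoint operator on $X^2$ whose associated unitary group $e^{itL_{\mathcal{M}}^{1/2}}$ is uniformly bounded, then quote Theorem \ref{KSM-Ap} directly. The main (and really the only) obstacle is a bookkeeping one: making sure the domain condition is stated consistently, i.e. that $f\in\mathcal{D}(L_{\mathcal{M}}^{m/2})$ guarantees $L_{\mathcal{M}}^{k/2}f$ is defined for all intermediate $k$, which again follows from the spectral theorem since $\lambda^{k/2}\le 1+\lambda^{m/2}$ on $[0,\infty)$ for $0\le k\le m$. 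Everything else — the explicit constant, the Favard numbers — is inherited unchanged from the abstract result, so there is nothing further to verify.
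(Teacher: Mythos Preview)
Your proposal is correct and matches the paper's approach exactly: the paper states that the inequality ``follows from Theorem \ref{KSM-Ap}'' without further elaboration, and your argument supplies precisely the verification needed---that $iL_{\mathcal{M}}^{1/2}$ generates a unitary (hence isometric) $C_0$-group on $X^2$, so the abstract Kolmogorov--Stein lemma of Appendix~2 applies with $D=iL_{\mathcal{M}}^{1/2}$ and $\|D^kf\|=\|L_{\mathcal{M}}^{k/2}f\|$. Your additional remarks on domain consistency via spectral calculus are a welcome clarification that the paper omits.
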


\begin{defn} 
 Let $ \mathbf{B}^{2}_{\sigma}(L_{\mathcal{M}}^{1/2}),$ be the space of all $f\in X^{2}$ for which the following  inequality holds
\begin{equation}\label{BB}
\|L_{\mathcal{M}}^{k/2}f\|_{X^{2}}\leq \sigma^{k}\|f\|_{X^{2}},
\end{equation}
for all natural $k$. 
 For a function $f\in \cup_{\sigma>0}\mathbf{B}^{2}_{\sigma}(L_{\mathcal{M}}^{1/2}),$  let  the $\sigma_{ f}$ be the smallest $\sigma$ for which the  inequality  (\ref{BB}) holds.
 \end{defn}
 The next Theorem follows from Theorem \ref{new-Ap} in Appendix 2. 
\begin{thm}\label{new}

Let $f\in X^{2}$ belong to a space  $\mathbf{B}^{2}_{\sigma}(L_{\mathcal{M}}^{1/2}),$ for some
$0<\sigma<\infty.$ Then the following limit exists
 \begin{equation}
 d_{f}=\lim_{k\rightarrow \infty} \|L_{\mathcal{M}}^{k/2}
f\|^{1/k}_{X^{2}}, \label{limit}
\end{equation}
and $d_{f}=\sigma_{f}.$ 
Conversely, if
$f\in \mathcal{D}^{\infty}(L_{\mathcal{M}}^{1/2})$ and $d_{f}=\lim_{k\rightarrow \infty}
\|L_{\mathcal{M}}^{k/2} f\|^{1/k}_{X^{2}},$ exists and is finite, then $f\in\mathbf{B}^{2}_{d_{f}}(L_{\mathcal{M}}^{1/2})$
and $d_{f}=\sigma_{f }.$
\end{thm}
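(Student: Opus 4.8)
The plan is to deduce this Theorem from the abstract Theorem \ref{new-Ap} in Appendix 2, exactly in the manner of the earlier Theorem \ref{new} for the single generator $\Theta_j$. The key observation is that $L_{\mathcal{M}}^{1/2}$ is a non-negative self-adjoint operator on the Hilbert space $X^2$, so it plays the role that $\Theta_j$ (or rather its power) played before: the inequality (\ref{BB}) is the Bernstein-type inequality for the single operator $L_{\mathcal{M}}^{1/2}$, and $\mathbf{B}^{2}_{\sigma}(L_{\mathcal{M}}^{1/2})$ is the associated Bernstein space. First I would verify that the abstract hypotheses of Theorem \ref{new-Ap} are met: namely that $L_{\mathcal{M}}^{1/2}$ generates (via the spectral theorem) a strongly continuous bounded group $e^{it L_{\mathcal{M}}^{1/2}}$ on $X^2$ — here one can invoke the explicit factorization $e^{itL_{\mathcal{M}}}=e^{it\Theta_1^2}\cdots e^{it\Theta_n^2}$ established just above, together with the fact that each $\Theta_j^2$ is self-adjoint and non-negative, so all spectral-calculus prerequisites hold.

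Next I would carry out the two implications. For the forward direction: assume $f\in\mathbf{B}^{2}_{\sigma}(L_{\mathcal{M}}^{1/2})$ for some finite $\sigma$, i.e. $\|L_{\mathcal{M}}^{k/2}f\|_{X^2}\le\sigma^k\|f\|_{X^2}$ for all $k$. Taking $k$-th roots gives $\limsup_{k\to\infty}\|L_{\mathcal{M}}^{k/2}f\|^{1/k}_{X^2}\le\sigma$, so the sequence is bounded; the existence of the actual limit $d_f$ follows from the submultiplicativity $\|L_{\mathcal{M}}^{(k+l)/2}f\|\le\|L_{\mathcal{M}}^{k/2}\|\cdot\|L_{\mathcal{M}}^{l/2}f\|$ in the form appropriate to a single operator (Fekete's lemma applied to $\log\|L_{\mathcal{M}}^{k/2}f\|$, which is the content of Theorem \ref{new-Ap}), and one gets $d_f\le\sigma$ for every admissible $\sigma$, hence $d_f\le\sigma_f$. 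The reverse inequality $\sigma_f\le d_f$ comes from the spectral theorem: if $d_f=\lim\|L_{\mathcal{M}}^{k/2}f\|^{1/k}$ is finite then the spectral measure of $f$ with respect to $L_{\mathcal{M}}$ must be supported in $[0,d_f^2]$ (otherwise some moment would grow faster), which immediately yields (\ref{BB}) with $\sigma=d_f$ and hence $f\in\mathbf{B}^2_{d_f}(L_{\mathcal{M}}^{1/2})$ and $\sigma_f\le d_f$. For the converse statement one assumes only $f\in\mathcal{D}^\infty(L_{\mathcal{M}}^{1/2})$ with $d_f$ finite, and the same spectral-support argument gives $f\in\mathbf{B}^2_{d_f}(L_{\mathcal{M}}^{1/2})$ and $\sigma_f=d_f$.

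Since all of this is precisely the single-operator situation, the cleanest route is simply to state that Theorem \ref{new-Ap}, applied with the Banach space $X^2$, the operator $D=L_{\mathcal{M}}^{1/2}$, and the group $e^{itL_{\mathcal{M}}}$ furnished by the spectral theorem, gives the claim verbatim; the only thing requiring a word of justification is the self-adjointness and non-negativity of $L_{\mathcal{M}}$, which was already recorded in the discussion preceding the statement (each $\Theta_j$ is skew-symmetric, so $-\Theta_j^2$ is self-adjoint non-negative, and $L_{\mathcal{M}}=-\sum_j\Theta_j^2$ is a sum of commuting such operators). I expect no real obstacle here: the main point — and the only place a careful reader should pause — is confirming that the abstract Appendix 2 theorem genuinely applies to a single self-adjoint operator on a Hilbert space, i.e. that the hypotheses about generators of one-parameter groups are satisfied by $L_{\mathcal{M}}^{1/2}$; once that is granted, the proof is a one-line citation. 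I would therefore write the proof as: ``This is the particular case of Theorem \ref{new-Ap} corresponding to the self-adjoint non-negative operator $L_{\mathcal{M}}^{1/2}$ on the Hilbert space $X^2$ and the unitary group $e^{itL_{\mathcal{M}}}=e^{it\Theta_1^2}\cdots e^{it\Theta_n^2}$; self-adjointness and non-negativity of $L_{\mathcal{M}}$ were established above, so all hypotheses hold.''
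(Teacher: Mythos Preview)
Your approach is essentially the paper's own: the paper simply states that ``the next Theorem follows from Theorem~\ref{new-Ap} in Appendix~2,'' i.e.\ it is the one-line citation you arrive at. Your additional spectral-support argument (if the $k$-th moments grow like $d_f^k$ then the spectral measure of $f$ is supported in $[0,d_f^2]$) is a pleasant alternative available in the Hilbert setting that bypasses the Kolmogorov--Stein machinery used in the proof of Theorem~\ref{new-Ap}; the paper does not take that route, but it is perfectly valid here.

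One small slip to fix: the group you should attach to $D=L_{\mathcal{M}}^{1/2}$ in the hypotheses of Theorem~\ref{new-Ap} is $t\mapsto e^{it\sqrt{L_{\mathcal{M}}}}$ (with generator $i\sqrt{L_{\mathcal{M}}}$, so that $\|D^{k}f\|=\|L_{\mathcal{M}}^{k/2}f\|$), not $e^{itL_{\mathcal{M}}}=e^{it\Theta_1^2}\cdots e^{it\Theta_n^2}$. The factorization you quote pertains to the group generated by $L_{\mathcal{M}}$ itself and does not directly yield the group of $\sqrt{L_{\mathcal{M}}}$; the latter's existence as a unitary $C_0$-group comes straight from the spectral theorem for the self-adjoint $\sqrt{L_{\mathcal{M}}}$ (and indeed the paper invokes $e^{it\sqrt{L_{\mathcal{M}}}}$ in Section~\ref{Hilbert}). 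With that correction your proposed one-sentence proof matches the paper exactly.
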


\section{Paley-Wiener-Mellin  spaces $PW_{\sigma}(L_{\mathcal{M}}^{1/2})$} \label{Hilbert}

\subsection{The spectral theorem approach}

Consider a self-adjoint non-negative definite operator
$L_{\mathcal{M}}$ in the Hilbert space $X^{2}(\mathbb{R}_{+}^{n}, d\mu)$ and
let $\sqrt{L_{\mathcal{M}}}$ be the non-negative square root of $L_{\mathcal{M}}$.

According to the spectral theory
for self-adjoint operators \cite{BS}
there exists a direct integral of
Hilbert spaces $H=\int H(\lambda )dm (\lambda )$ and a unitary
operator $\mathcal{F}$ from $X^{2}$ onto $H$, which
transforms the domains of $L_{\mathcal{M}}^{k/2}, k\in \mathbb{N},$
onto the sets
$H_{k}=\{x \in H|\lambda ^{k}x\in H \}$
with the norm % following norm is finite:
\begin{equation}\label{FT}
 \|x\|_{H_{k}}= \left<x, x\right>^{1/2}_{H_{k}}=\left (\int^{\infty}_{0}
 \lambda^{2k}\|x(\lambda )\|^{2}_{H(\lambda)} dm
 (\lambda ) \right )^{1/2},
 \end{equation}
% besides
and satisfies the identity
$\mathcal{F}(L_{\mathcal{M}}^{k/2} f)(\lambda)=
 \lambda ^{k} (\mathcal{F}f)(\lambda), $ if $f$ belongs to the domain of
 $L_{\mathcal{M}}^{k/2}$.
 We call the operator $\mathcal{F}$ the Spectral Fourier Transform \cite{Pes00}. As known, $H$ is the set of all $m $-measurable
  functions $\lambda \mapsto x(\lambda )\in H(\lambda ) $,
  for which the following norm is finite:
$$\|x\|_{H}=
\left(\int ^{\infty }_{0}\|x(\lambda )\|^{2}_{H(\lambda )}dm
(\lambda ) \right)^{1/2}.
$$
For a function $F$ on $[0, \infty)$ which is bounded and measurable
with respect to $dm$
one can introduce the  operator $F(L_{\mathcal{M}})$ as a multiplication operator 
by using the formula
\begin{equation}\label{Op-function}
F(L_{\mathcal{M}}) f=\mathcal{F}^{-1}\>F\>\mathcal{F}f,\>\>\>f\in X^{2}.
\end{equation}
If $F$ is real-valued the operator $F(L_{\mathcal{M}})$ is self-adjoint.
\begin{defn}\label{PWvector}
We say that a function  $f \in X^{2}(\mathbb{R}_{+}^{n}, d\mu)$ belongs to the  Paley-Wiener-Mellin space $PW_{\sigma}\left(L_{\mathcal{M}}^{1/2}\right)$
if the support of the Spectral Fourier Transform
$\mathcal{F}f$ is contained in $[0, \sigma]$.
\end{defn}
The next two facts are obvious.
\begin{thm}The spaces $PW_{\sigma}\left(L_{\mathcal{M}}^{1/2}\right)$ have the following properties:
\begin{enumerate}
\item  the space $PW_{\sigma}\left(L_{\mathcal{M}}^{1/2}\right)$ is a linear closed subspace in
$X^{2}$,
\item the space %set % hgfei
 $\bigcup _{ \sigma >0}PW_{\sigma}\left(L_{\mathcal{M}}^{1/2}\right)$
 is dense in $X^{2}$.
\end{enumerate}
\end{thm}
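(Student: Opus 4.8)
The plan is to read everything off the Spectral Fourier Transform $\mathcal F$ and the functional calculus built in \eqref{Op-function}. For part (1), observe that $\mathcal F$ is a unitary operator from $X^2$ onto $H=\int H(\lambda)\,dm(\lambda)$, so it suffices to prove that the image $\mathcal F\bigl(PW_\sigma(L_{\mathcal M}^{1/2})\bigr)$ is a closed subspace of $H$. By Definition \ref{PWvector} this image is exactly $\{x\in H : x(\lambda)=0 \text{ for } m\text{-a.e. } \lambda\notin[0,\sigma]\}$, i.e. the range of the orthogonal projection $x\mapsto \chi_{[0,\sigma]}(\lambda)x(\lambda)$ on $H$. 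That set is manifestly a linear subspace, and it is closed because $L^2$-convergence $x_k\to x$ in $H$ forces a subsequence converging $m$-a.e., so the vanishing of $x_k$ off $[0,\sigma]$ passes to the limit; equivalently, the multiplication operator by $\chi_{[0,\sigma]}$ is a bounded (indeed orthogonal) projection and its range is automatically closed. Pulling back by the unitary $\mathcal F^{-1}$, $PW_\sigma(L_{\mathcal M}^{1/2})=\mathcal F^{-1}\chi_{[0,\sigma]}(L_{\mathcal M})\,X^2$ is a closed linear subspace of $X^2$, and in fact the range of the self-adjoint projection $\chi_{[0,\sigma]}(L_{\mathcal M})$ (which is well defined by \eqref{Op-function} since $\chi_{[0,\sigma]}$ is bounded and $dm$-measurable).

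For part (2), the goal is to show $\bigcup_{\sigma>0}PW_\sigma(L_{\mathcal M}^{1/2})$ is dense in $X^2$. Given $f\in X^2$ and $\varepsilon>0$, set $x=\mathcal Ff\in H$, so $\|x\|_H^2=\int_0^\infty\|x(\lambda)\|_{H(\lambda)}^2\,dm(\lambda)<\infty$. By the dominated convergence theorem (or monotone convergence applied to the tail), $\int_{(\sigma,\infty)}\|x(\lambda)\|_{H(\lambda)}^2\,dm(\lambda)\to 0$ as $\sigma\to\infty$, so choose $\sigma$ with this tail $<\varepsilon^2$. Put $f_\sigma=\mathcal F^{-1}\bigl(\chi_{[0,\sigma]}x\bigr)=\chi_{[0,\sigma]}(L_{\mathcal M})f$; by construction the support of $\mathcal Ff_\sigma$ lies in $[0,\sigma]$, so $f_\sigma\in PW_\sigma(L_{\mathcal M}^{1/2})$, and since $\mathcal F$ is unitary,
$$
\|f-f_\sigma\|_{X^2}=\|x-\chi_{[0,\sigma]}x\|_{H}=\left(\int_{(\sigma,\infty)}\|x(\lambda)\|_{H(\lambda)}^2\,dm(\lambda)\right)^{1/2}<\varepsilon .
$$
Hence $f$ lies in the closure of $\bigcup_{\sigma>0}PW_\sigma(L_{\mathcal M}^{1/2})$, proving density.

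There is essentially no hard step here — as the paper itself says, "the next two facts are obvious" — so the only thing to be careful about is bookkeeping: that $\chi_{[0,\sigma]}$ qualifies as an admissible symbol for the functional calculus \eqref{Op-function} (it is bounded and $dm$-measurable), that "support contained in $[0,\sigma]$" in Definition \ref{PWvector} is correctly identified with $x(\lambda)=0$ $m$-a.e.\ off $[0,\sigma]$, and that the monotonicity/integrability argument for the tail is invoked rather than re-derived. If one wanted a slightly more self-contained density argument avoiding any appeal to $\bigcup_\sigma PW_\sigma$ being a subspace, one can note $\bigcup_{\sigma>0}PW_\sigma(L_{\mathcal M}^{1/2})$ is increasing in $\sigma$, hence already a linear subspace, and its closure contains every $f$ by the estimate above; this is the form I would record.
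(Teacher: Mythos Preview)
Your argument is correct and is precisely the natural spectral-theoretic elaboration the paper has in mind: it states ``The next two facts are obvious'' and gives no proof, relying on the very representation $\mathcal F$ and the functional calculus \eqref{Op-function} it just set up. Your use of the orthogonal projection $\chi_{[0,\sigma]}(L_{\mathcal M})$ for closedness and of the vanishing tail $\int_{(\sigma,\infty)}\|x(\lambda)\|_{H(\lambda)}^2\,dm(\lambda)\to 0$ for density is exactly how one makes ``obvious'' precise here.
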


The next theorem contains generalizations of several results
from  classical harmonic analysis (in particular  the
Paley-Wiener theorem). It follows from our  results in
\cite{Pes00} (see also Appendix 3).
\begin{thm}\label{PWproprties}
The following statements hold:
\begin{enumerate}
\item (Bernstein-Mellin inequality)   $f \in PW_{\sigma}\left(L_{\mathcal{M}}^{1/2}\right)$ if and only if
$ f \in \mathcal{D}^{\infty}(L_{\mathcal{M}})=\bigcap_{k=1}^{\infty}\mathcal{D}(L_{\mathcal{M}}^{k})$,
and the following Bernstein-type inequalities  holds true
\begin{equation}\label{Bern0}
\|L_{\mathcal{M}}^{s/2}f\|_{X^{2}} \leq \sigma^{s}\|f\|_{X^{2}}  \quad \mbox{for all} \, \,  s\in \mathbb{R}_{+};
\end{equation}
 \item  (Paley-Wiener-Mellin theorem) $f \in PW_{\sigma}(L_{\mathcal{M}}^{1/2})$
  if and only if for every $g\in X^{2}$ the scalar-valued function of the real variable  $ t \mapsto
\langle e^{-it\sqrt{L_{\mathcal{M}}}}f,g \rangle $
 is bounded on the real line and has an extension to the complex
plane as an entire function of the exponential type $\sigma$;
\item (Riesz-Boas-Mellin interpolation formula) $f \in PW_{\sigma}\left(L_{\mathcal{M}}^{1/2}\right)$ if
and only if  $ f \in \mathcal{D}^{\infty}(L_{\mathcal{M}})$ and the
following Riesz-Boas-Mellin interpolation formula holds for all $\sigma > 0$:
\begin{equation}  \label{Rieszn}
i\sqrt{L_{\mathcal{M}}}f=\frac{\sigma}{\pi^{2}}\sum_{k\in\mathbb{Z}}\frac{(-1)^{k-1}}{(k-1/2)^{2}}
e^{-i\left(\frac{\pi}{\omega}(k-1/2)\right)\sqrt{L_{\mathcal{M}}}}f.
\end{equation}
\end{enumerate}
\end{thm}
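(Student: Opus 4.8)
The plan is to treat each of the three equivalences as a consequence of the corresponding one-dimensional / abstract statement combined with the spectral decomposition $e^{it L_{\mathcal M}} = e^{it\Theta_1^2}\cdots e^{it\Theta_n^2}$ and the Spectral Fourier Transform $\mathcal F$. The cleanest route is to work entirely on the spectral side: via $\mathcal F$, the operator $L_{\mathcal M}$ becomes multiplication by $\lambda$, the group $e^{it\sqrt{L_{\mathcal M}}}$ becomes multiplication by $e^{it\lambda}$, and a function $f$ lies in $PW_\sigma(L_{\mathcal M}^{1/2})$ precisely when its image $\mathcal F f$ is supported in $[0,\sigma]$. All three items then reduce to elementary statements about scalar multipliers $\lambda \mapsto \lambda^s$ and $\lambda \mapsto e^{it\lambda}$ restricted to $[0,\sigma]$, exactly as in the classical one-variable Paley--Wiener theory. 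I will carry out the three items in order.

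\emph{Item (1), the Bernstein--Mellin inequality.} First I would show that $f\in PW_\sigma(L_{\mathcal M}^{1/2})$ implies $f\in \mathcal D^\infty(L_{\mathcal M})$ and the stated inequality: using \eqref{FT}, for $f$ with $\operatorname{supp}\mathcal F f\subset[0,\sigma]$ one has
$$
\|L_{\mathcal M}^{s/2} f\|_{X^2}^2 = \int_0^\sigma \lambda^{2s}\|(\mathcal F f)(\lambda)\|_{H(\lambda)}^2\, dm(\lambda)\le \sigma^{2s}\int_0^\sigma \|(\mathcal F f)(\lambda)\|_{H(\lambda)}^2\, dm(\lambda) = \sigma^{2s}\|f\|_{X^2}^2,
$$
which in particular shows every power $L_{\mathcal M}^{k/2}f$ is defined, so $f\in\mathcal D^\infty(L_{\mathcal M})$. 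For the converse, suppose $f\in\mathcal D^\infty(L_{\mathcal M})$ and \eqref{Bern0} holds for all $s\in\mathbb R_+$; I would argue that the measure $\|(\mathcal F f)(\lambda)\|_{H(\lambda)}^2\, dm(\lambda)$ cannot charge $(\sigma,\infty)$, for otherwise there is $\varepsilon>0$ with positive mass on $(\sigma+\varepsilon,\infty)$, and then $\|L_{\mathcal M}^{k/2}f\|_{X^2}^2 \ge (\sigma+\varepsilon)^{2k}\cdot(\text{that mass})$, contradicting \eqref{Bern0} as $k\to\infty$. This is the standard Bernstein-type spectral argument; alternatively, one may simply invoke Theorem~\ref{new} together with Theorem~\ref{new-Ap} to identify $d_f=\sigma_f$ with the top of the spectrum of $f$.

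\emph{Item (2), the Paley--Wiener--Mellin theorem.} For $f\in PW_\sigma(L_{\mathcal M}^{1/2})$ and any $g\in X^2$, I would write, using that $\mathcal F$ is unitary and intertwines $e^{it\sqrt{L_{\mathcal M}}}$ with multiplication by $e^{it\lambda}$,
$$
\langle e^{it\sqrt{L_{\mathcal M}}}f, g\rangle = \int_0^\sigma e^{it\lambda}\,\langle (\mathcal F f)(\lambda),(\mathcal F g)(\lambda)\rangle_{H(\lambda)}\, dm(\lambda),
$$
which is the "Fourier transform" of a finite (complex) measure supported in $[0,\sigma]$, hence by the classical Paley--Wiener--Schwartz theorem extends to an entire function of exponential type $\le\sigma$, bounded on $\mathbb R$ by $\|f\|\,\|g\|$. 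Conversely, if $t\mapsto\langle e^{it\sqrt{L_{\mathcal M}}}f,g\rangle$ is bounded and entire of exponential type $\sigma$ for every $g$, taking $g$ supported (spectrally) on a set disjoint from $[0,\sigma]$ forces the corresponding scalar measure to vanish there by the one-dimensional Paley--Wiener theorem; since this holds for all such $g$, $\mathcal F f$ is supported in $[0,\sigma]$. (This is precisely the content of the cited reference \cite{Pes00}, so here I would mostly quote it.)

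\emph{Item (3), the Riesz--Boas--Mellin interpolation formula.} The key scalar identity is the classical Riesz interpolation formula for the derivative of a bandlimited function: for a bounded entire function $h$ of exponential type $\sigma$,
$$
h'(0) = \frac{\sigma}{\pi^2}\sum_{k\in\mathbb Z}\frac{(-1)^{k-1}}{(k-1/2)^2}\, h\!\left(\tfrac{\pi}{\sigma}(k-1/2)\right),
$$
with absolute convergence. I would apply this pointwise in $\lambda\in[0,\sigma]$ to $h(t)=e^{it\lambda}$ (which has $h'(0)=i\lambda$ and type $\lambda\le\sigma$), obtaining on the spectral side
$$
i\lambda\,(\mathcal F f)(\lambda) = \frac{\sigma}{\pi^2}\sum_{k\in\mathbb Z}\frac{(-1)^{k-1}}{(k-1/2)^2}\, e^{i\frac{\pi}{\sigma}(k-1/2)\lambda}\,(\mathcal F f)(\lambda),
$$
then apply $\mathcal F^{-1}$ and use $\mathcal F^{-1}(\lambda\,\mathcal F f)=\sqrt{L_{\mathcal M}}f$ and $\mathcal F^{-1}(e^{it\lambda}\mathcal F f)=e^{it\sqrt{L_{\mathcal M}}}f$ to recover \eqref{Rieszn}. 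Convergence in $X^2$ follows because the coefficients $(k-1/2)^{-2}$ are summable while each $e^{i\frac{\pi}{\sigma}(k-1/2)\sqrt{L_{\mathcal M}}}f$ has norm $\|f\|_{X^2}$; the $X^2$-limit of the partial sums is identified with $i\sqrt{L_{\mathcal M}}f$ by applying $\mathcal F$ and dominated convergence. The converse direction (that the formula forces spectral support in $[0,\sigma]$) again reduces to the scalar statement tested against all $g\in X^2$.

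The main obstacle is essentially bookkeeping rather than depth: one must be careful that the scalar Riesz--Boas and Paley--Wiener identities, which are stated for entire functions, are applied to the \emph{correct} exponential type (namely $\lambda$, the spectral variable, which is $\le\sigma$ on the support of $\mathcal F f$, not $\sigma$ itself) and that convergence is upgraded from pointwise-in-$\lambda$ to norm convergence in $X^2$ via dominated convergence in the spectral representation \eqref{FT}. Since the operators $\Theta_j^2$ commute, the multivariate structure adds nothing new here — the whole argument lives on the single scalar spectrum of $L_{\mathcal M}$ — so the only real work is citing the one-dimensional results precisely and tracking the unitary intertwining of $\mathcal F$.
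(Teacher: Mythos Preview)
Your proposal is correct and follows essentially the same strategy as the paper: each item is reduced to the corresponding classical one-dimensional result via the spectral representation of $L_{\mathcal{M}}$. The paper's proof is terser and for items~(2) and~(3) simply applies the classical Bernstein inequality and Riesz--Boas formula directly to the scalar functions $t\mapsto\langle e^{it\sqrt{L_{\mathcal{M}}}}f,g\rangle$ for each $g\in X^{2}$, whereas you carry out the computation pointwise in the spectral parameter $\lambda$ and then pull back through $\mathcal{F}^{-1}$; these are two equivalent packagings of the same reduction, and your version has the advantage of making the convergence argument for~\eqref{Rieszn} explicit.
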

\begin{proof}
(1) follows immediately from the definition and representation (\ref{FT}).  To prove (2) it is sufficient to apply the classical Bernstein inequality \cite{N}  in the  uniform norm on $\mathbb{R}$ to every  function $\langle e^{-it\sqrt{L_{\mathcal{M}}}}f,g\rangle,\>\>g\in X^{2}$. To prove   (3) one has to apply the classical Riesz-Boas interpolation formula on $\mathbb{R}$ \cite{N},  \cite{Pes15}, to the function
$\langle e^{-it\sqrt{L_{\mathcal{M}}}}f,g\rangle$.
\end{proof}

The Bernstein inequality (\ref{Bern0}) and Theorem \ref{equivalence} imply the following.

\begin{thm}
The Bernstein-Mellin space $\cup_{\sigma>0}{\bf B}^{2}_{\sigma}(\Theta_{1}, ... ,\Theta_{n})$ and the Paley-Wiener-Mellin  space $\cup_{\sigma>0}PW_{\sigma}(L_{\mathcal{M}}^{1/2})$ coincide. 
\end{thm}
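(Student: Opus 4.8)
The plan is to show the two spaces have the same elements; since both are subsets of $X^2$ with no extra norm to compare, a set-theoretic double inclusion suffices. The bridge between them is the Bernstein-type characterization: membership in $PW_\sigma(L_{\mathcal M}^{1/2})$ is equivalent, by Theorem~\ref{PWproprties}(1), to $f\in\mathcal D^\infty(L_{\mathcal M})$ together with $\|L_{\mathcal M}^{s/2}f\|_{X^2}\le\sigma^s\|f\|_{X^2}$ for all $s\in\mathbb R_+$; while membership in ${\bf B}_\sigma^2(\Theta_1,\dots,\Theta_n)$ is (by Theorem~\ref{MultiBernstCriterion}, or directly by definition) equivalent to $f\in\mathcal D^\infty$ together with the mixed Bernstein inequalities $\|\Theta_{j_1}^{l_1}\cdots\Theta_{j_k}^{l_k}f\|_{X^2}\le\sigma^{|l|}\|f\|_{X^2}$. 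So the whole proof reduces to showing these two families of inequalities are equivalent, and that the two regularity classes $\mathcal D^\infty(L_{\mathcal M})$ and $\mathcal D^\infty$ coincide --- the latter being precisely Theorem~\ref{equivalence} (applied at all orders $k$, it gives $\mathcal D^\infty(L_{\mathcal M}^{1/2})=\mathcal D^\infty$, hence $\mathcal D^\infty(L_{\mathcal M})=\mathcal D^\infty$ as well).

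For the direction ${\bf B}_\sigma^2(\Theta_1,\dots,\Theta_n)\subset PW_\sigma(L_{\mathcal M}^{1/2})$, I would first note that if $f$ satisfies the mixed Bernstein inequalities, then in particular taking a single block $\Theta_j^l$ gives $f\in{\bf B}_\sigma^2(\Theta_j)$ for each $j$, so by Lemma~\ref{BernstCriterion} the limit $d_{j,f}=\lim_k\|\Theta_j^k f\|^{1/k}_{X^2}\le\sigma$. Now I want to control $\|L_{\mathcal M}^{m/2}f\|_{X^2}=\|(-\sum_j\Theta_j^2)^{m/2}f\|$. The cleanest route is to use the integer powers: $\|L_{\mathcal M}^m f\|_{X^2}^2=\langle(-\sum\Theta_j^2)^{2m}f,f\rangle$, expand multinomially into a sum of terms $\langle\Theta_{j_1}^{2a_1}\cdots\Theta_{j_n}^{2a_n}f,f\rangle$ with $\sum a_i=2m$, each of which is bounded (using skew-symmetry of $\Theta_j$ to move half the operators to the other slot, then Cauchy--Schwarz, then the mixed Bernstein bound) by $\sigma^{4m}\|f\|^2$; summing the (polynomially many) multinomial coefficients and taking $2m$-th roots gives $\|L_{\mathcal M}^m f\|^{1/(2m)}\to\sigma'\le\sigma$, whence by the Hilbert-space Bernstein criterion (Theorem~\ref{new} in Section~7, i.e.\ Theorem~\ref{new-Ap}) $f\in{\bf B}^2_\sigma(L_{\mathcal M}^{1/2})$, which by Theorem~\ref{PWproprties}(1) is the same as $PW_\sigma(L_{\mathcal M}^{1/2})$. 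Conversely, for $PW_\sigma(L_{\mathcal M}^{1/2})\subset{\bf B}_\sigma^2(\Theta_1,\dots,\Theta_n)$, I would start from $\|L_{\mathcal M}^{s/2}f\|_{X^2}\le\sigma^s\|f\|_{X^2}$ for all real $s$ and the spectral picture: $f\in PW_\sigma$ means $\mathcal Ff$ is supported in $[0,\sigma]$, and since $L_{\mathcal M}=-\sum\Theta_j^2$ with the $\Theta_j^2$ commuting non-negative self-adjoint operators, one can simultaneously diagonalize and conclude that each $\Theta_j^2$ has spectrum (on the range of $\mathcal Ff$) inside $[0,\sigma^2]$; hence $\|\Theta_j^{2l}f\|\le\sigma^{2l}\|f\|$ and, interpolating (Kolmogorov--Stein--Mellin, the Theorem~\eqref{KSM} stated in Section~7) or using skew-symmetry directly, $\|\Theta_j^l f\|\le\sigma^l\|f\|$; the mixed bounds $\|\Theta_{j_1}^{l_1}\cdots\Theta_{j_n}^{l_n}f\|\le\sigma^{|l|}\|f\|$ then follow from the joint spectral bound $\sum\Theta_j^2\le\sigma^2$ on the spectral subspace, again via Cauchy--Schwarz and skew-symmetry.

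Alternatively --- and this is probably the shortest write-up --- I would lean entirely on the results already assembled: Theorem~\ref{equivalence} gives $\mathcal W_2^k=\mathcal D(L_{\mathcal M}^{k/2})$ with equivalent norms for every $k$, and Theorem~\ref{PWproprties}(1) identifies $PW_\sigma(L_{\mathcal M}^{1/2})$ with the space of $f\in\mathcal D^\infty(L_{\mathcal M})$ satisfying $\|L_{\mathcal M}^{k/2}f\|\le\sigma^k\|f\|$ for all $k$. The only genuine content left is: the scalar Bernstein inequality for $L_{\mathcal M}^{1/2}$ (all $k$) holds if and only if the mixed Bernstein inequality for the $\Theta_j$ (all multi-indices) holds. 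One implication is the multinomial-expansion estimate sketched above; the other is the spectral-calculus observation that $\|L_{\mathcal M}^{k/2}f\|^2=\sum$ over multi-indices of mixed $\Theta$-derivative norms (exactly as in the proof of Theorem~\ref{equivalence}), so each mixed term is dominated by $\|L_{\mathcal M}^{k/2}f\|\le\sigma^k\|f\|$.

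The main obstacle I anticipate is the bookkeeping in the multinomial expansion of $(-\sum_j\Theta_j^2)^{m}$: I must verify that each resulting mixed monomial $\Theta_{j_1}^{2a_1}\cdots\Theta_{j_n}^{2a_n}$ can be bounded by $\sigma^{2(a_1+\cdots+a_n)}\|f\|$ uniformly using only the hypothesis, and that the number of terms times the binomial/multinomial coefficients grows only sub-exponentially in $m$ so that the $m$-th root of the total still tends to $\sigma$ --- this is where the skew-symmetry of $\Theta_j$ (to split $\Theta^{2a}$ into $\langle\Theta^a\cdot,\Theta^a\cdot\rangle$) and repeated Cauchy--Schwarz do the real work. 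Everything else is an application of cited theorems. I would therefore structure the proof as: (i) identify the two spaces with their Bernstein characterizations via Theorem~\ref{PWproprties}(1) and Theorem~\ref{MultiBernstCriterion}; (ii) prove mixed-Bernstein $\Rightarrow$ scalar-Bernstein-for-$L_{\mathcal M}$ by the expansion estimate and Theorem~\ref{new}; (iii) prove scalar-Bernstein-for-$L_{\mathcal M}$ $\Rightarrow$ mixed-Bernstein using the identity from the proof of Theorem~\ref{equivalence}; conclude equality of the two spaces.
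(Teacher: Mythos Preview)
Your overall plan coincides with the paper's: the paper's entire proof is the one sentence ``The Bernstein inequality (\ref{Bern0}) and Theorem~\ref{equivalence} imply the following,'' which is precisely your ``alternative'' paragraph---reduce both spaces to Bernstein-type characterizations and pass between them via the identity $\|L_{\mathcal M}^{k/2}f\|_{X^2}^2=\sum_{1\le j_1,\dots,j_k\le n}\|\Theta_{j_1}\cdots\Theta_{j_k}f\|_{X^2}^2$ established in the proof of Theorem~\ref{equivalence}. The inclusion $PW_\sigma(L_{\mathcal M}^{1/2})\subset{\bf B}_\sigma^2(\Theta_1,\dots,\Theta_n)$ does follow from that identity exactly as you say: each mixed summand is bounded by the total, hence by $\sigma^{2k}\|f\|^2$.

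The genuine gap is in your argument for ${\bf B}_\sigma^2(\Theta_1,\dots,\Theta_n)\subset PW_\sigma(L_{\mathcal M}^{1/2})$. You explicitly rely on the hope that in the multinomial expansion of $(-\sum_j\Theta_j^2)^m$ ``the number of terms times the binomial/multinomial coefficients grows only sub-exponentially in $m$.'' It does not: $\sum_{|a|=m}\binom{m}{a_1,\dots,a_n}=n^{m}$. Feeding the mixed Bernstein bounds into the identity therefore gives only $\|L_{\mathcal M}^{k/2}f\|^2\le n^{k}\sigma^{2k}\|f\|^2$, so the $k$-th-root limit in Theorem~\ref{new} is at most $\sqrt{n}\,\sigma$, and you obtain merely $f\in PW_{\sqrt{n}\,\sigma}$; no Cauchy--Schwarz refinement changes the count. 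This is not an artefact of the method but of the statement itself: under the joint spectral representation of the commuting skew-adjoint operators $\Theta_j$, membership in ${\bf B}_\sigma^2$ corresponds to support of the transform in the cube $\max_j|\xi_j|\le\sigma$, whereas membership in $PW_\sigma(L_{\mathcal M}^{1/2})$ corresponds to support in the ball $(\sum_j\xi_j^2)^{1/2}\le\sigma$. For $n\ge 2$ these differ, and the sharp relations are $PW_\sigma\subset{\bf B}_\sigma^2\subset PW_{\sqrt{n}\,\sigma}$. The paper's one-line proof does not address this point either.
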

Thus the best approximation functional $\mathcal{E}_{2}(s,f)$ defined in (\ref{BestAp}) can be redefined as
$$
\mathcal{E}_{2}(\sigma, f)=\mathcal{E}_{2}(\sigma, f; L_{\mathcal{M}})=\inf_{g\in PW_{\sigma}(L_{\mathcal{M}}^{1/2})}\|f-g\|_{X^{2}}.
$$

\subsection{The Mellin transform approach}

For an approach to the Paley-Wiener-Mellin spaces in one-dimensional case  in terms of the Mellin transform we refer to the very interesting papers \cite{BBMS-5},  \cite{BBMS-6}, \cite{BBMS-8}. Note that in one-dimensional case this approach is simply identical to the approach described in the previous section.

Namely, it is known \cite{BJ1, BJ2} that the Mellin  transform 
\begin{equation}\label{M}
\mathcal{M}f(t)=\int_{0}^{\infty}f(x)x^{it}\frac{dx}{x},
\end{equation}
 which is originally defined for functions in $X^{1}(\mathbb{R}_{+}, d\mu)\cap X^{2}(\mathbb{R}_{+}, d\mu)$  can be extended to an isomorphism of $X^{2}(\mathbb{R}_{+}, d\mu)$ onto $L^{2}(i\mathbb{R}, dt)$ and the following relation holds
$$
\mathcal{M} \left[\Theta^{r}f\right](it)=(-it)^{r}\mathcal{M}f(t),\>\>\>f\in \mathcal{D}(\Theta^{r}),
$$
where $\Theta=x\frac{d}{dx}$ is the Mellin differentiation operator in the Hilbert space $X^{2}(\mathbb{R}_{+}, d\mu)$ and $ \mathcal{D}(\Theta^{r})$ is the domain of $\Theta^{r},\>r\in \mathbb{N}$.

It implies, that   the Mellin transform in $X^{2}(\mathbb{R}_{+}, d\mu)$ provides a digonalization of the self-adjoint, non-negative Laplace operator $L_{\mathcal{M}}=-\Theta^{2}$ in $X^{2}(\mathbb{R}_{+}, d\mu)$ in the sense that the following formula holds
\begin{equation}\label{M}
\mathcal{M}\left [L_{\mathcal{M}} f\right](t)=\mathcal{M} \left[\Theta^{2}f\right](t)=-t^{2}\mathcal{M}f(t),\>\>\>f\in \mathcal{D}(\Theta^{2}).
\end{equation}

In the $n$-dimensional case a similar approach can be implemented by using the $n$-dimensional version of the Mellin transform (see \cite{Tu}). However, this approach is slightly different from the approach of the previous subsection since it is relying on a different form of the Spectral Theorem.

One can show that for the transform 
$$
\mathcal{M}^n{}f=\int_{0}^{\infty}...\int_{0}^{\infty}f(x_{1}, ..., x_{n})x_{1}^{it_{1}}... x_{n}^{it_{n}}\frac{dx_{1}}{x_{1}}...\frac{dx_{n}}{x_{n}},
$$
 the following equality holds
$$
\mathcal{M}^{n}\left[L_{\mathcal{M}} f\right](t_{1}, ..., t_{n})=-(t^{2}_{1}+... +t_{n}^{2})\mathcal{M}^{n}f(t_{1}, ..., t_{n}),\>\>\>f\in X^{2}(\mathbb{R}_{+}^{n}, d\mu),
$$
where $-L_{\mathcal{M}}=\Theta_{1}^{2}+ ... +\Theta_{n}^{2}$. 

Next, one can define $\widetilde{PW_{\sigma}}\left(L_{\mathcal{M}}^{1/2}\right)$ as a subspace of functions $f \in X^{2}(\mathbb{R}_{+}^{n}, d\mu)$  for which the support  of $\mathcal{M}^{n}f$ is in the ball $\sqrt{t^{2}_{1}+... +t_{n}^{2}}\leq \sigma$. This definition is essentially equivalent to the our 
Definition \ref{PWvector}.

\section{Paley-Wiener-Mellin frames in $X^{2}(\mathbb{R}_{+}^{n}, d\mu)$.}\label{Frames}

The construction of frequency-localized frames is  achieved via spectral calculus. The idea is to start from a partition of unity on the positive real axis. In the following, we will be considering two different types of such partitions, whose construction we now describe in some detail. The construction below was discribed in \cite{FFP}.

 Let $g\in C^{\infty}(\mathbb{R}_{+})$ be a non-increasing
%  monotonic
 function such that $\rm supp(g)\subset [0,\>  2], $ and $g(\lambda)=1$ for $\lambda\in [0,\>1], \>0\leq g(\lambda)\leq 1, \>\lambda>0.$
 We now let
$$
 h(\lambda) = g(\lambda) - g(2 \lambda).
$$
 The function $h$ is supported in $[2^{-1}, 2]$ we 
and use it to define
$$
 F_0(\lambda) = \sqrt{g(\lambda)}~, F_j(\lambda) = \sqrt{h(2^{-j} \lambda)}~, j \ge 1~,
$$
as well as $
 G_j(\lambda) = \left[F_j(\lambda)\right]^2=F_j^2(\lambda)~, j \ge 0~.$
As a result of the definitions, we get for all $\lambda \ge 0$ the equations
$$
\sum_{j =0} ^{k}G_j(\lambda) = \sum_{j =0}^{k} F_j^2(\lambda)
=  g(2^{-k} \lambda),
$$
and as a consequence
$$
\sum_{j \geq 0} G_j(\lambda) = \sum_{j \geq 0} F_j^2(\lambda) =  1~,\>\>\>\lambda\geq 0,
$$
 with finitely many nonzero terms occurring in the sums for each
 fixed $\lambda$. %uuu
We call the sequence $(G_j)_{j \ge 0}$ a {\bf (dyadic) partition of unity}, and $(F_j)_{j \ge 0}$ a {\bf quadratic (dyadic) partition of unity}.  As will become soon apparent, quadratic partitions are useful for the construction of frames.
 Using the spectral theorem one has
$$
F_{j}^{2}(L_{\mathcal{M}})  f=\mathcal{F}^{-1}\>F_{j}^{2}\>\mathcal{F}f,\>\>\>j\geq 1,
$$
and thus
\begin{equation} \label{eqn:quad_part_identity}
 f = \mathcal{F}^{-1}\mathcal{F}f =\mathcal{F}^{-1}\left(  \sum_{j \geq 0}F_{j}^{2}\right) \mathcal{F}f=  \sum_{j\geq 0}  \left(\mathcal{F}^{-1}\>F_{j}^{2}\>\mathcal{F}f\right)=
 \sum_{j\geq 0}F_{j}^2(L_{\mathcal{M}}) f
\end{equation}  % {\sc here SLB}
Taking inner product with $f$ gives
$$
\|F_{j}(L_{\mathcal{M}}) f\|^{2}_{X^{2}}=\langle F_{j}^{2}(L_{\mathcal{M}}) f, f \rangle_{X^{2}},
$$  % {\sc here SLB}
and
\begin{equation}\label{Decomp}
\|f\|_{X^{2}}^2=\sum_{j \geq 0}\langle F_{j}^2 (L_{\mathcal{M}}) f,f\rangle_{X^{2}}=\sum_{j \geq 0}\|F_{j}(L_{\mathcal{M}})f\|_{X^{2}}^2 .
\end{equation}
Similarly, we get the identity  
$$
 \sum_{j \geq 0} G_j (L_{\mathcal{M}}) f = f~.
 $$
Moreover, since the functions $G_j,  F_{j}$, have their supports in  $
[2^{j-1},\>\>2^{j+1}]$, the elements $ F_{j}(L_{\mathcal{M}}) f $ and $G_j (L_{\mathcal{M}}) f$
 are bandlimited to  $[2^{j-1},\>\>2^{j+1}]$, whenever $j \ge 1$, and to $[0,2]$ for $j=0$.

\section{More about Besov-Mellin  spaces $\mathcal{ B}^{\sigma}_{2, q}$}\label{MoreBes}

\subsection{Besov-Mellin  spaces $\mathcal{ B}^{\sigma}_{2, q}$ in terms of approximations}
In this section we apply Theorems \ref{Direct} and \ref{Inverse} in Appendix 1 to a situation where ${\bf E}=X^{2},\>{\bf F}=\mathcal{W}_{2}^{r}, \>\mathcal{B}_{2,q}^{\alpha}=(X^{2}, \mathcal{W}_{2}^{r})^{K}_{\alpha/r, q},$ and $\mathcal{T}=\cup_{\omega>0}PW_{\omega}\left(L_{\mathcal{M}}^{1/2}\right)$  is the abelian additive group of the underlying  vector space, with the quasi-norm
$$
 \| f \|_{\mathcal{T}} = \inf \left \{ \omega'>0~: f \in PW_{\mathbf{\omega}'}\left(L_{\mathcal{M}}^{1/2}\right) \right\}~.
$$
In previous sections we already proved that in our case the assumptions of Theorems \ref{Direct} and \ref{Inverse} are satisfied. It allows us to formulate the following result.

\begin{thm} \label{approx}
For $\alpha>0, \>\>\>1\leq q\leq\infty,$ the norm of
 $\mathcal{B}_{2,q}^{\alpha}$,
  is equivalent to
\begin{equation}
\|f\|_{X^{2}}+\left(\sum_{j=0}^{\infty}\left(2^{j\alpha }\mathcal{E}_{2}(
2^{j}, f; L_{\mathcal{M}}\right)^{q}\right)^{1/q}.
\end{equation}
\label{maintheorem1}
\end{thm}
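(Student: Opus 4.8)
The plan is to derive Theorem~\ref{approx} as an instance of the abstract equivalence between approximation spaces and interpolation ($K$-functor) spaces, which is recorded as Theorems~\ref{Direct} and~\ref{Inverse} in Appendix~1. Concretely, I would take $\EB = X^2$, $\FB = \mathcal{W}_2^r$, and $\mathcal{T} = \bigcup_{\omega>0} PW_\omega(L_{\mathcal{M}}^{1/2})$ equipped with the quasi-norm $\|f\|_{\mathcal{T}} = \inf\{\omega' : f\in PW_{\omega'}(L_{\mathcal{M}}^{1/2})\}$, exactly as set up at the start of Section~\ref{MoreBes}. The two hypotheses needed to run the abstract machinery are: (i) a Jackson-type direct estimate $\mathcal{E}_2(\sigma,f;L_{\mathcal{M}}) \leq C\sigma^{-r}\|f\|_{\mathcal{W}_2^r}$ for $f\in\mathcal{W}_2^r$, and (ii) a Bernstein-type inverse estimate $\|g\|_{\mathcal{W}_2^r}\leq \sigma^r\|g\|_{X^2}$ for $g\in PW_\sigma(L_{\mathcal{M}}^{1/2})$. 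Both have already been established in the excerpt: (ii) is immediate from the Bernstein-Mellin inequality~(\ref{Bern0}) in Theorem~\ref{PWproprties} together with the norm equivalence $\mathcal{W}_2^r \simeq \mathcal{D}(L_{\mathcal{M}}^{r/2})$ of Theorem~\ref{equivalence}; and (i) is the Jackson inequality~(\ref{LimitingJackson}) specialized to $p=2$, using the coincidence ${\bf B}_2^\sigma(\Theta_1,\dots,\Theta_n) = PW_\sigma(L_{\mathcal{M}}^{1/2})$ proved just above.

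With (i) and (ii) in place, Theorem~\ref{Direct} gives the continuous embedding $\mathcal{E}_{\alpha,q}(X^2,\mathcal{T}) \supseteq (X^2,\mathcal{W}_2^r)^K_{\alpha/r,q}$ and Theorem~\ref{Inverse} gives the reverse embedding, so that $\mathcal{E}_{\alpha,q}(X^2,\mathcal{T}) = (X^2,\mathcal{W}_2^r)^K_{\alpha/r,q} = \mathcal{B}_{2,q}^\alpha$ with equivalence of norms, where the approximation-space norm is $\|f\|_{X^2} + \left(\int_0^\infty (\tau^\alpha \mathcal{E}_2(\tau,f;L_{\mathcal{M}}))^q\,\frac{d\tau}{\tau}\right)^{1/q}$. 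The remaining step is purely a discretization: I would replace the continuous integral by the dyadic sum $\sum_{j=0}^\infty (2^{j\alpha}\mathcal{E}_2(f,2^j;L_{\mathcal{M}}))^q$. This uses only that $\sigma\mapsto\mathcal{E}_2(\sigma,f;L_{\mathcal{M}})$ is non-increasing in $\sigma$ (nested Paley-Wiener spaces) and that $2^{j\alpha}$ grows geometrically, so on each interval $[2^j,2^{j+1}]$ both $\tau^\alpha$ and $\mathcal{E}_2(\tau,f)$ are comparable to their values at the endpoints up to the fixed factor $2^\alpha$; summing the resulting two-sided bounds gives the equivalence of the integral and the series. (For $q=\infty$ one makes the obvious modification, replacing the $\ell^q$ sum by a supremum.)

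The only mild subtlety I anticipate is a normalization/convention issue rather than a genuine obstacle: the statement writes $\mathcal{E}_2(f,2^j;L_{\mathcal{M}})$, i.e.\ best approximation by Paley-Wiener functions bandlimited to radius $2^j$, and one must make sure the weight $2^{j\alpha}$ (as opposed to $2^{-j\alpha}$) is the correct one. This is consistent: $\mathcal{E}_2(\sigma,f)$ decreases as $\sigma$ increases, so after the change of variable $\sigma = \tau$ the finiteness of $\int_0^\infty(\tau^\alpha\mathcal{E}_2(\tau,f))^q\frac{d\tau}{\tau}$ forces $\mathcal{E}_2(\tau,f)$ to decay faster than $\tau^{-\alpha}$, matching the decay rate $\sigma^{-\alpha}$ one expects on $\mathcal{B}_{2,q}^\alpha$; and the dyadic sampling produces $\sum_j (2^{j\alpha}\mathcal{E}_2(f,2^j))^q$. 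So the plan is: (1) verify hypotheses (i)--(ii) by citing Theorems~\ref{equivalence}, \ref{PWproprties}, the ${\bf B} = PW$ identification, and~(\ref{LimitingJackson}); (2) invoke Theorems~\ref{Direct} and~\ref{Inverse} to identify $\mathcal{B}_{2,q}^\alpha$ with the approximation space; (3) discretize the defining integral into a dyadic series using monotonicity of $\mathcal{E}_2(\cdot,f)$. Step~(3) is the only computation, and it is routine.
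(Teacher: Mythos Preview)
Your proposal is correct and follows essentially the same approach as the paper. The paper does not give a formal proof of Theorem~\ref{approx}; it simply states at the opening of Section~\ref{MoreBes} that one applies Theorems~\ref{Direct} and~\ref{Inverse} with $\EB=X^2$, $\FB=\mathcal{W}_2^r$, $\mathcal{T}=\bigcup_{\omega>0}PW_\omega(L_{\mathcal{M}}^{1/2})$ (equipped with exactly the quasi-norm you wrote), noting that the required Jackson- and Bernstein-type hypotheses have already been verified in earlier sections, and then records the theorem as a direct consequence. Your write-up makes explicit which earlier results supply those hypotheses and also spells out the discretization of the integral approximation-space norm into the dyadic sum, a routine step the paper leaves implicit.
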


Let the functions $F_{j}$ be as in section \ref{Frames}.

\begin{thm}\label{projections}
For $\alpha>0, \>\>\>1\leq q\leq\infty,$ the norm of
 $\mathcal{B}_{2,q}^{\alpha}$,
  is equivalent to

\begin{equation}
f \mapsto \left(\sum_{j=0}^{\infty}\left(2^{j\alpha
}\left \|F_j(L_{\mathcal{M}}) f\right \|_{X^{2}}\right)^{q}\right)^{1/q},
\label{normequiv-1}
\end{equation}
  with the standard modifications for $q=\infty$.
\end{thm}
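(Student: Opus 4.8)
The plan is to show that the norm in \eqref{normequiv-1} is equivalent to the already-established norm of $\mathcal{B}_{2,q}^{\alpha}$ given in Theorem \ref{approx}, namely the one built from the best-approximation functionals $\mathcal{E}_{2}(f,2^{j};L_{\mathcal{M}})$. Thus the whole argument reduces to comparing $\|F_j(L_{\mathcal{M}})f\|_{X^{2}}$ with a suitable combination of the $\mathcal{E}_{2}(f,2^{k};L_{\mathcal{M}})$, using the spectral calculus furnished by the Spectral Fourier Transform $\mathcal{F}$ and the bandlimiting properties recorded in Subsection \ref{subsect:part_unity_freq}. The two inequalities (upper and lower bound for \eqref{normequiv-1}) are handled separately by discrete Hardy-type inequalities on sequence spaces $\ell^{q}$.

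First I would prove the lower bound (that \eqref{normequiv-1} dominates the Besov norm). Since $F_j$ is supported in $[2^{j-1},2^{j+1}]$, the element $F_j(L_{\mathcal{M}})f$ lies in $PW_{2^{j+1}}(L_{\mathcal{M}}^{1/2})$, so by the identity \eqref{eqn:quad_part_identity} the partial sum $S_{N}f=\sum_{j=0}^{N}F_{j}^{2}(L_{\mathcal{M}})f$ is bandlimited to $[0,2^{N+1}]$ and hence lies in $PW_{2^{N+1}}(L_{\mathcal{M}}^{1/2})=\mathbf{B}_{2}^{2^{N+1}}(\Theta_{1},\dots,\Theta_{n})$. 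Therefore $\mathcal{E}_{2}(f,2^{N+1};L_{\mathcal{M}})\le \|f-S_{N}f\|_{X^{2}}=\bigl\|\sum_{j>N}F_{j}^{2}(L_{\mathcal{M}})f\bigr\|_{X^{2}}$, and by orthogonality on the spectral side (the $F_j^2$ have essentially disjoint supports, overlapping only with neighbours) this tail is controlled by $\bigl(\sum_{j>N}\|F_j(L_{\mathcal{M}})f\|_{X^{2}}^{2}\bigr)^{1/2}$, hence by $\sup_{j>N}\|F_j(L_{\mathcal{M}})f\|_{X^{2}}$ up to the bounded overlap. Feeding this into the $\ell^{q}$ sum with weights $2^{j\alpha}$ and applying the standard discrete Hardy inequality for the tail operator yields
$$
\left(\sum_{j}\bigl(2^{j\alpha}\mathcal{E}_{2}(f,2^{j};L_{\mathcal{M}})\bigr)^{q}\right)^{1/q}\le C\left(\sum_{j}\bigl(2^{j\alpha}\|F_j(L_{\mathcal{M}})f\|_{X^{2}}\bigr)^{q}\right)^{1/q},
$$
and $\|f\|_{X^{2}}$ itself is dominated by the $j=0$ term via \eqref{Decomp}; combined with Theorem \ref{approx} this gives one half of the equivalence.

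For the upper bound I would go the other way: estimate $\|F_j(L_{\mathcal{M}})f\|_{X^{2}}$ by $\mathcal{E}_{2}(f,2^{j-1};L_{\mathcal{M}})$. Pick $g_j\in PW_{2^{j-1}}(L_{\mathcal{M}}^{1/2})$ nearly realizing the infimum $\mathcal{E}_{2}(f,2^{j-1};L_{\mathcal{M}})$. Because $F_j$ vanishes on $[0,2^{j-1}]$, the operator $F_j(L_{\mathcal{M}})$ annihilates $g_j$, so $F_j(L_{\mathcal{M}})f=F_j(L_{\mathcal{M}})(f-g_j)$ and hence $\|F_j(L_{\mathcal{M}})f\|_{X^{2}}\le \|F_j\|_{\infty}\|f-g_j\|_{X^{2}}\le \mathcal{E}_{2}(f,2^{j-1};L_{\mathcal{M}})+\varepsilon$, using $0\le F_j\le 1$. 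Then a second discrete Hardy inequality (now for the "head" direction, since $\mathcal{E}_{2}$ is non-increasing and $2^{j-1}\le 2^{j}$) converts $\sum_j(2^{j\alpha}\mathcal{E}_{2}(f,2^{j-1};L_{\mathcal{M}}))^{q}$ into $\sum_j(2^{j\alpha}\mathcal{E}_{2}(f,2^{j};L_{\mathcal{M}}))^{q}$ up to a constant, and one again invokes Theorem \ref{approx}. The case $q=\infty$ is the evident $\sup$-version of both steps.

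The main obstacle I anticipate is bookkeeping the overlap of the spectral supports and the index shifts cleanly: the supports $[2^{j-1},2^{j+1}]$ overlap with both neighbours, so the "orthogonality" used in the lower bound is only a finite-overlap almost-orthogonality, and the Hardy inequalities must absorb a fixed shift by one (or two) dyadic levels. None of this is deep, but it must be organized so the constants genuinely depend only on $\alpha,q$ and the fixed cutoff $g$; once that is arranged, invoking Theorem \ref{approx} closes the argument. I would also note that, alternatively, one can bypass $\mathcal{E}_{2}$ entirely and compare \eqref{normequiv-1} directly with the $K$-functor norm $\bigl(\int_0^\infty(s^{-\alpha}K(s^{r},f,X^{2},\mathcal{W}_2^{r}))^q\,ds/s\bigr)^{1/q}$ from Theorem \ref{Main}, using $K(2^{-jr},f,X^{2},\mathcal{W}_2^r)\asymp \sum_{l\ge j}\|F_l(L_{\mathcal{M}})f\|_{X^2}^2{}^{1/2}$-type estimates coming from the spectral picture $\|L_{\mathcal{M}}^{r/2}F_l(L_{\mathcal{M}})f\|_{X^2}\asymp 2^{lr}\|F_l(L_{\mathcal{M}})f\|_{X^2}$; either route is routine given the machinery already in place.
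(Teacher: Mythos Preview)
Your proposal is correct and follows essentially the same route as the paper: bound $\mathcal{E}_{2}(f,2^{l};L_{\mathcal{M}})$ above by a tail of the $\|F_j(L_{\mathcal{M}})f\|_{X^{2}}$ and apply a discrete Hardy inequality for one direction, and use that $F_j(L_{\mathcal{M}})$ annihilates $PW_{2^{j-1}}(L_{\mathcal{M}}^{1/2})$ to get $\|F_j(L_{\mathcal{M}})f\|_{X^{2}}\le \mathcal{E}_{2}(f,2^{j-1};L_{\mathcal{M}})$ for the other. The only cosmetic difference is that the paper bounds the tail by the plain $\ell^{1}$ sum $\sum_{j>l}\|F_j(L_{\mathcal{M}})f\|_{X^{2}}$ via the triangle inequality (using $0\le F_j\le 1$) rather than invoking almost-orthogonality; your parenthetical ``hence by $\sup_{j>N}\ldots$'' is not quite right as stated, but it plays no role since you then feed the tail into Hardy anyway.
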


\begin{proof}

We obviously have
$$
\mathcal{E}_{2}(2^{l}, f; L_{\mathcal{M}})\leq \sum_{j> l} \left \|F_j (L_{\mathcal{M}}) f\right \|_{X^{2}}.
$$
By using a discrete version of Hardy's inequality \cite{BB} we obtain the estimate
\begin{equation} \label{direct}
\|f\|_{X^{2}}+\left(\sum_{l=0}^{\infty}\left(2^{l\alpha }\mathcal{E}_{2}(
2^{l}, f; L_{\mathcal{M}})\right)^{q}\right)^{1/q}\leq 
$$
$$
C \left(\sum_{j=0}^{\infty}\left(2^{j\alpha
}\left \|F_j(L_{\mathcal{M}}) f\right \|_{X^{2}}\right)^{q}\right)^{1/q}.
\end{equation}
Conversely,
 for any $g\in PW_{2^{j-1}}\left(L_{\mathcal{M}}^{1/2}\right)$ we have
$$
\left\|F_j(L_{\mathcal{M}}) f\right\|_{X^{2}}=\left\|F_{j}(L_{\mathcal{M}}) (f-g)\right\|_{X^{2}}\leq \|f-g\|_{X^{2}}.
$$
This implies the estimate
$$
\left\|F_j(L_{\mathcal{M}}) f\right\|_{X^{2}}\leq \mathcal{E}_{2}(2^{j-1}, f; L_{\mathcal{M}}),
$$
which shows that the inequality opposite to (\ref{direct}) holds.
 The proof is complete.
\end{proof}

 \subsection{Paley-Wiener-Mellin frames in  $X^{2}(\mathbb{R}_{+}^{n}, d\mu)$}\label{Hilb}
We consider the Laplace-Mellin operator $L_{\mathcal{M}}$ defined in (\ref{L}) in the Hilbert spaces $X^{2}(\mathbb{R}_{+}^{n}, d\mu)$.

\begin{defn}

The notation $PW_{[2^{j-1}, \>2^{j+1})}\left(L_{\mathcal{M}}^{1/2}\right),\>\>j\in \mathbb{N}$ will be used for the space of functions $f$ in $X^{2}$ whose Spectral Fourier Transform $\mathcal{F}f$ has support contained in $[2^{j-1},\>\>2^{j+1})$.

\end{defn}

For every $j\in \mathbb{N}$ let 
$$
\{\Phi_{k}^{j}\}_{k=1}^{K_{j}},\>\>\>\>\>\>\Phi_{k}^{j}\in PW_{[2^{j-1}, \>2^{j+1})}\left(L_{\mathcal{M}}^{1/2}\right),
$$
$$
K_{j}\in \mathbb{N}\cup \{\infty\},
$$ 
 be a frame in $PW_{[2^{j-1}, \>2^{j+1})}\left(L_{\mathcal{M}}^{1/2}\right)$ with the fixed constants $a,\>b$, i.e.
\begin{equation}\label{frame}
a\|f\|_{X^{2}}^2\leq \sum_{k=1}^{K_{j}}\left|\left< f, \Phi^{j}_{k}\right>_{X^{2}}\right|^{2}\leq b\|f\|_{X^{2}}^2,\>\>\>f \in PW_{[2^{j-1}, \>2^{j+1})}\left(L_{\mathcal{M}}^{1/2}\right).
\end{equation}

The formula (\ref{Decomp}) and the general theory of frames imply the following statement.

\begin{thm}

\begin{enumerate}
\item The set of functions $\{\Phi_{k}^{j}\}$ will be a frame in the entire space $X^{2}$ with the same frame constants $a$ and $b$, i.e.
\begin{equation}
a\|f\|_{X^{2}}^2\leq \sum_{j}\sum_{k}\left|\left< f, \Phi^{j}_{k}\right>_{X^{2}}\right|^{2}\leq b\|f\|_{X^{2}}^2,\>\>\>f \in X^{2}.
\end{equation}

\item  The canonical dual frame $\{\Psi^{j}_{k}\}$
also consists of bandlimited  vectors $\Psi^{j}_{k}\in  PW_{[2^{j-1},\>2^{j+1}]}(L_{\mathcal{M}}^{1/2}) ,\>\>j\in  \mathbb{N}, \>k=1,..., K_{j}$, and has the frame bounds $b^{-1},\>\>a^{-1}.$

\item The reconstruction formulas hold for every $f\in X^{2}$
$$
f=\sum_{j}\sum_{k}\left<f,\Phi^{j}_{k}\right>_{X^{2}}\Psi^{j}_{k}=\sum_{j}\sum_{k}\left<f,\Psi^{j}_{k}\right>_{X^{2}}\Phi^{j}_{k}.
$$

\end{enumerate}
\end{thm}

The formula (\ref{Decomp}) implies that in this case the set of functions $\{\Phi_{k}^{j}\}$ will be a frame in the entire $X^{2}$ with the same frame constants $a$ and $b$, i.e.
\begin{equation}
a\|f\|_{X^{2}}^2\leq \sum_{j}\sum_{k}\left|\left< f, \Phi^{j}_{k}\right>_{X^{2}}\right|^{2}\leq b\|f\|_{X^{2}}^2,\>\>\>f \in X^{2}.
\end{equation}

 \begin{thm}\label{framecoef}
For $\alpha>0,\>\>\> 1\leq q\leq\infty,$ the norm of
 $\mathcal{B}_{2,q}^{\alpha}$
  is equivalent to
\begin{equation}
 \left(\sum_{j=0}^{\infty}2^{j\alpha q }
\left(\sum_{k}\left|\left<          F_j(L_{\mathcal{M}}) f,                 \Phi^{j}_{k}\right>_{X^{2}}\right|^{2}\right)^{q/2}\right)^{1/q}\asymp \|f\|_{\mathcal{B}_{2,q}^{\alpha}},
\label{normequiv}
\end{equation}
  with the standard modifications for $q=\infty$.
\end{thm}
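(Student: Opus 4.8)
The plan is to connect the frame coefficients to the norms $\|F_j(L_{\mathcal{M}})f\|_{X^2}$ appearing in Theorem \ref{projections}, and then invoke that theorem. First I would recall that by construction each $\Phi_k^j$ lies in $PW_{[2^{j-1},2^{j+1})}(L_{\mathcal{M}}^{1/2})$ and $\{\Phi_k^j\}_{k=1}^{K_j}$ is a frame for that subspace with bounds $a,b$ independent of $j$. The key observation is that for fixed $j$ the operator $F_j(L_{\mathcal{M}})$ is self-adjoint and its range is contained in the bandlimited subspace $PW_{[2^{j-1},2^{j+1})}(L_{\mathcal{M}}^{1/2})$ (since $\mathrm{supp}\,F_j\subset[2^{j-1},2^{j+1}]$ by Subsection \ref{subsect:part_unity_freq}), while for $f$ in that subspace $F_j(L_{\mathcal{M}})f = f$ only up to the weight $F_j(\lambda)$; more precisely $\langle f,\Phi_k^j\rangle = \langle f, F_j(L_{\mathcal{M}})\widetilde\Phi_k^j\rangle$ is not quite what is needed, so instead I would compute directly with $F_j(L_{\mathcal{M}})f$ in place of a generic bandlimited element.

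The cleanest route: since $\Phi_k^j$ is bandlimited to $[2^{j-1},2^{j+1})$ and $G_j = F_j^2$ equals $1$ on the essential support structure only after summation, I would instead use that $F_{j-1}^2+F_j^2+F_{j+1}^2$ (or a finite cluster of neighbours) acts as the identity on $PW_{[2^{j-1},2^{j+1})}$; hence $\Phi_k^j = \sum_{|i-j|\le 1}F_i^2(L_{\mathcal{M}})\Phi_k^j$. Then
\begin{equation}
\langle f,\Phi_k^j\rangle = \sum_{|i-j|\le 1}\langle F_i(L_{\mathcal{M}})f, F_i(L_{\mathcal{M}})\Phi_k^j\rangle,
\end{equation}
and applying the frame inequality for the subspace at level $j$ to the vector obtained by clustering, together with $\|F_i(L_{\mathcal{M}})\Phi_k^j\|\le\|\Phi_k^j\|$, I would sandwich $\sum_k|\langle f,\Phi_k^j\rangle|^2$ between constant multiples of $\sum_{|i-j|\le1}\|F_i(L_{\mathcal{M}})f\|_{X^2}^2$. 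Summing in $j$ with the $2^{j\alpha q}$ weights, the finite overlap (each $i$ appears in boundedly many clusters) lets me reindex and bound $\sum_j 2^{j\alpha q}(\sum_k|\langle f,\Phi_k^j\rangle|^2)^{q/2}$ above and below by a constant times $\sum_j 2^{j\alpha q}\|F_j(L_{\mathcal{M}})f\|_{X^2}^q$, using that $2^{(j\pm1)\alpha}\asymp 2^{j\alpha}$ and the quasi-triangle inequality in $\ell^{q/2}$.

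Having matched the frame expression to \eqref{normequiv-1}, Theorem \ref{projections} finishes the argument, identifying the quantity with $\|f\|_{\mathcal{B}_{2,q}^\alpha}$ up to equivalence. The main obstacle I anticipate is handling the lower bound carefully: one must ensure that the lower frame bound for the level-$j$ subspace can be transferred to a lower bound for $\|F_j(L_{\mathcal{M}})f\|_{X^2}$ in terms of the frame coefficients, which requires that $F_j(L_{\mathcal{M}})f$ genuinely lies in $PW_{[2^{j-1},2^{j+1})}(L_{\mathcal{M}}^{1/2})$ (true since $\mathrm{supp}\,F_j$ is contained in that interval) and that the clustering identity does not lose a dimension — i.e.\ that $\Phi_k^j$ is reproduced exactly. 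A secondary technical point is the $q=\infty$ case and the passage between $\ell^2$ sums over $k$ and $\ell^q$ sums over $j$, where one invokes the discrete Hardy inequality exactly as in the proof of Theorem \ref{projections}. None of these steps is deep; the content is entirely in the finite-overlap bookkeeping and the uniformity of the frame constants $a,b$ in $j$.
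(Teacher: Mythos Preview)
Your overall strategy matches the paper's: apply the level-$j$ frame inequality to a bandlimited piece of $f$ and then invoke Theorem~\ref{projections}. The paper's proof is a two-liner: it simply plugs $F_j(L_{\mathcal{M}})f\in PW_{[2^{j-1},2^{j+1})}(L_{\mathcal{M}}^{1/2})$ into the frame inequality and reads off a two-sided comparison between $\|F_j(L_{\mathcal{M}})f\|_{X^2}^2$ and $\sum_k|\langle f,\Phi_k^j\rangle|^2$.

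Where you differ is exactly the point you flag as an obstacle: the paper silently treats $\langle F_j(L_{\mathcal{M}})f,\Phi_k^j\rangle$ and $\langle f,\Phi_k^j\rangle$ as interchangeable, while you (correctly) note that $F_j$ is not identically $1$ on the band, so some bridge is needed. Your clustering idea works, but it is more tangled than necessary; the clean way is to use the orthogonal spectral projection $P_j=\chi_{[2^{j-1},2^{j+1})}(L_{\mathcal{M}}^{1/2})$. Since each $\Phi_k^j$ lies in that band one has $\langle f,\Phi_k^j\rangle=\langle P_jf,\Phi_k^j\rangle$, so the frame inequality gives $a\|P_jf\|^2\le\sum_k|\langle f,\Phi_k^j\rangle|^2\le b\|P_jf\|^2$ directly. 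Then $\|F_j(L_{\mathcal{M}})f\|\le\|P_jf\|$ (because $0\le F_j\le1$) and $\|P_jf\|^2\le\sum_{|i-j|\le2}\|F_i(L_{\mathcal{M}})f\|^2$ (because $\sum_iF_i^2\equiv1$), and the finite-overlap reindexing you describe finishes it. Your remark about needing the discrete Hardy inequality here is misplaced: Hardy is used in the proof of Theorem~\ref{projections}, not in passing from $\|F_j(L_{\mathcal{M}})f\|$ to the frame coefficients; all that is needed at this stage is the bounded-overlap bookkeeping and $2^{(j\pm C)\alpha}\asymp2^{j\alpha}$.
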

\begin{proof}
For   $f\in X^{2}$  and operator $F_{j}(L_{\mathcal{M}})$
we have  according to (\ref{frame})
\begin{equation}
a\left \|F_j (L_{\mathcal{M}}) f\right \|_{X^{2}}^{2}\leq
\sum_k\left|\left< F_{j} (L_{\mathcal{M}}) f, \Phi^{j}_{k}\right>_{X^{2}}\right|^{2}\leq b
\left\|F_j (L_{\mathcal{M}}) f\right \|_{X^{2}}^{2},
\end{equation}
 and then by means of (\ref{projections}) we obtain the statement.
 Theorem is proven.
\end{proof}

\section{Appendix 1.}\label{App1}

\subsection{$K$-functional and interpolation spaces}
\noindent
The goal of the section is to introduce basic notions of the theory of interpolation spaces  \cite{BB},  \cite{BL}, \cite{KPS}, \cite{T}, and  approximation spaces  \cite{BL}, \cite{BSch}, \cite{KP}, \cite{PS}.  
It is important to realize that the relations between
interpolation and approximation spaces cannot be described
in  the language of normed spaces. We have to make use of
quasi-normed linear spaces in order to treat them
simultaneously.

A quasi-norm $\|\cdot\|_{\bf E}$ on linear space $\bf E$ is
a real-valued function on $\bf E$ such that for any
$f,f_{1}, f_{2} \in \EB$ the following holds true:  
\begin{enumerate}

\item $\|f\|_{\bf E}\geq 0;\>\>\>$

\item $\|f\|_{\EB}=0  \Longleftrightarrow   f=0;\>\>\>$

\item $\|-f\|_{\EB}=\|f\|_{\EB};\>\>\>$

\item there exists some $C_{\EB} \geq 1$ such that
$\|f_{1}+f_{2}\|_{\EB}\leq C_{\EB}(\|f_{1}\|_{\EB}+\|f_{2}\|_{\EB}).\>\>$

\end{enumerate}
Two quasi-normed linear spaces $\EB$ and $\FB$ form a
pair if they are linear subspaces of a common linear space
$\AB$ and the conditions
$\|f_{k}-g\|_{\EB}\rightarrow 0,$ and
$\|f_{k}-h\|_{\FB}\rightarrow 0$
imply equality $g=h$ (in $\AB$).
For any such pair $\EB,\FB$ one can construct the
space $\EB \cap \FB$ with quasi-norm
$$
\|f\|_{\EB \cap \FB}=\max\left(\|f\|_{\EB},\|f\|_{\FB}\right)
$$
and the sum of the spaces,  $\EB + \FB$ consisting of all sums $f_0+f_1$ with $f_0 \in \EB, f_1 \in \FB$, and endowed with the quasi-norm
$$
\|f\|_{\EB + \FB}=\inf_{f=f_{0}+f_{1},f_{0}\in \EB, f_{1}\in
\FB}\left(\|f_{0}\|_{\EB}+\|f_{1}\|_{\FB}\right).
$$

Quasi-normed spaces $\HB$ with
$\EB \cap \FB \subset \HB \subset \EB + \FB$
are called intermediate between $\EB$ and $\FB$.
If both $E$ and $F$ are complete the
inclusion mappings are automatically continuous. %HGFei
An additive homomorphism $T: \EB \rightarrow \FB$
is called bounded if
$$
\|T\|=\sup_{f\in \EB,f\neq 0}\|Tf\|_{\FB}/\|f\|_{\EB}<\infty.
$$
An intermediate quasi-normed linear space $\HB$
interpolates between $\EB$ and $\FB$ if every bounded homomorphism $T:
\EB+\FB \rightarrow \EB + \FB$
which is a bounded homomorphism of $\EB$ into
$\EB$ and a bounded homomorphism of $\FB$ into $\FB$
is also a bounded homomorphism of $\HB$ into $\HB$.
On $\EB+\FB$ one considers the so-called Peetre's $K$-functional
$$
K(f, t)=K(f, t,\EB, \FB)=\inf_{f=f_{0}+f_{1},f_{0}\in \EB,
f_{1}\in \FB}\left(\|f_{0}\|_{\EB}+t\|f_{1}\|_{\FB}\right).\label{K}
$$
The quasi-normed linear space $(\EB,\FB)^{K}_{\theta,q}$,
with parameters $0<\theta<1, \,
0<q\leq \infty$,  or $0\leq\theta\leq 1, \, q= \infty$,
is introduced as the set of elements $f$ in $\EB+\FB$ for which
\begin{equation}
\|f\|_{\theta,q}=\left(\int_{0}^{\infty}
\left(t^{-\theta}K(f,t)\right)^{q}\frac{dt}{t}\right)^{1/q} < \infty .\label{Knorm}
\end{equation}

It turns out that $(\EB,\FB)^{K}_{\theta,q}$
% , 0<\theta<1, 0\leq q\leq
% \infty,$ or $0\leq\theta\leq 1,  q= \infty,$
% redundant!! hgfei
with the quasi-norm
(\ref{Knorm})  interpolates between $\EB$ and $\FB$.

\subsection{Approximation spaces}

Let us introduce another functional on $\EB+\FB$,
where $\EB$ and $\FB$ form a pair of quasi-normed linear spaces
$$
\mathcal{E}(f, t)=\mathcal{E}(f, t, \mathbf{E},  \mathbf{F})=\inf_{g\in \FB,
\|g\|_{\FB}\leq t}\|f-g\|_{\EB}.
$$
\begin{defn}
The approximation space $\mathcal{E}_{\alpha,q}(\EB, \FB),
0<\alpha<\infty, 0<q\leq \infty $ is the quasi-normed linear spaces
of all $f\in \EB+\FB$ for which the quasi-norm
\begin{equation}
\| f \|_{\mathcal{E}_{\alpha,q}(\EB, \FB)} = \left(\int_{0}^{\infty}\left(t^{\alpha}\mathcal{E}(f,
t)\right)^{q}\frac{dt}{t}\right)^{1/q}
\end{equation}
is finite.
\end{defn}
The next two theorems   represent a very abstract version of what is  known as a Direct and an Inverse  Approximation Theorems \cite{PS, BL}. In the form it is stated below they were proved in \cite{KP}.

\begin{thm}\label{Direct}
 Suppose that $\mathcal{T}\subset \FB \subset \EB$ are quasi-normed
linear spaces and $\EB$ and $\FB$ are complete.
If there exist $C>0$ and $\beta >0$ such that
the following Jackson-type inequality is satisfied
$
t^{\beta}\mathcal{E}(t,f,\mathcal{T},\EB)\leq C\|f\|_{\FB},
\>\>t>0, \>\> f \in \FB,$
 then the following embedding holds true
\begin{equation}\label{imbd-1}
(\EB,\FB)^{K}_{\theta,q}\subset
\mathcal{E}_{\theta\beta,q}(\EB, \mathcal{T}), \quad \>0<\theta<1, \>0<q\leq \infty.
\end{equation}
\end{thm}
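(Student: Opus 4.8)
The plan is to reduce everything to a comparison between the $K$-functional $K(f,t,\EB,\FB)$ and the approximation functional $\mathcal{E}(f,t,\EB,\mathcal{T})$, and then to apply the definitions of the two quasi-normed spaces together with an elementary change of variables in the defining integrals. First I would fix $f\in(\EB,\FB)^{K}_{\theta,q}$ and, for a given $t>0$, choose a near-optimal decomposition $f=f_{0}+f_{1}$ with $f_{0}\in\EB$, $f_{1}\in\FB$ realizing (up to a factor $2$) the infimum defining $K(f,t^{\beta},\EB,\FB)$, so that $\|f_{0}\|_{\EB}+t^{\beta}\|f_{1}\|_{\FB}\le 2K(f,t^{\beta},\EB,\FB)$. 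Since $f_{1}\in\FB\subset\mathcal{T}$, the Jackson-type hypothesis applied to $f_{1}$ at the scale $s=\big(\|f_{1}\|_{\FB}\big)^{?}$ — more precisely, applied so that the competitor element has $\mathcal{T}$-quasi-norm controlled by $t$ — yields an element $g\in\mathcal{T}$ with $\|f_{1}-g\|_{\EB}\le C t^{-\beta}\|f_{1}\|_{\FB}$ whenever $\|g\|_{\mathcal{T}}\le t$.

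Next I would estimate $\mathcal{E}(f,t,\EB,\mathcal{T})$ by using $g$ as the approximant: by the quasi-triangle inequality in $\EB$,
\begin{equation}
\mathcal{E}(f,t,\EB,\mathcal{T})\le \|f-g\|_{\EB}\le C_{\EB}\big(\|f_{0}\|_{\EB}+\|f_{1}-g\|_{\EB}\big)\le C_{\EB}\big(\|f_{0}\|_{\EB}+C t^{-\beta}\|f_{1}\|_{\FB}\big).
\end{equation}
Absorbing constants, this gives $\mathcal{E}(f,t,\EB,\mathcal{T})\le C' t^{-\beta}\big(\|f_{0}\|_{\EB}t^{\beta}+\|f_{1}\|_{\FB}\big)\le C'' t^{-\beta}K(f,t^{\beta},\EB,\FB)$ (here one uses $t^{\beta}\|f_{0}\|_{\EB}\ge \|f_{0}\|_{\EB}$ only for $t\ge 1$; for $t\le1$ one instead bounds $\mathcal{E}$ by $\|f\|_{\EB}\le$ const $K(f,1)$, or one simply notes $t^{\beta}\mathcal{E}\le$ const and the contribution of $t\le 1$ to the $\mathcal{E}_{\theta\beta,q}$ norm is harmless). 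Raising to the $q$-th power, multiplying by $t^{\theta\beta q}$, and integrating $dt/t$, the substitution $\tau=t^{\beta}$ (so $d\tau/\tau=\beta\,dt/t$) converts $\int_{0}^{\infty}\big(t^{\theta\beta}\mathcal{E}(f,t)\big)^{q}\tfrac{dt}{t}$ into a constant times $\int_{0}^{\infty}\big(\tau^{-\theta}K(f,\tau)\big)^{q}\tfrac{d\tau}{\tau}$, which is finite precisely because $f\in(\EB,\FB)^{K}_{\theta,q}$. This establishes both the finiteness of the $\mathcal{E}_{\theta\beta,q}(\EB,\mathcal{T})$ quasi-norm and the continuity of the embedding (\ref{imbd-1}), with the obvious modification $\sup$ in place of the integral when $q=\infty$.

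The main obstacle — and the only point requiring genuine care rather than bookkeeping — is the bridging step: the Jackson hypothesis is phrased in terms of $\mathcal{E}(t,f,\mathcal{T},\EB)$, i.e.\ the best approximation by elements of $\mathcal{T}$ of quasi-norm $\le t$, and one must align the parameter $t$ appearing there with the parameter of the $K$-functional. Concretely, one applies the hypothesis not to a fixed $t$ but chooses the admissible level $s$ so that the resulting competitor has $\mathcal{T}$-quasi-norm at most $t$; the homogeneity built into the quasi-norm $\|\cdot\|_{\mathcal{T}}$ on $\mathcal{T}$ (coming, in our concrete settings, from the dilation structure of the Bernstein/Paley--Wiener scale) is what makes this rescaling legitimate. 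Once this matching is done correctly, the constants $C_{\EB},C,\beta$ propagate multiplicatively and the change of variables $\tau=t^{\beta}$ does the rest. I would also remark that this is exactly the abstract scheme of \cite{KP}, so in the paper itself it suffices to cite that reference; the sketch above records why it applies verbatim in the present generality.
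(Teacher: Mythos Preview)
The paper does not actually prove Theorem~\ref{Direct}: it merely states the result and attributes the proof to \cite{KP}. Your sketch is the standard argument one finds there (split $f=f_0+f_1$ via the $K$-functional, apply the Jackson hypothesis to the $\FB$-piece, then change variables in the integral), so in that sense you are on the right track and aligned with what the paper defers to.

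That said, your execution has a concrete slip. You take the decomposition near-optimal for $K(f,t^{\beta})$ and then claim $\mathcal{E}(f,t)\le C'' t^{-\beta}K(f,t^{\beta})$. This does not follow: from $\|f_0\|_{\EB}+Ct^{-\beta}\|f_1\|_{\FB}$ you cannot reach $t^{-\beta}(\|f_0\|_{\EB}+t^{\beta}\|f_1\|_{\FB})$ uniformly in $t$, and your own parenthetical about $t\ge 1$ versus $t\le 1$ is a symptom of the mismatch. The clean fix is to choose the decomposition near-optimal for $K(f,t^{-\beta})$ instead; then the Jackson bound $\|f_1-g\|_{\EB}\le Ct^{-\beta}\|f_1\|_{\FB}$ lines up perfectly with the $K$-functional weight, yielding $\mathcal{E}(f,t)\le C'\,K(f,t^{-\beta})$ with no case distinction. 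The substitution $\tau=t^{-\beta}$ (so $d\tau/\tau=-\beta\,dt/t$ and $\tau^{-\theta}=t^{\theta\beta}$) then converts the $\mathcal{E}_{\theta\beta,q}$ integral directly into the $(\EB,\FB)^K_{\theta,q}$ integral.

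Two smaller remarks. First, you write ``$f_1\in\FB\subset\mathcal{T}$'', but the inclusion goes the other way ($\mathcal{T}\subset\FB$); fortunately this is irrelevant, since the Jackson hypothesis only requires $f_1\in\FB$. Second, your final paragraph about needing to ``rescale'' to align the level $t$ with the $\mathcal{T}$-quasi-norm is unnecessary: the Jackson hypothesis, as stated, already delivers at every level $t>0$ an approximant $g$ with $\|g\|_{\mathcal{T}}\le t$ and $\|f_1-g\|_{\EB}\le Ct^{-\beta}\|f_1\|_{\FB}$, so no homogeneity or dilation argument is needed.
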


\begin{thm}\label{Inverse}
If there exist $C>0$ and $\beta>0$
such that
the following Bernstein-type inequality holds
$
\|f\|_{\FB}\leq C\|f\|^{\beta}_{\mathcal{T}}\|f\|_{\EB}
,\>\> f\in \mathcal{T},
$
then the following embedding holds true
\begin{equation}\label{imbd-2}
\mathcal{E}_{\theta\beta, q}(\EB, \mathcal{T})\subset
(\EB, \FB)^{K}_{\theta, q}  , \quad 0<\theta<1, \>0<q\leq \infty.
\end{equation}
\end{thm}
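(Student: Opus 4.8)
The plan is to estimate the Peetre $K$-functional $K(f,t,\EB,\FB)$ pointwise in $t$ by the approximation errors and then convert this into the norm inequality $\|f\|_{(\EB,\FB)^K_{\theta,q}}\le C\|f\|_{\mathcal{E}_{\theta\beta,q}(\EB,\mathcal{T})}$ via a discrete Hardy inequality. Write $E_n=\mathcal{E}(f,2^n,\EB,\mathcal{T})$; this sequence is non-increasing, and after the standard dyadic discretization of the defining integral (valid since $\mathcal{E}(f,\cdot)$ is monotone) finiteness of the approximation norm is comparable to $(\sum_n(2^{n\theta\beta}E_n)^q)^{1/q}<\infty$, which forces $E_n\to0$ because $\theta\beta>0$. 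For each $n\in\mathbb{Z}$ I select a near-best element $g_n\in\mathcal{T}$ with $\|g_n\|_{\mathcal{T}}\le 2^n$ and $\|f-g_n\|_{\EB}\le 2E_n$; taking the competitor $g=0$ shows $E_n\le\|f\|_{\EB}$, so the quasi-norms $\|g_n\|_{\EB}$ remain bounded in $n$.

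I would then control the telescoping differences $d_n=g_{n+1}-g_n\in\mathcal{T}$. The quasi-triangle inequalities in $\EB$ and in $\mathcal{T}$ give $\|d_n\|_{\EB}\lesssim E_n$ (using $E_{n+1}\le E_n$) and $\|d_n\|_{\mathcal{T}}\lesssim 2^n$, and feeding these into the hypothesized Bernstein inequality $\|d_n\|_{\FB}\le C\|d_n\|_{\mathcal{T}}^{\beta}\|d_n\|_{\EB}$ produces the decisive bound $\|d_n\|_{\FB}\lesssim 2^{n\beta}E_n$. Because $\|g_n\|_{\mathcal{T}}\le 2^n\to0$ while $\|g_n\|_{\EB}$ stays bounded, the same Bernstein inequality applied to $g_n$ itself gives $\|g_n\|_{\FB}\to0$ as $n\to-\infty$, so $g_m=\sum_{n<m}d_n$ converges in $\FB$. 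Since $\FB$ is only quasi-normed I invoke the Aoki--Rolewicz theorem to fix an exponent $\rho'\in(0,1]$, which I may additionally take $\le q$, making $\|\cdot\|_{\FB}^{\rho'}$ subadditive; summing the series then yields $\|g_m\|_{\FB}\lesssim\left(\sum_{n<m}(2^{n\beta}E_n)^{\rho'}\right)^{1/\rho'}$, while $\|f-g_m\|_{\EB}\le2E_m$ holds directly from the near-best property.

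For a given $t>0$ I choose $m=m(t)$ with $2^{-m\beta}\asymp t$ and use the admissible decomposition $f=(f-g_m)+g_m$ (legitimate because $g_m\in\mathcal{T}\subset\FB$), obtaining $K(f,t,\EB,\FB)\lesssim E_m+2^{-m\beta}\left(\sum_{n<m}(2^{n\beta}E_n)^{\rho'}\right)^{1/\rho'}$. Discretizing the $(\EB,\FB)^K_{\theta,q}$-norm along $t=2^{-m\beta}$ reduces the claim to bounding $\sum_m(2^{m\theta\beta}K(f,2^{-m\beta}))^q$. The first term contributes exactly $\sum_m(2^{m\theta\beta}E_m)^q$, i.e.\ the approximation norm. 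Writing $a_n=2^{n\theta\beta}E_n$ and $\gamma=\beta(1-\theta)>0$, the second term becomes $\sum_m2^{-m\gamma q}\left(\sum_{n<m}(2^{n\gamma}a_n)^{\rho'}\right)^{q/\rho'}$; after the substitution $z_n=a_n^{\rho'}$ this is the $\ell^{q/\rho'}$-norm of the convolution of $(z_n)$ with the summable geometric kernel $(2^{-k\gamma\rho'})_{k\ge1}$, so Young's inequality (equivalently the discrete weighted Hardy inequality cited from \cite{BB}) bounds it by $\sum_n a_n^q$. Here the choice $\rho'\le q$ guarantees $q/\rho'\ge1$, and the strict positivity of $\gamma$ --- equivalently $\theta<1$ --- is what makes the kernel summable. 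Combining the two contributions gives $\|f\|_{(\EB,\FB)^K_{\theta,q}}\le C\|f\|_{\mathcal{E}_{\theta\beta,q}(\EB,\mathcal{T})}$ and hence the embedding (\ref{imbd-2}), with the usual supremum modification when $q=\infty$.

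The step I expect to be the main obstacle is reconciling the quasi-norm structure with the infinite telescoping sum: lacking a genuine triangle inequality in $\FB$, one must thread the Aoki--Rolewicz exponent $\rho'$ consistently through both the convergence of $g_m=\sum_{n<m}d_n$ and the final Hardy/Young estimate, and verify that raising to the power $\rho'$ does not destroy summability --- which it does not, precisely because $\gamma>0$ keeps the geometric kernel in $\ell^1$. A secondary delicate point is the behavior as $n\to-\infty$, where boundedness of $\|g_n\|_{\EB}$ together with $\|g_n\|_{\mathcal{T}}\to0$ is needed to guarantee that $g_m$ is genuinely represented by the differences below level $m$.
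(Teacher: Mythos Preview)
The paper does not actually prove Theorem~\ref{Inverse}; it merely states it in Appendix~1 as a known abstract result, citing \cite{KP} and \cite{PS}. So there is no ``paper's own proof'' to compare against.

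Your argument is the standard Peetre--Sparr telescoping proof and is essentially correct. The chain --- near-best approximants $g_n$, Bernstein bound $\|d_n\|_{\FB}\lesssim 2^{n\beta}E_n$ on the differences, Aoki--Rolewicz to sum the series in the quasi-normed space $\FB$, then a discrete Hardy/Young estimate with weight exponent $\gamma=\beta(1-\theta)>0$ --- is exactly how this theorem is proved in the references the paper cites. Your observation that one may always shrink the Aoki--Rolewicz exponent $\rho'$ below $q$ (since subadditivity of $\|\cdot\|^{\rho}$ persists for smaller exponents) is correct and is precisely what makes the Young inequality step clean when $0<q<1$. One small point: you implicitly use the inclusions $\mathcal{T}\subset\FB\subset\EB$ (so that $g_m\in\FB$ and the decomposition $f=(f-g_m)+g_m$ is admissible for the $K$-functional); the paper states these hypotheses only in the companion Direct theorem, but they are clearly intended to carry over, and your use of them is legitimate.
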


\subsection{Frames in Hilbert spaces}

A family of vectors $\{\Phi_{j}\}$  in a Hilbert space $\mathbf{H}$ is called a frame if there exist constants
$A, B>0$ such that
\begin{equation}
A\|f\|^{2}_{\mathbf{H}}\leq \sum_{j}\left|\left<f,\Phi_{j}\right>\right|^{2}    \leq B\|f\|_{\mathbf{H}} ^{2} \quad \mbox{for all} \quad f\in \mathbf{H}.
\end{equation}
The largest $A$ and smallest $B$ are called lower and upper frame bounds.

The family of scalars $\{\left<f,\Phi_{j}\right>\}$
represents a set of measurements of a vector $f$.
In order to reconstruct  the vector $f$
from this collection  of measurements in a linear way
one has to find another
(dual) frame $\{\Psi_{j}\}$.
Then a reconstruction formula is
$$
f=\sum_{j}\left<f,\Phi_{j}\right>\Psi_{j}.
$$
Dual frames are not unique in general.
Moreover it may be difficult to find a dual frame in concrete
situations.  %hgfei
If $A=B=1$ the frame is said to be  tight   or Parseval.
Parseval frames are similar in many respects to orthonormal wavelet bases.  For example, if in addition all vectors $\Phi_{j}$ are unit vectors, then the frame is an  orthonormal basis.
The main feature of Parseval frames is that
decomposing
and synthesizing a vector from known data are tasks carried out with
the same family of functions, i.e., the Parseval frame is its own dual frame.

For more details on Interpolation and Approximation spaces see \cite{BB}, \cite{BL}, \cite{BSch}, \cite{KPS}, \cite{KP}, \cite{L-P}, \cite{PS}, \cite{ T}.

\section{Appendix 2. }\label{App2}

Let $D$ be a generator of a strongly continuous one parameter group $T(t),\>t\in \mathbb{R},$ of isometries of a Banach space $\mathbb{B}$ with the norm $\|\cdot \|$. Let $\mathcal{D}^{\infty}=\cap_{m\in \mathbb{N}}\mathcal{D}(D^{m})$, where $\mathcal{D}(D^{m})$  is the domain of $D^{m}.$ 
Let $\mathbb{B}_{\sigma}(D)$ be the space of all $f\in \mathbb{B}$ for which the Bernstein-type inequality holds, i.e. 
\begin{equation}\label{Bern000}
\|D^{k}f\|\leq \sigma^{k}\|f\|,\>k\in \mathbb{N}.
\end{equation}
The following statements \cite{Pes00}, \cite{Pes15},  imply  Lemma \ref{BernstCriterion} and Lemma \ref{BernstConstruction} respectively.

\begin{lem}\label{BernstCriterion-Ap}

A function $f\in \mathcal{D}^{\infty}(D)$ belongs to  $\mathbb{B}_{\sigma}(D), \>\>\sigma> 0 ,$ if and only if
 the quantity
\begin{equation}\label{Lem}
\sup_{k\in N} \sigma ^{-k}\|D^{k}f\|=R(f,\sigma)
\end{equation}
 is finite.
\end{lem}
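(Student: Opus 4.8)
The plan is to prove the equivalence in both directions, exploiting the fact that $D$ generates a group of isometries $T(t)$ and that $f \in \mathcal{D}^\infty(D)$, so all the powers $D^k f$ are well-defined and the exponential series built from them converge in $\mathbb{B}$.

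First I would handle the easy direction. If $f \in \mathbb{B}_\sigma(D)$, then by the very definition (\ref{Bern000}) one has $\sigma^{-k}\|D^k f\| \le \|f\|$ for every $k \in \mathbb{N}$, so the supremum in (\ref{Lem}) is bounded by $\|f\|$ and in particular finite. This gives $R(f,\sigma) \le \|f\| < \infty$ immediately.

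For the converse, assume $R = R(f,\sigma) < \infty$, i.e. $\|D^k f\| \le R\,\sigma^k$ for all $k$. The key idea is to pass to the group: for $g \in X^2$ (or rather $g$ in the dual, or simply testing against functionals via Hahn--Banach), consider the vector-valued function $t \mapsto T(t)f$. Since $f \in \mathcal{D}^\infty(D)$, this function is smooth with $\frac{d^k}{dt^k}T(t)f = T(t)D^k f$, and the growth bound $\|T(t)D^k f\| = \|D^k f\| \le R\sigma^k$ (using that $T(t)$ is an isometry) shows that for every bounded linear functional $\phi$ on $\mathbb{B}$ the scalar function $t \mapsto \phi(T(t)f)$ extends to an entire function of exponential type at most $\sigma$, bounded on $\mathbb{R}$ by $\|\phi\|\,\|f\|$. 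Now apply the classical Bernstein inequality on $\mathbb{R}$ to this scalar function: $|\phi(T(t)D^k f)| = \left|\frac{d^k}{dt^k}\phi(T(t)f)\right| \le \sigma^k \sup_{t}|\phi(T(t)f)| \le \sigma^k \|\phi\|\,\|f\|$. Taking the supremum over $\phi$ with $\|\phi\| \le 1$ and evaluating at $t=0$ yields $\|D^k f\| \le \sigma^k \|f\|$ for all $k$, which is exactly (\ref{Bern000}), so $f \in \mathbb{B}_\sigma(D)$.

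The main obstacle is the justification that $t \mapsto \phi(T(t)f)$ genuinely has an entire extension of exponential type $\sigma$ — one must either invoke a Taylor series argument (the series $\sum_k \frac{t^k}{k!}\phi(D^k f)$ converges for all complex $t$ because $|\phi(D^k f)| \le R\sigma^k$, and defines the extension) together with a Phragmén--Lindelöf / Bernstein-type estimate on $\mathbb{R}$, or quote the operator-theoretic Paley--Wiener characterization already used elsewhere in the paper (cf. the proof of Theorem \ref{PWproprties}). A minor technical point is that $\mathbb{B}$ need not be reflexive, but testing against functionals and using $\|v\| = \sup_{\|\phi\|\le 1}|\phi(v)|$ circumvents this. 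Once the entire extension and its type are in hand, the reduction to the scalar Bernstein inequality on $\mathbb{R}$ (available from \cite{N}) closes the argument.
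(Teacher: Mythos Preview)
The paper does not actually prove Lemma \ref{BernstCriterion-Ap}; it merely states the result and refers to \cite{Pes00}, \cite{Pes15}. Your proposal supplies a correct proof, and the method you use --- pairing with a functional, passing to the scalar function $t\mapsto \phi(T(t)f)$, establishing that it is entire of exponential type $\sigma$ bounded on $\mathbb{R}$, applying the classical scalar Bernstein inequality from \cite{N}, and then taking the supremum over $\|\phi\|\le 1$ via Hahn--Banach --- is exactly the technique the paper itself employs in the proofs it \emph{does} give (see the proof of Lemma \ref{KSM-Ap} for Kolmogorov--Stein and the sketch for Theorem \ref{PWproprties}). So your approach is fully in line with the paper's methodology.

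Your handling of the one point you flag as an obstacle is also fine: the uniform bound $\|T(t)D^k f\|=\|D^k f\|\le R\sigma^{k}$ makes the Lagrange remainder in Taylor's formula go to zero, so $\phi(T(t)f)$ is real-analytic and coincides with its Taylor series $\sum_k \tfrac{t^k}{k!}\phi(D^k f)$, which clearly extends to an entire function of exponential type at most $\sigma$. That closes the gap without needing any additional machinery.
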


\begin{thm}\label{BernstConstruction-Ap} If $T$ is a strongly continuous one parameter group $T(t),\>t\in \mathbb{R},$ of isometries generated by $D$ and  
 $\mu\in {\bf B}^{1}_{\sigma}(\mathbb{R}), \>\>\|\mu\|_{L_{1}(\mathbb{R})}=1,$ be an entire function of exponential
type $\sigma$ then for any $f\in \mathbb{B}$ the function
$$
P_{ \sigma}f=\int _{-\infty}^{\infty}\mu(t)T(t)fdt
$$
belongs to $\mathbb{B}_{\sigma}^{p}(D).$  The integral here is understood in the sense of Bochner for the abstract-valued function $\mu(t)T(t)f: \mathbb{R}\mapsto \mathbb{B}$.

\end{thm}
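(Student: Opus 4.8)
The plan is to reduce the statement to Lemma \ref{BernstCriterion-Ap}: it suffices to show that $g:=P_{j,\sigma}f$ lies in $\mathcal{D}^{\infty}(D)$ and that $\sup_{k\in\mathbb{N}}\sigma^{-k}\|D^{k}g\|<\infty$, for then the lemma upgrades this to $\|D^{k}g\|\le\sigma^{k}\|g\|$ for all $k$, i.e. $g\in\mathbb{B}_{\sigma}(D)$. The first thing I would check is that the Bochner integral defining $g$ is meaningful: by strong continuity $t\mapsto T(t)f$ is continuous, hence strongly measurable, and $\|p(t)T(t)f\|=|p(t)|\,\|f\|$ belongs to $L^{1}(\mathbb{R})$ since $p\in L^{1}(\mathbb{R})$ and each $T(t)$ is an isometry; thus $g\in\mathbb{B}$ with $\|g\|\le\|p\|_{L^{1}(\mathbb{R})}\|f\|$.

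The heart of the argument is the identity $D^{k}g=(-1)^{k}\int_{-\infty}^{\infty}p^{(k)}(t)\,T(t)f\,dt$ for every $k\in\mathbb{N}$, which I would establish by induction starting from $k=1$. Using that $T$ is a group and changing variables, one gets $h^{-1}(T(h)-I)g=\int_{-\infty}^{\infty}\frac{p(t-h)-p(t)}{h}\,T(t)f\,dt$. Since $p\in\mathbf{B}^{1}_{\sigma}(\mathbb{R})$ it is absolutely continuous with $p'\in L^{1}(\mathbb{R})$ (this is the classical $L^{1}$ Bernstein inequality, which is part of the definition of $\mathbf{B}^{1}_{\sigma}(\mathbb{R})$), so the difference quotients $h^{-1}\bigl(p(\cdot-h)-p(\cdot)\bigr)$ converge to $-p'$ in $L^{1}(\mathbb{R})$ as $h\to0$ by continuity of translation in $L^{1}(\mathbb{R})$. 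Consequently $\bigl\|h^{-1}(T(h)-I)g+\int p'(t)T(t)f\,dt\bigr\|\le\|f\|\,\bigl\|h^{-1}(p(\cdot-h)-p(\cdot))+p'\bigr\|_{L^{1}(\mathbb{R})}\to0$, which shows $g\in\mathcal{D}(D)$ and $Dg=-\int p'(t)T(t)f\,dt$. Because $p'$ is again an entire function of exponential type $\sigma$ lying in $L^{1}(\mathbb{R})$, i.e. $p'\in\mathbf{B}^{1}_{\sigma}(\mathbb{R})$, the very same step applies with $p$ replaced by $p'$, and the induction delivers $g\in\mathcal{D}^{\infty}(D)$ together with the asserted formula for $D^{k}g$.

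Finally, taking norms and invoking the $L^{1}$ Bernstein inequality once more, $\|D^{k}g\|\le\int|p^{(k)}(t)|\,\|T(t)f\|\,dt=\|p^{(k)}\|_{L^{1}(\mathbb{R})}\|f\|\le\sigma^{k}\|p\|_{L^{1}(\mathbb{R})}\|f\|$, so $\sup_{k}\sigma^{-k}\|D^{k}g\|\le\|p\|_{L^{1}(\mathbb{R})}\|f\|<\infty$, and Lemma \ref{BernstCriterion-Ap} completes the proof. I expect the only genuine obstacle to be the rigorous passage to the limit in the difference-quotient computation, namely establishing $g\in\mathcal{D}(D)$ with $Dg=-\int p'(t)T(t)f\,dt$; the rest is bookkeeping that relies on the uniform (isometric) boundedness of $T(t)$ and on the self-improving nature of the Bernstein inequality that is already built into membership in $\mathbf{B}^{1}_{\sigma}(\mathbb{R})$.
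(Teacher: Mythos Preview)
Your argument is correct, but there is nothing in the paper to compare it to: the paper does not prove Theorem~\ref{BernstConstruction-Ap} in the text---it merely records the statement in Appendix~2 with a citation to \cite{Pes00}, \cite{Pes15}. That said, your route is exactly the one the paper's layout anticipates: Lemma~\ref{BernstCriterion-Ap} is placed immediately before the theorem to serve as the reduction you invoke, and the identity $D^{k}g=(-1)^{k}\int p^{(k)}(t)\,T(t)f\,dt$, obtained by pushing the group difference quotient through the Bochner integral and using the $L^{1}$-convergence of the difference quotients of $p$, is the standard mechanism in the cited sources. The one step you flag as needing care, namely $h^{-1}\bigl(p(\cdot-h)-p(\cdot)\bigr)\to -p'$ in $L^{1}(\mathbb{R})$, is handled cleanly by writing the increment as $-h^{-1}\int_{0}^{h}p'(\cdot-s)\,ds$ and using continuity of translation in $L^{1}$; the rest of your proof is routine and complete.
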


\subsection{Landau-Kolmogorov-Stein  inequality}

Let us introduce the Krein-Favard constants (see \cite{A}, Ch. V)
which are defined as
$$
K_{j}=\frac{4}{\pi}\sum_{r=0}^{\infty}\frac{(-1)^{r(j+1)}}{(2r+1)^{j+1}},\>\>\>
j,\>\>r\in \mathbb{N}.
$$
It is known, that the sequence of all Krein-Favard
constants with even indices is strictly increasing and belongs to
the interval $[1,4/ \pi)$ and the sequence of all Krein-Favard constants
with odd indices is strictly decreasing and belongs to the
interval $(\pi/4, \pi/2],$ i.e.,
\begin{equation}
K_{2j}\in [1,4/ \pi), \; K_{2j+1}\in (4/\pi, \pi/2],\label{Fprop}
\end{equation}
or
$$
1=K_{0}\leq K_{2}\leq ...  <4/\pi< ... \leq K_{3}\leq K_{1}= \pi/2.
$$
In what follows the constant
$$
C_{k,m}= (K_{m-k})^m/(K_{m})^{m-k}\leq (\pi/2)^{m}
$$
is playing an important role.
We are going to prove a generalization of the classical
Landau-Kolmogorov-Stein inequality \cite{L, Ko, Stein}. This  inequality was
first proved by Landau and Kolmogorov for $L^\infty (\mathbb{R})$ and later extended
to $L^p(\mathbb{R})$ for $1\leq p < \infty$ by Stein  (see also \cite{ Pes08}).

\begin{lem}\label{KSM-Ap} If $f\in \mathcal{D}(D^{m})$
 then
the following inequality holds
\begin{equation}\label{LKS}
\left\|D^{k}f\right\|^m \leq C_{k,m}\|D^{m}f\|^{k}\|f\|^{m-k},\>\>\>
0\leq k \leq m.
\end{equation}

\end{lem}
\begin{proof}
Indeed, for any $h^{*}\in \mathbb{B}^{*}$ the Kolmogorov inequality \cite {Stein}
applied to the entire function $\left<T(t)f,h^{*}\right>$ gives
$$
\left\|\left(\frac{d}{dt}\right)^{k}\left<T(t)f,h^{*}\right>\right\|^m_{
C(\mathbb{R}^{1})}\leq
C_{k,m}\left\|\left(\frac{d}{dt}\right)^{n}\left<T(t)f,h^{*}\right>\right\|_{
C(\mathbb{R}^{1})}^{k}\times
$$
$$
\left\|\left<T(t)f,h^{*}\right>\right\|_{C(\mathbb{R}^{1})}^{m-k},\quad
0<k< m,
$$
or
$$
\left\|\left<T(t)D^{k}f,h^{*}\right>\right\|_{ C(\mathbb{R}^{1})}^m\leq
C_{k,m}\left\|\left<T(t)D^{m}f,h^{*}\right>\right\|_{
C(\mathbb{R}^{1})}^{k}\left\|\left<T(t)f,h^{*}\right>\right\|_{C(\mathbb{R}^{1})}^{m-k}.
$$
We obtain
\begin{align*}
\left\|\left<T(t)D^{k}f,h^{*}\right>\right\|_{ C(\mathbb{R}^{1})}^m & \leq
C_{k,m}\|h^{*}\|^{k}\|D^{m}f\|^{k}\|h^{*}\|^{m-k}\|f\|^{m-k}\\
&\leq C_{k,m}\|h^{*}\|^m \|D^{m}f\|^{k}\|f\|^{m-k},
\end{align*}
which yields  when $t=0$ 
$$
\left|\left<D^{k}f,h^{*}\right>\right|^m\leq C_{k,m}\|h^{*}\|^m
\|D^{m}f\|^{k}\|f\|^{m-k}.
$$
By choosing $h$ such that $\left|<D^{k}f,h^{*}>\right|=\|D^{k}f\|$ and
$\|h^{*}\|=1$  we obtain (\ref{LKS}).
\end{proof}

\subsection{A characterization of  $\mathbb{B}_{\sigma}(D)$}

For a vector $f\in \cup_{\sigma>0}\mathbb{B}_{\sigma}(D),$ the $\sigma_{f}$ will denote the smalest $\sigma$ for which the inequality ({\ref{Bern000}) holds. 

\begin{thm}\label{new-Ap}
Let $f\in \mathbb{B}$ belong to a space  $\mathbb{B}_{\sigma}(D),$ for some
$0<\sigma<\infty.$ Then the following limit exists
 \begin{equation}
 d_f=\lim_{k\rightarrow \infty} \|D^k
f\|^{1/k}, \label{limit}
\end{equation}
and $d_f=\sigma_f.$ 
Conversely, if
$f\in \mathcal{D}^{\infty}$ and $d_f=\lim_{k\rightarrow \infty}
\|D^k f\|^{1/k},$ exists and is finite, then $f\in\mathbb{B}_{d_f}(D)$
and $d_f=\sigma_f .$
\end{thm}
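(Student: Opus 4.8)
\textbf{Proof proposal for Theorem \ref{new-Ap}.}

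The plan is to establish the two directions separately, with the spectral-type identity
\begin{equation}\label{eq:plan-key}
\|D^{k}f\| = \|D\,D^{k-1}f\| \leq \sigma_{f}\,\|D^{k-1}f\|, \qquad f\in \mathbb{B}_{\sigma_{f}}(D),
\end{equation}
as the backbone. First I would treat the \emph{direct} part. Suppose $f\in\mathbb{B}_{\sigma}(D)$ for some finite $\sigma$; by definition $\sigma_{f}$ exists and $f\in\mathbb{B}_{\sigma_{f}}(D)$, so \eqref{eq:plan-key} iterates to give $\|D^{k}f\|\leq \sigma_{f}^{k}\|f\|$ for all $k$. Hence $\limsup_{k\to\infty}\|D^{k}f\|^{1/k}\leq \sigma_{f}$. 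For the reverse inequality $\liminf_{k\to\infty}\|D^{k}f\|^{1/k}\geq \sigma_{f}$, the idea is that if $\liminf$ were some $\rho<\sigma_{f}$, then along a subsequence $\|D^{k}f\|\leq (\rho')^{k}\|f\|$ (for $\rho<\rho'<\sigma_{f}$ and $k$ large in that subsequence); I would then use the group $T(t)$ to propagate this bound to \emph{all} $k$. Concretely, writing $T(t)f=\sum_{j\geq 0}\frac{t^{j}}{j!}D^{j}f$ for $f\in\mathcal{D}^{\infty}$ (the series converges in $\mathbb{B}$ because of the subsequential bound controlling the tail), one sees that $t\mapsto T(t)f$ extends to an entire $\mathbb{B}$-valued function of exponential type at most $\rho'$. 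Applying the classical Bernstein inequality on $\mathbb{R}$ to the scalar functions $\langle T(t)f,h^{*}\rangle$, $h^{*}\in\mathbb{B}^{*}$, $\|h^{*}\|=1$ (exactly as in the proof of Lemma \ref{KSM-Ap}), yields $\|D^{k}f\|\leq (\rho')^{k}\|f\|$ for \emph{every} $k$, i.e.\ $f\in\mathbb{B}_{\rho'}(D)$ with $\rho'<\sigma_{f}$, contradicting minimality of $\sigma_{f}$. Therefore $d_{f}$ exists and equals $\sigma_{f}$.

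For the \emph{converse}, assume $f\in\mathcal{D}^{\infty}$ and $d_{f}=\lim_{k\to\infty}\|D^{k}f\|^{1/k}$ exists and is finite. Then for any $\varepsilon>0$ there is $k_{0}$ with $\|D^{k}f\|\leq (d_{f}+\varepsilon)^{k}$ for $k\geq k_{0}$; combined with finiteness of the finitely many earlier terms, the power series $\sum_{j}\frac{t^{j}}{j!}D^{j}f$ converges for all $t\in\mathbb{C}$ and defines an entire $\mathbb{B}$-valued extension of $T(t)f$ of exponential type at most $d_{f}+\varepsilon$, hence of type $\leq d_{f}$. Again passing to the scalar functions $\langle T(t)f,h^{*}\rangle$ and invoking the classical Bernstein inequality on $\mathbb{R}$ in the sup-norm gives $\|D^{k}f\|\leq d_{f}^{k}\|f\|$ for all $k$, so $f\in\mathbb{B}_{d_{f}}(D)$. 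By the direct part just proved, $\sigma_{f}=d_{f}$, which completes the statement.

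The step I expect to be the main obstacle is the passage from a \emph{subsequential} decay estimate on $\|D^{k}f\|$ to an estimate valid for all $k$ in the direct part, i.e.\ justifying that the $\mathbb{B}$-valued function $T(t)f$ extends to an entire function of the correct exponential type using only growth along a subsequence of Taylor coefficients. This is precisely where one must be careful: the existence of the limit $d_{f}$ is not yet known at that point, so one cannot simply say "the coefficients decay geometrically." The clean way around this is to note that $\limsup_k \|D^k f\|^{1/k} \le \sigma_f$ (already shown) \emph{does} give geometric control of the tail of the Taylor series, so $T(t)f$ extends to an entire function of type $\le \sigma_f$; then the subsequential lower bound $\rho$ forces, via Bernstein applied to the entire extension, that the type is $\le \rho' < \sigma_f$ for all $k$, and the contradiction with minimality of $\sigma_f$ closes the argument. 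All the remaining ingredients — the Taylor expansion of $T(t)f$ on $\mathcal{D}^{\infty}$, the reduction to scalar entire functions via $\mathbb{B}^{*}$, and the classical Bernstein inequality — are standard and already used verbatim in the proof of Lemma \ref{KSM-Ap}.
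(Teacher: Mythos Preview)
Your converse direction is fine and is essentially a direct proof of Lemma~\ref{BernstCriterion-Ap}: the Taylor series for $T(t)f$ converges to an entire $\mathbb{B}$-valued function of type $\le d_f$, the scalar functions $\langle T(t)f,h^*\rangle$ are bounded on $\mathbb{R}$ (isometry of $T$), and classical Bernstein gives $\|D^kf\|\le d_f^{\,k}\|f\|$.

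The direct part, however, has a real gap. Your task is to prove that the \emph{limit} exists, i.e.\ that $\liminf_k\|D^kf\|^{1/k}=\limsup_k\|D^kf\|^{1/k}$. Your ``fix'' does not do this. The exponential type of the entire extension $t\mapsto T(t)f$ is exactly $\tau=\limsup_k\|D^kf\|^{1/k}$; knowing that some \emph{subsequence} of Taylor coefficients is small places no constraint on $\tau$ (think of $\cosh z$, whose odd coefficients vanish while the type is $1$). Bernstein's inequality runs in the direction type $\Rightarrow$ derivative bounds, so applying it to the entire extension of type $\le\sigma_f$ only returns $\|D^kf\|\le\sigma_f^{\,k}\|f\|$, which you already had. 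It cannot squeeze the type down to $\rho'<\sigma_f$ from a subsequential bound, so no contradiction with the minimality of $\sigma_f$ is produced. In short, your argument shows $\limsup=\sigma_f$ but says nothing about $\liminf$.

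What is missing is precisely the log-convexity of $k\mapsto\|D^kf\|$ supplied by the Kolmogorov--Stein inequality (Lemma~\ref{KSM-Ap}). The paper uses it as follows: from $\|D^kf\|^{m}\le C_{k,m}\|D^mf\|^{k}\|f\|^{m-k}$ one gets (normalizing $\|f\|=1$) $\|D^kf\|^{1/k}\le C_{k,m}^{1/(km)}\|D^mf\|^{1/m}$ for all $m\ge k$; letting $m\to\infty$ along any sequence yields $\|D^kf\|^{1/k}\le\liminf_m\|D^mf\|^{1/m}$, and then taking $\sup$ over $k$ gives $\limsup\le\liminf$, so the limit $d_f$ exists. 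Only after this is established does the paper argue, via Lemma~\ref{BernstCriterion-Ap}, that $d_f<\sigma_f$ is impossible. You should insert this Kolmogorov--Stein step (or an equivalent interpolation inequality) in place of the entire-function argument for the existence of the limit.
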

\begin{proof}
According to Lemma \ref{LKS} we have $$ \left\|D^{k}f\right\|^n \leq
C_{k,m}\|D^{m}f\|^{k}\|f\|^{m-k}, \quad 0\leq k \leq m.
$$
Without loss of generality, let us assume that $\|f\|=1.$ Thus,
$$ \left\|D^{k}f\right\|^{1/k} \leq
(\pi/2)^{1/km}\|D^{m}f\|^{1/m}, \quad 0\leq k \leq m.
$$
Let $k$ be arbitrary but fixed. It follows that
$$\left\|D^{k}f\right\|^{1/k} \leq
(\pi/2)^{1/km}\|D^{m}f\|^{1/m}, \mbox{ for all } m\geq k,$$ which
implies that
$$\left\|D^{k}f\right\|^{1/k}\leq \underline{\lim}_{n\rightarrow
\infty}\|D^{m}f\|^{1/m}.$$ But \rm since this inequality is true for
all $k>0,$ we obtain that
$$\overline{\lim}_{k\rightarrow\infty}\|D^{k}f\|^{1/k}\leq
\underline{\lim}_{m\rightarrow \infty}\|D^{m}f\|^{1/m},$$ which
proves that $d_f=\lim_{k\rightarrow}\|D^{k}f\|^{1/k}$ exists.
Since $f\in \mathbb{B}_{\sigma}(D) $ the constant $\sigma_{f}$ is finite and we have 
$$
\|D^{k}f\|^{1/k}\leq \sigma_f  \|f\|^{1/k},
$$
 and by taking the
limit as $k\rightarrow\infty$ we obtain $d_f\leq \sigma_f.$ To show
that $d_f= \sigma_f,$ let us assume that $d_f< \sigma_f.$
Therefore, there exist $M>0$ and $\sigma $ such that $0<d_f<\sigma
< \sigma_f$ and $$\|D^k f\|\leq M \sigma^k, \quad \mbox{for all }
k>0 .$$ Thus, by Lemma  \ref{Lem-0} we have $f\in \mathbb{B}_\sigma(D) ,$
which is a contradiction to the definition of $\sigma_f.$

Conversely,  suppose that $d_f=\lim_{k\rightarrow \infty} \|D^k
f\|^{1/k}$ exists and is finite. Therefore, there exist $M>0$ and
$\sigma
>0$ such that $d_f<\sigma $ and
$$\|D^k f\|\leq M \sigma^k, \quad \mbox{for all } k>0 ,$$
which, in view of Lemma \ref{Lem-0}, implies that $f\in
\mathbb{B}_{\sigma}(D) 
.$ Now by repeating the argument in the first part of
the proof we obtain $d_f=\sigma_f ,$ where $\sigma_f=\inf\left\{
\sigma : f\in \mathbb{B}_{\sigma}(D)\right\}.$

Theorem \ref{new} is proven. 
\end{proof}

\section{Appendix 3}\label{App3}

The Appendix contains some comments on the proof of Theorem \ref{PWproprties}.
To prove the first item of it we proceed as follows. 
If  $f \in PW_{\sigma}\left(L_{\mathcal{M}}^{1/2}\right)$ then one has for $\mathcal{F}f$:
\begin{equation}\label{BBB}
\| L_{\mathcal{M}}^{s/2}f\|_{X^{2}}= \left(\int ^{\infty}_{0}\lambda
^{2s}\|\mathcal{F}f(\lambda)\|^{2}_{H(\lambda)}dm(\lambda
)\right)^{1/2}=
$$
$$
\left(\int^{\sigma}_{0}\lambda^{2k}\|\mathcal{F}f(\lambda
)\|^{2}_{H(\lambda)}dm(\lambda)\right)^{1/2} \leq \sigma
^{s}\|f\|_{X^{2}} , \>\>\>k\in \mathbb{R}_{+} , 
\end{equation}
 which gives Bernstein inequality for $f$.
Conversely, if $f$ satisfies Bernstein inequality then $x=\mathcal{F}f$ satisfies
$\|x\|^{2}_{H_{k}} \leq
\sigma^{2k}\|x\|_{H}^{2}.$ Suppose that there exists a set $\mu
\subset [ 0, \infty ]\setminus [ 0,
\mu]$ whose $m $-measure is not zero and $x|_{\mu }$ is not zero almost everywhere. We can
assume
that $\mu \subset
[\sigma +\epsilon , \infty )$ for some $\epsilon >0.$ Then for any $k\in
\mathbb{N}$ we have

$$ \int _{\mu }\|x(\lambda )\|^{2}_{H(\lambda)}dm (\lambda ) \leq \int
^{\infty }_{\sigma +\epsilon}\lambda ^{-2k}\|
\lambda ^{k}x(\lambda)\|^{2}_{ H(\lambda)}dm(\lambda)
\leq\|x\|^{2}_{H}\left(\frac{\sigma }{\sigma
+\epsilon} \right)^{2k}.
$$ 
Since $k$ here can be arbitrarily large, it implies  that either $x(\lambda)$ is zero on
$\mu $ or $\mu $ has measure
zero.  

\bigskip

To prove the second item of Theorem \ref{PWproprties}  one has to show  that the above Bernstein inequality (\ref{BBB}) implies that every function
\begin{equation}\label{last}
 t \mapsto
\langle e^{it\sqrt{L_{\mathcal{M}}}}f,g \rangle_{X^{2}}, \>\> \>t\in \mathbb{R}, \>\>f, g\in X^{2},
\end{equation}
is  a regular Bernstein function in ${\bf B}_{\sigma}^{\infty}(\mathbb{R})$ which is bounded on the real line. This fact implies its properties described in the second item.

\bigskip

To prove the third item of Theorem \ref{PWproprties} we use the fact that the function (\ref{last}) belongs to the regular space ${\bf B}_{\sigma}^{\infty}(\mathbb{R})$ and apply to it the classical Riesz-Boas interpolation formula (see \cite{Pes14}).

\section*{Acknowledgements}
I  thank the anonymous referees for their valuable and constructive comments, which helped improve the paper.

The author states that there is no conflict of interest.

\bibliographystyle{amsalpha}

\end{document}